\documentclass[11pt]{amsart}

\usepackage{amsmath}%
\usepackage{amsfonts}%
\usepackage{amssymb}%
\usepackage{color}
\usepackage{graphicx}
\usepackage{setspace}
\usepackage{enumerate}
\usepackage{mathrsfs}
\usepackage{hyperref}
\usepackage{float}
\allowdisplaybreaks

%
%

\title[The TST in Carnot Groups]{The Traveling Salesman Theorem in Carnot groups}
\author{Vasilis Chousionis, Sean Li, and Scott Zimmerman}

\address{V.\ Chousionis: Department of Mathematics, University of Connecticut, 
	341 Mansfield Road U1009, Storrs, Connecticut 06269, USA, {\tt vasileios.chousionis@uconn.edu}}

\address{S.\ Li: Department of Mathematics, University of Connecticut, 
	341 Mansfield Road U1009, Storrs, Connecticut 06269, USA, {\tt sean.li@uconn.edu}}

\address{S.\ Zimmerman: Department of Mathematics, University of Connecticut, 
	341 Mansfield Road U1009, Storrs, Connecticut 06269, USA, {\tt scott.zimmerman@uconn.edu}}

\thanks{V.C. is supported by  the Simons Collaboration grant no.\  521845. S.L. is supported by NSF grant DMS-1600804.}

\setlength{\oddsidemargin}{1pt}
\setlength{\evensidemargin}{1pt}
\setlength{\topmargin}{1pt}       
\setlength{\textheight}{650pt}    
\setlength{\textwidth}{460pt}     

\belowdisplayskip=18pt plus 6pt minus 12pt \abovedisplayskip=18pt
plus 6pt minus 12pt
\parskip 8pt plus 1pt





\makeatletter
\newtheorem*{rep@theorem}{\rep@title}
\newcommand{\newreptheorem}[2]{%
\newenvironment{rep#1}[1]{%
 \def\rep@title{#2 \ref{##1}}%
 \begin{rep@theorem}}%
 {\end{rep@theorem}}}
\makeatother

\newtheorem{theorem}{Theorem}
\newreptheorem{theorem}{Theorem}
\newtheorem{lemma}[theorem]{Lemma}
\newtheorem{corollary}[theorem]{Corollary}
\newtheorem{proposition}[theorem]{Proposition}


\def\diam{{\rm diam\,}}


\theoremstyle{definition}

\newcommand{\barint}{
\rule[.036in]{.12in}{.009in}\kern-.16in \displaystyle\int }

\newcommand{\barcal}{\mbox{$ \rule[.036in]{.11in}{.007in}\kern-.128in\int $}}

\newcommand{\G}{\mathbb G}
\newcommand{\ra}{\rightarrow}
\newcommand{\ha}{\mathcal{H}}
\newcommand{\stm}{\setminus}
\newcommand{\ve}{\varepsilon}
\newcommand{\Z}{\mathbb{Z}}

\newcommand{\Heis}{\mathbb H}
\newcommand{\R}{\mathbb R}

\def\diam{\operatorname{diam}}


\def\mvint_#1{\mathchoice
          {\mathop{\vrule width 6pt height 3 pt depth -2.5pt
                  \kern -8pt \intop}\nolimits_{\kern -3pt #1}}%
          {\mathop{\vrule width 5pt height 3 pt depth -2.6pt
                  \kern -6pt \intop}\nolimits_{#1}}%
          {\mathop{\vrule width 5pt height 3 pt depth -2.6pt
                  \kern -6pt \intop}\nolimits_{#1}}%
          {\mathop{\vrule width 5pt height 3 pt depth -2.6pt
                  \kern -6pt \intop}\nolimits_{#1}}}


\numberwithin{theorem}{section} \numberwithin{equation}{section}


\begin{document}
\begin{abstract}
Let $\G$ be any Carnot group. We prove that, if a subset of $\G$  is contained in a rectifiable curve, then it satisfies Peter Jones' geometric lemma with some natural modifications. We thus prove one direction of the Traveling Salesman Theorem in $\G$. Our proof depends on new Alexandrov-type curvature inequalities for the Hebisch-Sikora metrics. We also apply the geometric lemma to prove that, in every Carnot group, there exist $-1$-homogeneous Calder\'on-Zygmund kernels  such that, if a set $E \subset \G$ is contained in a 1-regular curve, then the corresponding singular integral operators are bounded in $L^2(E)$. In contrast to the Euclidean setting, these kernels are nonnegative and symmetric.
\end{abstract}

\maketitle

\section{Introduction}

Let $X$ be a metric space. A set $\Gamma \subset X$ is called a {\em rectfiable curve} if it is the Lipschitz image of a finite interval. The Analyst's Traveling Salesman Problem asks the following: given a set $E \subset X$, is there a finite length rectifiable curve $\Gamma \subset X$ so that $E \subseteq \Gamma$?  This would mean that it is possible to visit the set $E$ in finite time.  In the case when such curves $\Gamma$ exist, 
one can also ask for the smallest length of $\Gamma$. 

When $X = \R^2$, Jones gave a complete answer to the first question using the notion of {\em $\beta$-numbers} \cite{JonesTSP}.  For $E \subset X$,  $x \in \R^2$, and $r > 0$ we define
\begin{align*}
  \beta_E(x,r) := \inf_L \sup_{z \in B(x,r) \cap E} \frac{d(z,L)}{r},
\end{align*}
where the infimum is taken over the set of all affine lines $L$.  Thus, $\beta_E(x,r)$ is a scale-invariant measure of how close the set $E$ lies to some line.  He also developed upper and lower bounds for the infimal length of rectifiable curves containing $E$ by using these $\beta$-numbers.  Okikiolu later generalized his result to Euclidean spaces of all dimensions \cite{Okikiolu}.  The following theorem is now known as the Traveling Salesman Theorem:
\begin{theorem}[Euclidean Traveling Salesman Theorem (TST) \cite{JonesTSP,Okikiolu}]
\label{etst}
  Let $E \subset \R^n$.  Then $E$ lies on a finite length rectifiable curve if and only if 
  \begin{align} \label{carleson}
    \gamma(E) := \diam(E) + \int_{\R^n} \int_0^\infty \beta_E(x,r)^2 ~\frac{dr}{r^n} dx < \infty.
  \end{align}
  Furthermore, if $\gamma(E) < \infty$, then we have an estimate on the infimal length of such curves:
  \begin{align*}
    \frac{1}{C} \gamma(E) \leq \inf_{\Gamma \supset E} \ell(\Gamma) \leq C \gamma(E),
  \end{align*}
  where $C$ is some constant depending only on $n$.
\end{theorem}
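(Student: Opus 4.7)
The plan is to prove the two inequalities separately, both by multi-scale dyadic constructions.

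For the \emph{lower bound} $\gamma(E) \lesssim \inf_{\Gamma \supset E} \ell(\Gamma)$: fix a rectifiable $\Gamma \supset E$ of length $L$. The key geometric input is a ``Pythagorean excess'' estimate: whenever a curve traverses a ball $B(x,r)$ in which its image deviates by $\beta r$ from the best approximating line, the length of the curve inside $B(x,r)$ exceeds the straight-line chord by at least $c \beta^2 r$. I would discretize via a Whitney-type dyadic decomposition of $\R^n$, assigning to each cube $Q$ a ball $B_Q$ of comparable radius, and charge each quantity $\beta_E(B_Q)^2 \cdot r(Q)$ against the $\Gamma$-excess length inside a fattening of $B_Q$. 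Summing over all cubes, using bounded overlap of the fattened family together with the standard equivalence between the discrete sum $\sum_Q \beta_E(B_Q)^2 r(Q)$ and the Carleson integral defining $\gamma(E)$, would give $\gamma(E) \lesssim L + \diam(E)$.

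For the \emph{upper bound} $\inf_{\Gamma \supset E} \ell(\Gamma) \lesssim \gamma(E)$: I would carry out Jones' explicit construction. For each $k \in \Z$, pick a maximal $2^{-k}$-separated subset $X_k \subseteq E$, and inductively build polygonal curves $\Gamma_k \supset X_k$ by starting from $\Gamma_{k-1}$ and, in each cube $Q$ of scale $2^{-k}$ containing newly added points of $X_k$, splicing in a local detour along a line $L_Q$ realizing the infimum in the definition of $\beta_E(B_Q)$. An elementary geometric computation bounds the extra length incurred in $Q$ by $O(\beta_E(B_Q)^2 \cdot r(Q))$. Telescoping yields $\ell(\Gamma_k) \lesssim \diam(E) + \sum_Q \beta_E(B_Q)^2 r(Q) \lesssim \gamma(E)$ uniformly in $k$, and an Arzel\`a--Ascoli argument then extracts a limiting Lipschitz curve $\Gamma \supset E$ with $\ell(\Gamma) \lesssim \gamma(E)$.

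The main obstacle is the sufficiency direction: the splicing must be arranged so that each cube $Q$ is charged only once by its $\beta_E(B_Q)^2$-contribution while still ensuring every point of $E$ eventually lies on $\Gamma$. Stopping-time arguments around cubes where $\beta_E$ is large (so that genuine detours are forced) together with careful compatibility between the scale-$k$ detours and the scale-$(k-1)$ structure of the curve are required both to prevent double-counting of length and to guarantee that the limiting curve is an honest injective Lipschitz image rather than a self-overlapping or wildly oscillatory object.
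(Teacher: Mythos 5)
The paper does not prove Theorem~\ref{etst}; it is quoted as background from \cite{JonesTSP,Okikiolu}, so there is no internal argument to compare against. Your sketch does track the standard Jones--Okikiolu strategy, with the two directions handled separately. The necessity direction is right in outline but hides the main difficulty: one cannot apply the ``Pythagorean excess'' inequality to a raw ball $B(x,r)$, because the triple (or quadruple) of points whose triangle-inequality excess is being measured must be produced by a multiscale filtration of the curve into arcs, and the $\beta$-number at a given scale must be bounded by the deviation of a specific arc from its chord before the excess can be charged. This is exactly the machinery the present paper sets up in Sections~3 and~4 to push the theorem to Carnot groups: the four-point curvature inequality in Theorem~\ref{Goal} is the Carnot analogue of your Pythagorean-excess estimate, and Lemma~\ref{NewLem} together with the filtration/prefiltration formalism replaces your informal ``bounded overlap of fattened balls'' step. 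If you wanted to actually write the Euclidean proof you would have to supply that filtration argument, which is not an afterthought.

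For the sufficiency direction your description of Jones's farthest-insertion construction is at the right level of generality, and you correctly flag the non-double-counting and compatibility-between-scales issues as the hard part. One small correction: a rectifiable curve in this theorem is a Lipschitz image of an interval and need not be injective, so there is no need to worry about the limiting curve being an ``honest injective'' image; what must be controlled is only that the lengths $\ell(\Gamma_k)$ stay uniformly bounded by $C\gamma(E)$ (your telescoping estimate) so that Arzel\`a--Ascoli applies, and that the limit contains $E$, which follows because $X_k$ becomes dense in $E$.
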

It is well known from Rademacher's theorem that rectifiable curves in $\mathbb{R}^n$ infinitesimally resemble lines. 
However, to answer questions about the boundedness of singular integrals and other problems of a global nature, 
Rademacher's theorem does not provide enough quantitative information. 
Stated informally, one would like to know that rectifiable curves admit good affine approximations ``at most places and scales". 
This is typically quantified via integrals over space and scale as in \eqref{carleson}. 
Such {\em Carleson integrals} convey the right quantitative information required for the study of certain well known singular integrals. 
Jones was the first to realize this connection \cite{jo1}. 
He used $\beta$-numbers to control the Cauchy singular integral on $1$-dimensional Lipschitz graphs. 
Since Jones' pioneering work, $\beta$-numbers have become crucial tools in harmonic analysis, geometric measure theory, and their connections. 
In fact, the introduction of $\beta$-numbers may be viewed as a point of departure for the theory of {\em quantitative rectifiability} which was developed in the 90's by David and Semmes \cite{DS1,DS2, DS3}. 
The study of quantitative rectifiability  led to a rich geometric framework for singular integrals acting on lower dimensional subsets of $\R^n$. For more information, we refer the reader to the books \cite{DS2,pajotbook,tolsabook}

There have been numerous generalizations and variants of Theorem~\ref{etst} beyond Euclidean spaces. Schul \cite{SchulTSP} extended Theorem \ref{etst} to Hilbert spaces, David and Schul \cite{DavidSchul} recently considered the theorem in the graph inverse limits of Cheeger-Kleiner, and Hahlomaa and Schul (independently)\cite{Hah05, Hah07,Sch07} obtained variants of Theorem \ref{etst} in general metric spaces. In the last case, however, there is no natural notion of lines over which one may infimize in the definition of $\beta$,
so curvature-type quantities other than $\beta$-numbers must be considered. 

A natural class of metric spaces in which to study the Analyst's Traveling Salesman Problem (TSP) is the class of Carnot groups
(introduced in more detail in Section~\ref{CarnotSec}).  
This is a special subclass of nilpotent Lie groups whose abelian members are precisely the Euclidean spaces.  
Thus, these groups can be viewed as nonabelian generalizations of Euclidean spaces. Moreover, Carnot groups are locally compact geodesic spaces which admit dilations, and they are isometrically homogeneous. In fact, by a recent observation of Le Donne \cite{ledo}, Carnot groups are  the {\em only} metric spaces with these properties. 
Developing aspects of quantitative rectifiability (such as the TST) in Carnot groups contributes to the systematic effort which started about 15 years
ago to develop Geometric Measure Theory (GMT) on these sub-Riemannian spaces. Rather than providing a long list of highlights in sub-Riemannian GMT, 
we refer the reader to the recent lecture notes of Serra Cassano \cite{scnotes} which provide a nice overview of the field with ample references to the continuously growing literature. 

Like Euclidean spaces, Carnot groups are Ahlfors regular and contain a rich family of lines 
(which are cosets of 1-dimensional subgroups isometric to $\R$).
These are the so-called {\em horizontal lines}.
Hence the definition of $\beta$-numbers readily generalizes in this case.  
Indeed, in the definition of $\beta_E(x,r)$, we instead take the infimum $\inf_L$ 
over all horizontal lines that intersect $B(x,r)$, 
and we use the sub-Riemannian metric to measure distance. Ferrari, Franchi and Pajot \cite{FFP} initialized the study of the TSP in the simplest nonabelian Carnot group; the Heisenberg group $\Heis$. They proved that, if the Carleson integral of $\beta_E^{2}$ is bounded, then $E$ lies on a rectifiable curve. Schul and the second named  author \cite{LiSchul,LiSchul2}  improved the aforementioned result, and they obtained an almost sharp Traveling Salesman Theorem in $\Heis$:
\begin{theorem}[\cite{LiSchul}] \label{t:H-TSP1}
  There exists a universal constant $C > 0$ so that if $\Gamma \subset \Heis$ is a finite length rectifiable curve, then
  \begin{align*}
    \diam(\Gamma) + \int_\Heis \int_0^\infty \beta_\Gamma(x,r)^4 ~\frac{dr}{r^4} ~dx \leq C \ell(\Gamma).
  \end{align*}
\end{theorem}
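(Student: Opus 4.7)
The plan is to adapt the multi-scale strategy of Jones and Okikiolu to $\Heis$, with the Euclidean Pythagorean length estimate replaced by a Heisenberg-specific inequality whose exponent is $4$ rather than $2$.

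First I would discretize space and scale via a Christ-type dyadic system $\Delta$ on the Ahlfors $4$-regular group $\Heis$. Standard Fubini arguments convert the Carleson integral into a sum
$$\int_\Heis \int_0^\infty \beta_\Gamma(x,r)^4 \, \frac{dr}{r^4}\, dx \;\lesssim\; \sum_{Q \in \Delta} \beta_\Gamma(C_0 Q)^4 \, \diam(Q),$$
for some fixed inflation factor $C_0 > 1$. It is this sum that I would bound by $C\,\ell(\Gamma)$.

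The core geometric input is a \emph{fourth-power excess length lemma}: there is a constant $c > 0$ so that whenever $\sigma \colon [0,T] \to \Heis$ is a $1$-Lipschitz curve with image in $B(x,r)$ and with some point at distance $\delta r$ from every horizontal line through $B(x,r)$, one has $T - d_\Heis(\sigma(0), \sigma(T)) \geq c\, \delta^4 \, r$. I would prove this in exponential coordinates, using the isoperimetric-type identity relating the vertical coordinate of a horizontal curve to the signed area enclosed by its horizontal projection. The extremal configuration is a vertical detour of height $(\delta r)^2$ from the best approximating line; a direct Kor\'anyi-norm computation shows that the cheapest concatenation of shortest horizontal paths through such a detour exceeds $d_\Heis(\sigma(0),\sigma(T))$ by $\sim \delta^4 r$, and this is where the exponent $4$ enters and is sharp.

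The main body of the argument is then a multi-scale bookkeeping step. Scale by scale, I would build polygonal horizontal approximations $\Gamma_k$ of $\Gamma$: take $\Gamma_0$ to be a single horizontal segment comparable to $\diam(\Gamma)$, and let $\Gamma_{k+1}$ refine $\Gamma_k$ on each generation-$(k+1)$ cube $Q$ by replacing the relevant sub-segment with the horizontal line that nearly achieves the infimum in $\beta_\Gamma(C_0 Q)$. Applying the excess length lemma on each child cube gives
$$\ell(\Gamma_{k+1}) - \ell(\Gamma_k) \;\geq\; c \sum_{Q \in \Delta_{k+1}} \beta_\Gamma(C_0 Q)^4 \, \diam(Q) \;-\; E_{k+1},$$
where $E_{k+1}$ is the total length of correctors inserted to concatenate the refined pieces. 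A separate argument bounds $\ell(\Gamma_k) \leq C\,\ell(\Gamma)$ uniformly in $k$, and telescoping over $k$ produces the desired inequality in the limit.

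The hardest part will be this last step. Because $\Heis$ is non-abelian, consecutive horizontal segments built from independently-chosen best lines do not concatenate on their own, and the correctors $E_{k+1}$ are not themselves horizontal. Controlling these correctors, and bounding $\ell(\Gamma_k)$ uniformly, requires spending part of the fourth-power excess gained at parent scales as a budget to absorb the concatenation errors at child scales. It is precisely the interplay between the exponent $4$ and the non-abelian structure of $\Heis$ that makes the argument close, and that also explains why the Euclidean exponent $2$ is not available in this setting.
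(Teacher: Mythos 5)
Your plan has two genuine gaps. The first is the missing ``flat ball'' case. Your excess-length lemma applies to a single $1$-Lipschitz arc $\sigma$ in $B(x,r)$, but $\beta_\Gamma(x,r)$ measures the deviation of the full set $\Gamma \cap B(x,r)$ from a line, and $\Gamma$ may pass through the ball in many disjoint arcs, each individually nearly straight while their union lies far from any single horizontal line. In that regime your lemma extracts no excess length from any arc, and the contribution of such balls to the Carleson sum is left uncontrolled. The proof in the paper (following \cite{LiSchul}) splits the balls into $\mathcal{G}_1$, where some arc in $\Lambda'(Q(B))$ already has $\beta(\tau)\gtrsim\beta_\Gamma(B)$ and a curvature bound on that arc suffices, and a flat family $\mathcal{G}_2$ handled by a completely separate argument (Proposition~\ref{TSPdiscreteFlat}, built on Lemma~\ref{tangent} and the structure of the parameterization). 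Your sketch has no analogue of the $\mathcal{G}_2$ estimate, and without it the proof does not close.

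The second gap is the one you flag yourself: the correctors $E_{k+1}$. Here the paper's route is structurally different and sidesteps the problem. It never builds an approximating curve $\Gamma_k$. The core estimate is the four-point Alexandrov-type inequality of Theorem~\ref{Goal} (specialized to step $2$, hence exponent $4$, via Lemma~\ref{BoundLem2}), applied directly to quadruples of arc endpoints drawn from a filtration of $\gamma$ to obtain inequality~\eqref{NewLem}; the right-hand side is a telescoping difference of the monotone sums $\sum_{\tau\in\mathcal{F}_k} d(\gamma(a(\tau)),\gamma(b(\tau)))$, which are already bounded by $\ell(\gamma)$. Since no curve is assembled there are no correctors to budget for, so the non-abelian concatenation obstruction you describe never arises. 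You have correctly identified a real difficulty with a farthest-insertion scheme in $\Heis$, but you leave its resolution as the ``hardest part,'' while the paper's approach shows the difficulty is avoidable rather than essential. Note also that your excess-length statement is supported only by an extremal heuristic; its rigorous analogue is Theorem~\ref{Goal}, whose proof via the Hebisch--Sikora metric estimates (Lemmas~\ref{NH}, \ref{height}, and \ref{TI}) occupies most of Section~\ref{curvatureSec} and is the technical heart of the argument.
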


\begin{theorem}[\cite{LiSchul2}] \label{t:H-TSP2}
  For any $p < 4$, there exists $C(p) > 0$ so that for any $E \subset \Heis$ for which
  \begin{align*}
    \gamma_p(E) := \diam(E) + \int_\Heis \int_0^\infty \beta_E(x,r)^p ~\frac{dr}{r^4} ~dx < \infty,
  \end{align*}
  then there is a finite length rectifiable curve $\Gamma$ that contains $E$ and
  \begin{align*}
    \ell(\Gamma) \leq C_p \gamma_p(E).
  \end{align*}
\end{theorem}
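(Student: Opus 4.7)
The plan is to imitate Jones' Euclidean multiresolution argument, replacing affine line segments by horizontal geodesics and paying a Heisenberg-specific price that forces the exponent to drop strictly below the homogeneous dimension $4$.

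First I would reduce to the case $\diam(E) \leq 1$ via left-translation and a Heisenberg dilation. This is legitimate because $\ell(\Gamma)$, $\diam(E)$, and $\gamma_p(E)$ transform compatibly: the measure $r^{-4}\,dr\,dx$ is dilation-invariant on $\Heis \times (0,\infty)$ since $4$ is the homogeneous dimension, and $\beta_E$ is scale-invariant. One may further assume $E$ is closed and finite, recovering the general case by a compactness argument at the end. Next comes the multiresolution construction: fix a large constant $A>1$ and for each $n \geq 0$ pick a maximal $A^{-n}$-separated subset $X_n \subset E$ with $X_n \subset X_{n+1}$. The goal is to build inductively a finite horizontal graph $\Gamma_n \subset \Heis$ whose vertex set contains $X_n$, such that $\ell(\Gamma_0) \lesssim 1$ and
\[
  \ell(\Gamma_{n+1}) - \ell(\Gamma_n) \;\lesssim\; \sum_{x \in X_n} A^{-n}\, \beta_E(x, C A^{-n})^p.
\]
Telescoping, and converting the sum over $n$ and $x \in X_n$ into the Carleson integral (using that $X_n$ is $A^{-n}$-separated and $\beta_E$ is roughly constant over balls of radius $A^{-n}$), would yield $\sup_n \ell(\Gamma_n) \lesssim \gamma_p(E)$; an Arzel\`a--Ascoli argument on unit-speed parametrizations then produces a rectifiable $\Gamma \supset E$.

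The crux is the per-scale length increment, which is where Heisenberg geometry enters. When splicing in a new vertex $y \in X_{n+1}\setminus X_n$ near a point $x \in X_n$, the number $\beta_E(x, C A^{-n}) = \beta$ furnishes a horizontal line $L$ approximating $E$ in $B(x, C A^{-n})$. The key local lemma is: if two points $p, q \in \Heis$ lie within horizontal distance $\beta r$ of a common horizontal line and $d(p,q) \leq r$, then one can connect them by a horizontal curve of length at most $r(1 + C\beta^{\alpha})$ for some exponent $\alpha < 2$ (compare $\alpha = 2$ in the Euclidean case, which produced the exponent $2$ in Jones' theorem). The drop $\alpha < 2$ reflects the quadratic vertical component of the Heisenberg metric: closing up a horizontal curve near a line requires a detour whose length is controlled by a fractional power of the vertical displacement. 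This degradation transfers by summation into the strict inequality $p < 4$ for the Carleson integral.

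The main obstacle will be to control how splicing at scale $n$ affects geometry at finer scales. In the Euclidean case this is standard bookkeeping, but in $\Heis$ two horizontal segments with nearby endpoints may accumulate large vertical discrepancy, so one must verify that $\Gamma_{n+1}$ remains close to horizontal approximants at all smaller balls — otherwise the inductive hypothesis fails and the bound on $\beta$-numbers at finer scales is wasted. I would handle this through a stopping-time argument on scales where $\beta_E$ is large, combined with a careful choice of geodesic splicers whose vertical corrections telescope across scales. The strict inequality $p < 4$ is precisely what guarantees the resulting double sum converges; at the borderline $p = 4$ the fractional loss in the Heisenberg length estimate produces a logarithmic divergence, which is why Theorem~\ref{t:H-TSP1} cannot be reversed without modification.
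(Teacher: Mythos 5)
This statement is quoted from \cite{LiSchul2}; the present paper does not prove it (it only proves the necessity direction, Theorems \ref{TSP} and \ref{TSP2}). So there is no ``paper's own proof'' here to compare against, but your proposal can still be assessed against what a Jones-style construction actually requires, and against the known structure of the \cite{LiSchul2} argument.

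The overall architecture you describe --- normalize by dilations, build nested nets $X_n$, splice a horizontal graph $\Gamma_n$ scale by scale, bound the length increment by a local flatness lemma, and telescope into the Carleson sum --- is the right skeleton and matches the spirit of both Jones' original argument and \cite{LiSchul2}. The problem is the exponent bookkeeping in your key local lemma, and it is not a cosmetic issue. You claim the per-splice excess is $\lesssim r\beta^\alpha$ for some $\alpha<2$, comparing to $\alpha=2$ in Euclidean space. But in Jones' argument the local excess $\beta^2 r$ is exactly what produces the Carleson exponent $2$: summing $\beta^\alpha r$ over a multiresolution gives $\ell(\Gamma)\lesssim \diam(E)+\gamma_\alpha(E)$. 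With a fixed $\alpha<2$ this yields $\ell(\Gamma)\lesssim\gamma_\alpha(E)$, which is a genuinely \emph{different} statement from the theorem: since $\beta\le 1$, for $\alpha<p<4$ one has $\gamma_\alpha(E)\ge\gamma_p(E)$, and the hypothesis $\gamma_p(E)<\infty$ for a single $p\in(\alpha,4)$ gives no control at all on $\gamma_\alpha(E)$. To prove the stated theorem your local lemma must produce excess $\lesssim\beta^p r$ for $p$ arbitrarily close to $4$, i.e.\ the exponent must be near $4$, not near $2$. This is consistent with the necessity direction (Theorem~\ref{t:H-TSP1}, and Theorem~\ref{TSP2} with $r=2$): the Heisenberg curvature inequality shows the excess $\Delta$ is comparable to $\beta^4 r$, not $\beta^2 r$, so $4$ is the critical power.

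Beyond the numerology, you correctly identify that propagating the non-horizontal error across scales is the real obstacle and the source of the restriction $p<4$, but the sketch (``stopping-time argument\ldots vertical corrections telescope'') does not actually engage with it. In \cite{LiSchul2} the $\epsilon$-loss in the exponent does not arise from a single pointwise length estimate for one splice; it comes from comparing $\beta_E$ to auxiliary flatness-type quantities attached to the curve under construction, across scales, and the construction is organized around those modified quantities rather than $\beta_E$ directly. So, as written, the proposal has the right outline but both the key lemma's exponent and the mechanism controlling the cross-scale accumulation of vertical error are gaps, and the first of these would in fact prove a statement inconsistent with the theorem's hypothesis.
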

It is currently unknown whether Theorem \ref{t:H-TSP2} holds for $p = 4$. This would give a sharp converse to Theorem \ref{t:H-TSP1}.  Note that the 4 in the exponent of $\frac{dr}{r^4}$ is an obvious modification resulting from the Hausdorff dimension of the Heisenberg group. However, the exponent 4 of $\beta_E$ in Theorem \ref{t:H-TSP1} is a consequence of an Alexandrov-type curvature inequality in $\Heis$ whose rather delicate proof depends crucially upon the Koran\'yi metric in $\Heis$. Note, however, that Theorem~\ref{t:H-TSP1} holds for any homogeneous metric in $\Heis$ including the sub-Riemannian metric.

We cannot use the Koran\'yi metric in the general setting since it does not generalize to arbitrary Carnot groups.  Instead, we use another family of metrics -- the Hebisch-Sikora metrics \cite{HebSik} -- for which we will establish a similar curvature inequality (Theorem~\ref{Goal}).  With the new curvature inequality, we can then use the proof of \cite{LiSchul} to obtain the following theorem  which holds for all homogenous metrics in any Carnot group $\G$.


\begin{theorem}
\label{TSP}
Let $\mathbb{G}$ be a step $r$ Carnot group with Hausdorff dimension $Q$.  There is a constant $C=C(\mathbb{G})>0$
such that, for any rectifiable curve $\Gamma \subset \mathbb{G}$, we have
$$
\int_{\mathbb{G}} \int_0^{\infty} \beta_{\Gamma}(B(x,t))^{2r^2} \frac{dt}{t^Q} d \mathcal{H}^Q(x) \leq C \mathcal{H}^1(\Gamma).
$$
\end{theorem}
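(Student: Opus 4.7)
The plan is to mimic the strategy of Li and Schul \cite{LiSchul} that proves Theorem \ref{t:H-TSP1}, with two substitutions: the Kor\'anyi metric on $\Heis$ is replaced by a Hebisch-Sikora metric on $\mathbb{G}$, and the Heisenberg-specific curvature inequality is replaced by the new Alexandrov-type inequality for the Hebisch-Sikora metric (Theorem \ref{Goal}). Since any two homogeneous metrics on $\mathbb{G}$ are bi-Lipschitz equivalent, both $\beta_\Gamma(B(x,t))$ and $\mathcal{H}^1(\Gamma)$ are only affected by multiplicative constants under such a change, so it suffices to prove the desired bound for a single fixed Hebisch-Sikora metric $d$.

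The local content of the argument is a chord-excess lower bound. Fix a ball $B = B(x,t)$ meeting $\Gamma$ and let $L$ be a horizontal line that nearly attains the infimum in the definition of $\beta_\Gamma(B)$. By selecting three points $y_1, y_2, y_3 \in \Gamma \cap B$ whose projections to $L$ are well-spread and for which $\max_i d(y_i, L) \gtrsim \beta_\Gamma(B) \cdot t$, Theorem \ref{Goal} should yield the chord inequality
$$
d(y_1, y_2) + d(y_2, y_3) - d(y_1, y_3) \;\gtrsim\; \beta_\Gamma(B)^{2r^2} \cdot t.
$$
The step-dependent exponent $2r^2$ is intrinsic to the Hebisch-Sikora curvature estimate and is precisely what propagates to the exponent on the $\beta$-number in the statement. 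The left-hand side above is a pointwise lower bound on the excess of the subcurve of $\Gamma$ in a slightly enlarged $B$ over its chord.

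To globalize, one invokes Schul's multi-resolution machinery from \cite{LiSchul} essentially verbatim. Discretize scales dyadically and choose a bounded-overlap net of balls $\{B_{i,j}\}$ of radius $2^{-j}$. Using $\mathcal{H}^Q(B(x,t)) \asymp t^Q$ and the near-constancy of $\beta_\Gamma(B(x,t))$ on dyadic annuli, the double integral in the statement is comparable to
$$
\sum_{j \in \Z}\sum_i \beta_\Gamma(B_{i,j})^{2r^2} \cdot 2^{-j}.
$$
By the local chord-excess bound, each summand is controlled by the length excess of $\Gamma$ over a chord inside $B_{i,j}$. Schul's tree/martingale bookkeeping, which treats the curvature inequality as a black box and uses only Ahlfors regularity of $\mathbb{G}$ and the existence of horizontal lines, then telescopes these excesses to at most $C \, \mathcal{H}^1(\Gamma)$.

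The main obstacle is Theorem \ref{Goal} itself: the Hebisch-Sikora metrics do not admit as explicit a closed form as the Kor\'anyi norm, so isolating the correct step-$r$ exponent in the Alexandrov-type inequality constitutes the substantive new analytic work of the paper. Once Theorem \ref{Goal} is in hand, the passage to the Carleson integral is structural and follows \cite{LiSchul} with only cosmetic changes dictated by the ambient Hausdorff dimension $Q$ and the step $r$.
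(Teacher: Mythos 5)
Your high-level strategy matches the paper's: fix the Hebisch--Sikora metric (legitimate since any two homogeneous metrics on $\G$ are bi-Lipschitz equivalent), feed Theorem~\ref{Goal} into the Li--Schul multiresolution/filtration machinery in place of the Heisenberg curvature inequality, and telescope. But two of your claims gloss over real work.

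First, Theorem~\ref{Goal} is \emph{not} a three-point chord-excess bound. It is a four-point inequality: the excess $\Delta = d(a,z)+d(z,v)+d(v,w)-d(a,w)$ controls $\sup_t d(L_{av}(t),L_{aw})^{2r^2} + \sup_t d(L_{vw}(t),L_{aw})^{2r^2}$, i.e.\ distances between \emph{horizontal segments}, and it only applies when all pairwise distances among $a,z,v,w$ are comparable to $\rho$ (the lower bound $m\rho$ is a hypothesis). The paper extracts its version of your ``chord-excess'' estimate (Lemma giving \eqref{NewLem}) by applying Theorem~\ref{Goal} twice with carefully chosen quadruples drawn from the filtration $\mathcal{F}_{\tau,1}, \mathcal{F}_{\tau,2}, \mathcal{F}_{\tau,3}$, together with a non-degeneracy argument (\eqref{PBound}, \eqref{aiBound}, \eqref{hatBound}) ensuring the comparable-distance hypothesis holds. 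Your three-point phrasing hides this layer entirely and would not supply the hypotheses Theorem~\ref{Goal} requires.

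Second, and more significantly, you assert that once Theorem~\ref{Goal} is in hand the passage to the Carleson integral is ``structural'' and requires only ``cosmetic changes.'' That is not accurate. The paper splits balls into a ``non-flat'' family $\mathcal{G}_1$ (where some arc $\tau$ in $\Lambda'(Q(B))$ has $\beta(\tau)\geq 10^{-10}\beta_\Gamma(B)$, so the chord-excess argument applies) and a ``flat'' family $\mathcal{G}_2$. For $\mathcal{G}_2$ one cannot find a quadruple producing the needed excess directly. Handling $\mathcal{G}_2$ requires the Carnot analogue of \cite[Lemma~4.1]{LiSchul}, namely the estimate $\sup_{x\in L_\tau}d(x,\tau)\lesssim \beta(\tau)\diam(\tau)$, and the proof of that estimate in the paper rests on a genuinely new geometric fact (Lemma~\ref{tangent}): horizontal segments that pass close to the origin cannot be tangent to the HS unit sphere. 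Its proof is a nontrivial Arzel\`a--Ascoli compactness argument exploiting that the only horizontal segments through $0$ are Euclidean segments in $\mathbb{R}^n\times\{0\}$. Without this lemma the flat-ball half of \eqref{TSPdiscrete} has no proof. So the ``flat'' side is a substantive new ingredient, not bookkeeping, and your proposal as written omits it.
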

In the case of step 2 Carnot groups (of which the Heisenberg group is an example), 
this theorem provides a bound on the Carleson integral involving $\beta^{2 \cdot 2^2} = \beta^8$.
This is weaker than the bound on the Carleson integral of Theorem~\ref{t:H-TSP1} which involves $\beta^4$.  
We will prove in Section~\ref{Step2Sec} that, in the special case of step 2 Carnot groups, the curvature inequality can be improved so that Theorem~\ref{TSP} holds with an exponent 4 on $\beta$. Therefore we obtain a genuine generalization of Theorem~\ref{t:H-TSP1} to any step 2 Carnot group.
\begin{theorem}
\label{TSP2}
Let $\mathbb{G}$ be a step 2 Carnot group with Hausdorff dimension $Q$.  There is a constant $C=C(\mathbb{G})>0$
such that, for any rectifiable curve $\Gamma \subset \mathbb{G}$, we have
$$
\int_{\mathbb{G}} \int_0^{\infty} \beta_{\Gamma}(B(x,t))^{4} \frac{dt}{t^Q} d \mathcal{H}^Q(x) \leq C \mathcal{H}^1(\Gamma).
$$
\end{theorem}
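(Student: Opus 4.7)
The plan is to follow the architecture of the proof of Theorem~\ref{TSP} verbatim, and only replace the curvature inequality (the step-$r$ Alexandrov-type estimate for the Hebisch-Sikora metric) by a sharper version valid in step 2. Because Theorem~\ref{t:H-TSP1} is obtained in \cite{LiSchul} by plugging a $\beta^4$-type midpoint/curvature inequality for the Kor\'anyi gauge into a multiresolution scheme, it suffices to produce an analogous $\beta^4$-type inequality for the Hebisch-Sikora gauge on an arbitrary step 2 Carnot group; the remainder of the \cite{LiSchul} machinery transfers without change. The structural reason this should be possible is that in step 2 the Baker-Campbell-Hausdorff formula truncates to $\log(\exp X \exp Y)=X+Y+\tfrac12[X,Y]$, so there is only one layer of commutator corrections, exactly as in $\Heis$, and the proof of Theorem~\ref{TSP} only loses powers when iterating over deeper commutator layers.

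The central step is therefore to prove the following sharper midpoint inequality: for the Hebisch-Sikora distance $d$ on a step 2 Carnot group $\G$ and for any three points $x,y,z$ with $m$ a geodesic midpoint of $y$ and $z$, one has
$$d(x,m)^2 \leq \tfrac{1}{2} d(x,y)^2 + \tfrac{1}{2} d(x,z)^2 - \tfrac{1}{4} d(y,z)^2 + C\, d(y,z)^{4}\, \Delta(x,y,z)^{-2},$$
where $\Delta(x,y,z)$ is a lower scale such as $\max(d(x,y),d(x,z))$. To establish it I would work in exponential coordinates $(\xi,\tau)\in V_1\oplus V_2$ and exploit the explicit formula for the Hebisch-Sikora norm, which on a step 2 group can be written $N(\xi,\tau)^{2k}=|\xi|^{2k}+c|\tau|^k$ for an appropriate auxiliary exponent $k$. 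Expanding $d(x,m)^2$ using the step-2 BCH formula, the horizontal parts obey the ordinary Euclidean parallelogram identity and cancel against $\tfrac12 d(x,y)^2+\tfrac12 d(x,z)^2 - \tfrac14 d(y,z)^2$, and the remaining vertical contribution involves the single commutator term $[\xi_y-\xi_x,\xi_z-\xi_x]$. This commutator is bilinear and antisymmetric, so after choosing the midpoint geodesic to balance the two endpoints it produces an error of order $|\xi_y-\xi_z|^2$ times the Euclidean distance from $x$ to the chord, exactly as in Kor\'anyi's case in $\Heis$, yielding the displayed inequality.

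Once the sharp curvature estimate is in hand, the iteration of \cite{LiSchul} runs as in Section~\ref{Step2Sec} of the present paper: apply the inequality along a multiresolution tree of segments of $\Gamma$, square and sum over scales using a standard combinatorial argument, and integrate to recover the Carleson bound on $\beta_\Gamma^4$. The main obstacle is proving the sharp curvature inequality in the generality of an arbitrary step 2 Carnot group; unlike in $\Heis$, the center $V_2$ can have arbitrary dimension and the bracket $[\,\cdot\,,\,\cdot\,]\colon V_1\times V_1\to V_2$ is a general antisymmetric tensor rather than a single symplectic form. Consequently the vertical error is a vector, and one must choose the Hebisch-Sikora parameter $k$ and the symmetric rearrangement of vertical coordinates so that the gauge still absorbs this vector error with the optimal quartic rate. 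I expect that a careful, component-wise application of the Cauchy-Schwarz inequality on $V_2$, together with the homogeneity of the Hebisch-Sikora norm established earlier in the paper, handles this difficulty uniformly in the step 2 class.
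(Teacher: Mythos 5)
Your overall strategy—exploit the truncated step-2 BCH formula to improve the step-$r$ exponent $2r^2$ to $2r=4$, then feed the improved curvature estimate into the \cite{LiSchul} multiresolution scheme—is the right one, and it is what the paper does. But the specific route you propose does not work as stated, for two reasons.

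First, your claim that the Hebisch-Sikora norm on a step 2 group can be written $N(\xi,\tau)^{2k}=|\xi|^{2k}+c|\tau|^k$ is false. By Theorem~\ref{HSdef}, the HS norm is characterized by having a \emph{Euclidean} ball $B_{\R^N}(0,\eta)$ as its unit ball, i.e.\ $\|x\|\le 1 \iff |x_1|^2+|x_2|^2\le\eta^2$; this is never of the Kor\'anyi form $|\xi|^{2k}+c|\tau|^k\le\text{const}$ for $k>1$. The two gauges are bi-Lipschitz equivalent, but since your proposed midpoint computation is meant to produce exact cancellation against a Euclidean parallelogram identity up to a quartic error, that equivalence is not free: a bi-Lipschitz substitution destroys the claimed cancellation. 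The paper side-steps this by not proving an Alexandrov-type midpoint inequality at all. Instead, it proves the four-point segment-deviation estimate of Theorem~\ref{Goal} (for arbitrary step $r$, with exponent $2r^2$) and isolates the \emph{only} lossy step: Lemma~\ref{SegLem}, which controls the whole horizontal segment by its endpoints at the cost of an $\omega\mapsto\omega^{1/r}$ degradation. Lemma~\ref{BoundLem2} then replaces Lemma~\ref{BoundLem} in step 2 by computing $g(t)^{-1}f(t)$ explicitly via BCH $=X+Y+\tfrac12[X,Y]$ in the auxiliary gauge $d_\infty$, obtaining linear control $\sup_t d(f(t),g(t))\lesssim\omega\rho$ with no exponent loss, which gives $\beta^{2r}=\beta^4$. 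No midpoint inequality, no geodesics, and no explicit HS-norm formula are needed.

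Second, even granting some correct form of your midpoint inequality, you have not explained how to convert it into the Carleson bound. You appeal to ``the iteration of \cite{LiSchul} \ldots as in Section~\ref{Step2Sec} of the present paper,'' but Section~\ref{Step2Sec} of the paper does not iterate a midpoint inequality; it plugs a replacement lemma into the filtration machinery of Section~\ref{TSTSec}, which is built around the four-point excess $\Delta = d(a,z)+d(z,v)+d(v,w)-d(a,w)$ along arcs of the filtration, not around three-point midpoint comparisons. Your proposal also requires geodesic midpoints, but Carnot geodesics for a general homogeneous metric are not explicit and need not be horizontal segments, so expanding $d(x,m)^2$ via BCH is not available. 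You correctly flag the hard point—handling the vector-valued commutator $[\,\cdot\,,\cdot\,]\colon V_1\times V_1\to V_2$—but your resolution (``a careful, component-wise application of Cauchy-Schwarz'') is not a proof; the paper instead reduces the problem to the finite list of surviving step-2 BCH terms and bounds them by $\max\{|p_1|,|p_2|^{1/2}\}$ estimates, which is a genuinely different calculation. In short: right strategy, wrong key lemma, and the decisive computation is missing.
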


As mentioned earlier, there are deep connections between quantitative rectifiability and singular integral operators (SIO) in Euclidean spaces. In particular, the boundedness of SIOs on Lipschitz graphs (and beyond) is a classical topic developed by Calder\'on \cite{Calderon}, Coifman-McIntosh-Meyer \cite{CMM}, David \cite{MR956767}, David-Semmes \cite{DS1, DS2}, Tolsa \cite{tolsaplms}, and many others. In all of these contributions, the kernels defining the SIO are odd functions. This is very reasonable since, in order to define a SIO which makes sense on lines and other ``nice'' $1$-dimensional objects, one heavily relies on the cancellation properties of the kernel, see e.g. \cite[Proposition 1, pp 289]{stein}. Surprisingly, the situation is very different in Carnot groups, and this was first observed in the first Heiseinberg group in \cite{ChoLi}. Using Theorem \ref{TSP}, we will prove the following theorem.

\begin{theorem}
\label{siosintro}
Let $(\G,d)$ be Carnot group of step $r\geq 2$ equipped with a homogeneous metric $d$. There exists a nonnegative, symmetric, $-1$ homogeneous, Calder\'on-Zygmund kernel  $K : \G \stm \{0\} \ra (0,\infty)$ 
such that  the corresponding truncated singular integrals
$$T^\ve f\,(p)=\int_{E \stm B_{d}(p,\ve)} K(q^{-1} \cdot p) f(q) \,d \ha^1(q)$$
are uniformly bounded in $L^2(\ha^1 |_E)$ for every $1$-regular set $E$  which is contained in a $1$-regular curve.
\end{theorem}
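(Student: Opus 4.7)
The plan follows the strategy introduced for the Heisenberg group in \cite{ChoLi}: construct a nonnegative, symmetric, $-1$-homogeneous Calder\'on-Zygmund kernel whose size is controlled by the transversal distance to horizontal lines, and then convert this bound into $L^2$-boundedness using the Carleson-type control supplied by Theorem~\ref{TSP}.

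\emph{Kernel construction.} Fix a stratification $\mathfrak{g} = V_1 \oplus \cdots \oplus V_r$ with Euclidean norms $|\cdot|_k$ on each layer and write $q = q_1 + \cdots + q_r$ in exponential coordinates. The identity $q \cdot q^{-1} = 0$ together with the Baker--Campbell--Hausdorff formula yields $(q^{-1})_k = -q_k$ in each layer, so the $r$-homogeneous vertical gauge
$$N(q) := \max_{2 \leq k \leq r} |q_k|_k^{r/k}$$
(or any convenient smoothing thereof) is nonnegative, symmetric under inversion, and vanishes exactly on $V_1$. Define
$$K(q) := \frac{N(q)^{r^2}}{d(0,q)^{r^3+1}}.$$
Then $K$ is nonnegative, symmetric, and $-1$-homogeneous. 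The CZ growth bound $K(q) \lesssim d(0,q)^{-1}$ and the H\"older smoothness away from the origin follow from the equivalence of homogeneous gauges on $\G$ and the smoothness of $N$ on $\G \setminus V_1$.

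\emph{Key geometric estimate.} I claim that if $q$ lies within sub-Riemannian distance $\eta \cdot d(0,q)$ of some horizontal line $L$ through the identity, then $K(q) \lesssim \eta^{r^2}/d(0,q)$. Writing $q = p \cdot q'$ with $p \in L$ and $d(0, q') \lesssim \eta \cdot d(0,q)$, for each $k \geq 2$ BCH expresses $q_k$ as a finite sum of iterated brackets of $p \in V_1$ with the layer components of $q'$. Because $p_k = 0$ for $k \geq 2$, every such bracket contains at least one factor drawn from $q'$; the largest contribution, from $[p, [p, \cdots, [p, q_1']]]$ with $k-1$ copies of $p$, has size $\lesssim \eta \cdot d(0,q)^k$. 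Hence $|q_k|_k \lesssim \eta \cdot d(0,q)^k$ for every $k \geq 2$, giving $N(q) \lesssim \eta \cdot d(0,q)^r$ and the claimed bound on $K$.

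\emph{$L^2$-boundedness via $T(1)$ and Theorem~\ref{TSP}.} A standard restriction argument reduces the statement to proving $\|T^\ve\|_{L^2(\ha^1|_\Gamma) \to L^2(\ha^1|_\Gamma)} \leq C$ uniformly in $\ve$ for $\Gamma$ a 1-regular curve. Following the scheme of \cite{ChoLi} adapted to the present homogeneous space, apply the $T(1)$ theorem on $(\Gamma, d, \ha^1|_\Gamma)$ to reduce $L^2$-boundedness to a weak boundedness condition; this condition is in turn controlled by the Carleson-packing integral
$$\int_\Gamma \int_0^\infty \beta_\Gamma(B(p,t))^{2r^2}\,\frac{dt}{t}\,d\ha^1(p),$$
since the geometric estimate applied to the best-approximating horizontal line of $\Gamma \cap B(p, 2t)$ gives $K(q^{-1}p) \lesssim \beta_\Gamma(B(p, 2t))^{r^2}/t$ for $p, q \in \Gamma$ with $t = d(p,q)$, and pairing two copies of $K$ produces exactly the power $2r^2$. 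Finally, bound this Carleson integral by $\ha^1(\Gamma)$ through Theorem~\ref{TSP}: the comparison $\beta_\Gamma(B(p,t)) \lesssim \beta_\Gamma(B(x, 2t))$ for $x \in B(p,t)$, together with the $Q$-regularity of $\G$, the 1-regularity of $\Gamma$, and a Fubini swap, yields
$$\int_\Gamma \int_0^\infty \beta_\Gamma(B(p,t))^{2r^2}\,\frac{dt}{t}\,d\ha^1(p) \lesssim \int_\G \int_0^\infty \beta_\Gamma(B(x,t))^{2r^2}\,\frac{dt}{t^Q}\,d\ha^Q(x) \lesssim \ha^1(\Gamma).$$
The main obstacle is the first reduction: precisely packaging the weak boundedness of $T^\ve$ on a 1-regular curve into the Carleson-packing integral with exponent $2r^2$. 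This requires a careful harmonic-analytic argument, but it is exactly where the power $r^2$ on $N$ in the construction of $K$ has been tuned to match the exponent $2r^2$ of Theorem~\ref{TSP}.
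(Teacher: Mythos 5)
Your overall strategy — a nonnegative, symmetric, $-1$-homogeneous kernel vanishing to high order on horizontal elements, combined with a $T(1)$/Carleson argument driven by Theorem~\ref{TSP} — is the same as the paper's, and your BCH estimate bounding the vertical gauge of $q$ by (distance to horizontal line)$\times d(0,q)^{r}$ is the correct analogue of the paper's Lemma~\ref{kernellemma}. But the exponent in your kernel is off by a factor of two, and this is not a cosmetic difference.

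Your $N(q)$ is $r$-homogeneous, so $N(q)^{r^2}$ is $r^3$-homogeneous; equivalently, written in terms of the $1$-homogeneous gauge $N(q)^{1/r}$, your kernel is $\bigl(N(q)^{1/r}\bigr)^{r^3}\!/d(0,q)^{r^3+1}$. The paper's kernel is $d(NH(p),0)^{2r^3}/d(p,0)^{2r^3+1}$ — exponent $2r^3$ on the $1$-homogeneous vertical gauge, not $r^3$. The consequence is that a \emph{single} evaluation of the paper's kernel satisfies $K_d(q^{-1}p)\,d(p,q)\lesssim\beta^{2r^2}$, whereas yours gives only $K(q^{-1}p)\,d(p,q)\lesssim\beta^{r^2}$. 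The paper's proof (following \cite{ChoLi}) reduces \eqref{e:T_eps-bound} to the pointwise bound $T_{(j)}1(x)\lesssim\beta_E(x,2^{1-j})^{2r^2}$, i.e.\ it needs the full exponent $2r^2$ from one copy of $K$, and then sums this against the Carleson packing from Theorem~\ref{TSP}. Your proposed remedy — that "pairing two copies of $K$ produces exactly the power $2r^2$" — does not survive scrutiny: expanding $\|S_n\chi_S\|_{L^2}^2$ produces the double sum $\sum_{j,k}\int T_{(j)}\chi_S\,T_{(k)}\chi_S\,d\mu$, and while the diagonal $j=k$ indeed yields $\int\beta_j^{2r^2}$, the off-diagonal terms give $\int\beta_j^{r^2}\beta_k^{r^2}$ summed over all pairs; since the kernel is nonnegative there is no cancellation, and crude estimates (AM--GM or Cauchy--Schwarz in $j,k$) produce a logarithmically divergent factor in the number of scales. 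You correctly flag this as "the main obstacle," and it is a genuine gap, not a routine harmonic-analysis packaging step.

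The fix is simply to raise the exponent: take $K(q)=N(q)^{2r^2}/d(0,q)^{2r^3+1}$ (equivalently, use the $1$-homogeneous vertical gauge to the power $2r^3$, as in the paper's $K_d$). Then your BCH estimate immediately gives $K(q^{-1}p)\,d(p,q)\lesssim\beta^{2r^2}$ from one application, and the rest of the argument goes through exactly as in \cite{ChoLi}. A secondary point: you state your geometric estimate only for horizontal lines through the identity; the paper's Lemma~\ref{kernellemma} treats arbitrary horizontal lines, which is what is actually needed since the nearest-line footpoints $\ell_a,\ell_b$ are generic. This is a straightforward translation, but it should be recorded.
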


The paper is organized as follows.
In Section~\ref{CarnotSec}, we will introduce the basic properties of Carnot groups that will be needed for our purposes,
and we will define the Hebisch-Sikora metric used throughout the paper.
We will introduce and prove the curvature estimate Theorem~\ref{Goal} in Section~\ref{curvatureSec}.
This curvature bound will be used to prove Theorem~\ref{TSP} in Section~\ref{TSTSec}.
This section follows the example set forth in \cite{LiSchul}.
The case of step 2 groups will be addressed in Section~\ref{Step2Sec}. Finally in Section~\ref{SIOSec} we will prove Theorem \ref{siosintro}.

\section{Carnot preliminaries}
\label{CarnotSec}

A step $r$ \emph{Carnot group} is a connected, simply connected Lie group $\mathbb{G}$ 
whose Lie algebra $\mathfrak{g}$ is \emph{stratified} in the following sense:
$$
\mathfrak{g} = V_1 \oplus \cdots \oplus V_r, 
\quad [V_1,V_i] = V_{i+1}  \text{ for } i=1,\dots,r-1,
\quad [V_1,V_r] =\{0\}
$$
where $V_1,\dots,V_r$ are non-zero subspaces of the Lie algebra.
Any such Lie group may be identified with $\mathbb{R}^N$ for some $N \in \mathbb{N}$ via the exponential coordinates on $\mathfrak{g}$.
Denote by $|\cdot|$ the Euclidean norm in $\mathbb{G} = \mathbb{R}^N$.
Say $Q$ is the \emph{homogeneous dimension} of $\mathbb{G}$ 
i.e. $Q = \sum_{i=1}^r i \dim V_i$.
There is a natural family of automorphisms known as \emph{dilations} on $\mathbb{G}$.
If, for any $p \in \mathbb{G}$, we write $p = (p_1,\dots,p_r)$ where
$p_i \in \mathbb{R}^{v_i}$ for $v_i = \dim V_i$,
then for any $s>0$ define the dilation
$$
\delta_s(p) = \left( sp_1,s^{2}p_2, \dots, s^{r}p_r \right).
$$
It follows that $\{\delta_s\}_{s>0}$ is a one parameter family
i.e. $\delta_s \circ \delta_t = \delta_{st}$.
Given $p \in \mathbb{G}$, 
we will also often write $p = (p_1,p_2) \in \mathbb{R}^{v_1} \times \mathbb{R}^{N-v_1}$.
We may then think of $p_1$ as the ``horizontal part'' of $p$.
Define the \emph{non-horizontal part} of $p \in \mathbb{G}$ as 
$$
NH(p) := \tilde{\pi}(p)^{-1}p
$$
where $\tilde{\pi}:\mathbb{G} \to \mathbb{G}$ is the map $\tilde{\pi}(p_1,p_2) = (p_1,0)$ (note that this is not a projection!).

We will now endow $\mathbb{G}$ with a metric space structure.
\begin{theorem}[Hebisch and Sikora, 1990]
\label{HSdef}
There exists $\varepsilon_0 > 0$ so that, for every $\eta < \varepsilon_0$,
$$
\Vert x \Vert 
= \inf \{ t \, : \, |\delta_{1/t}(x)| < \eta \}
\quad
\text{for all } x \in \mathbb{G}
$$
is a homogeneous, subadditive norm on $\mathbb{G}$
i.e. for every $s>0$ and $x,y \in \mathbb{G}$, $\Vert \delta_s(x) \Vert = s \Vert x \Vert$
and $\Vert xy \Vert \leq \Vert x \Vert + \Vert y \Vert$.
In particular, the unit ball in $\Vert \cdot \Vert$ coincides with the Euclidean ball $B_{\mathbb{R}^N}(0,\eta)$.
\end{theorem}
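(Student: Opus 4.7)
The plan is to verify in order: homogeneity, the unit ball characterization, and subadditivity, with the third being where all the real work lies.

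Homogeneity is immediate from $\delta_s\circ\delta_t = \delta_{st}$: substituting $u = t/s$ in the defining infimum gives
\[
\|\delta_s(x)\| = \inf\{t > 0 : |\delta_{s/t}(x)| < \eta\} = s\cdot\inf\{u > 0 : |\delta_{1/u}(x)| < \eta\} = s\|x\|.
\]
For the unit ball characterization, note that for any $x \neq 0$ the map $s\mapsto|\delta_s(x)|^2 = \sum_i s^{2i}|x_i|^2$ is strictly increasing on $(0,\infty)$. In particular, $|\delta_{1/t}(x)| \leq |x|/t$ for $t \geq 1$ and $|\delta_{1/t}(x)| \geq |x|/t$ for $t \leq 1$; the first shows $|x|\leq\eta \Rightarrow \|x\|\leq 1$, and the second shows the reverse, so the unit ball of $\|\cdot\|$ coincides with $B_{\mathbb{R}^N}(0,\eta)$.

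Together with homogeneity, the unit ball characterization shows that the subadditivity $\|xy\|\leq\|x\|+\|y\|$ is equivalent to the following convexity-type property of the Euclidean ball:
\[
|\delta_\alpha(a)\cdot\delta_\beta(b)| \leq \eta \quad\text{whenever } |a|,|b|\leq\eta,\ \alpha,\beta\geq 0,\ \alpha+\beta=1.
\]
Indeed, given $x,y\neq 0$, set $\alpha = \|x\|/(\|x\|+\|y\|)$, $\beta = 1-\alpha$, $a = \delta_{1/\|x\|}(x)$, $b = \delta_{1/\|y\|}(y)$; then $|a|,|b|\leq\eta$ and $\delta_{1/(\|x\|+\|y\|)}(xy) = \delta_\alpha(a)\cdot\delta_\beta(b)$, so the displayed bound is exactly $\|xy\|\leq\|x\|+\|y\|$.

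To prove the displayed convexity property, I would invoke the Baker--Campbell--Hausdorff formula to write
\[
\delta_\alpha(a)\cdot\delta_\beta(b) = \delta_\alpha(a) + \delta_\beta(b) + R(\delta_\alpha(a),\delta_\beta(b)),
\]
where $R$ is a polynomial combination of iterated brackets, each of total degree at least two and mixing both arguments. Two applications of Cauchy--Schwarz, combined with the elementary bound $\alpha^i+\beta^i \leq \alpha+\beta=1$ for $i\geq 1$, yield
\[
|\delta_\alpha(a)+\delta_\beta(b)|^2 \leq \sum_i \bigl(\alpha^i|a_i|^2+\beta^i|b_i|^2\bigr) \leq \alpha|a|^2+\beta|b|^2 \leq \eta^2,
\]
while bilinearity of the bracket gives a crude estimate $|R(\delta_\alpha(a),\delta_\beta(b))| \leq C\eta^2$ with $C = C(\mathbb{G})$. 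The hard part -- and the technical heart of the proof -- is that these bounds in isolation only produce $|\delta_\alpha(a)\cdot\delta_\beta(b)| \leq \eta+C\eta^2$, which falls short. The needed improvement comes from observing that the two estimates are coupled: equality in the linear bound forces $a=b$ to be purely horizontal, in which case $[a_1,a_1]=0$ makes $R$ vanish identically. Quantifying this cancellation -- showing that the slack $\eta^2-|\delta_\alpha(a)+\delta_\beta(b)|^2$ dominates the BCH-induced increment once $\eta < \varepsilon_0(\mathbb{G})$ -- is the main technical obstacle.
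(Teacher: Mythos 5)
Your outline identifies the right reduction and the right structural idea, but it is not a proof: you explicitly leave the hardest step unproved. The paper itself does not prove this theorem (it is cited from Hebisch--Sikora), but its Lemma~\ref{TI} contains a quantitative strengthening of the subadditivity, and comparing your sketch against that argument makes the gap concrete.

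Homogeneity and the unit-ball characterization are fine, and your reduction of subadditivity to showing $|\delta_\alpha(a)\cdot\delta_\beta(b)|\le\eta$ for $|a|,|b|\le\eta$, $\alpha+\beta=1$ is correct (using that $\delta_s$ is an automorphism), as is the linear estimate $|\delta_\alpha(a)+\delta_\beta(b)|^2\le\alpha|a|^2+\beta|b|^2\le\eta^2$. But the coupling between the slack in that bound and the size of the BCH correction --- which you yourself call ``the main technical obstacle'' --- is the whole content of the theorem, and you describe it rather than prove it. Moreover the heuristic you offer is imprecise: near-equality in the linear bound only forces the horizontal directions $a_1/|a_1|$ and $b_1/|b_1|$ to nearly coincide; it does not force $a=b$, nor that they are purely horizontal. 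What the actual argument needs, and what your sketch does not supply, is the quantitative structure. Writing $\theta=\bigl|\tfrac{a_1}{|a_1|}-\tfrac{b_1}{|b_1|}\bigr|$, the polarization identity shows the first-layer slack is $\gtrsim|a_1||b_1|\theta^2$; the BCH remainder splits as $R=R_1+R_2$, where the purely horizontal piece obeys $|R_1|\lesssim|a_1||b_1|\theta$, so $|R_1|^2\lesssim|a_1|^2|b_1|^2\theta^2$ is dominated by the slack precisely when $|a_1||b_1|$ is small enough --- this is where the threshold $\eta<\varepsilon_0(\mathbb G)$ enters --- while the remaining piece $|R_2|\lesssim|a_1||b_2|+|a_2||b_1|+|a_2||b_2|$ must be absorbed into a separate second-layer budget $(\alpha|a_2|+\beta|b_2|)^2-|\delta_\alpha(a_2)+\delta_\beta(b_2)+R_2|^2$. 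The paper's Lemma~\ref{TI} carries out exactly this bookkeeping with explicit constants. As written, your proposal is a plan with the decisive third of the argument deferred, not a proof.
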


For any $\eta < \varepsilon_0$,
call this norm the \emph{Hebisch-Sikora (HS) norm} on $\mathbb{G}$ as introduced in \cite{HebSik}.
Define the induced metric $d$ on $\mathbb{G}$ as
$
d(x,y) = \Vert y^{-1}x \Vert.
$
The continuity of the Carnot dilations implies in particular that $|\delta_{1/\Vert x \Vert}(x)| = \eta$.
For any horizontal point (that is, $p \in \mathbb{R}^n \times \{0 \}$),
we have 
\begin{equation}
\label{etafoot}
\Vert p \Vert 
= \inf \{ t \, : \, \tfrac{1}{t} | p | < \eta \} 
= \inf \{ t \, : \, \tfrac{1}{\eta} | p | < t \}
= \tfrac{1}{\eta} | p |.
\end{equation}
Moreover, we have $\Vert \tilde{\pi} (p) \Vert \leq \Vert p \Vert$ for any $p \in \G$. 
Indeed,
if there was some $t>0$ satisfying both $\Vert \tilde{\pi}(p) \Vert > t$ and $|\delta_{1/t}(p)| < \eta$, 
we would have $\eta^2 > \frac{p_1^2}{t^2} + \cdots + \frac{p_r^2}{t^{2r}} \geq \frac{p_1^2}{t^2} = \frac{\Vert \tilde{\pi}(p) \Vert^2}{t^2} \eta^2 > \eta^2$ 
which is impossible.  We also record that for any compact $K \subset \mathbb{G}$,
there is a constant $C>0$ (depending on $K$) so that
\begin{equation}
\label{compact}
d(x,y)^r \leq C|x-y|
\quad 
\text{for any } x,y \in K.
\end{equation}

 A metric $d$ on $\G$ is said to be {\it homogeneous} if $d: \R^N \times \R^N \ra [0,\infty)$ is
continuous with respect to the Euclidean topology, is left invariant,
and is $1$-homogeneous with respect to the dilations
$\{\delta_r\}_{r>0}$. The $1$-homogeneity of $d$ means that
$$
d(\delta_r(p),\delta_r(q)) = r\, d(p,q)
$$
for all $p,q\in\G$ and all
$r>0$. We note in particular that the Hebisch-Sikora  and the Carnot--Carath\'eodory metrics
are homogeneous.  Any two homogeneous metrics
$d_1$ and $d_2$ on a given Carnot group $\G$ are equivalent in the
sense that there exists a constant $L>0$ so that
\begin{equation}\label{quasiconvexity}
L^{-1}d_1(p,q) \le d_2(p,q) \le Ld_1(p,q)
\end{equation}
for all $p,q\in\G$; this is an easy
consequence of the assumptions.

We define the Jones $\beta$-numbers for a set $K \subset \mathbb{G}$ as follows: for any $x \in \G$ and $r>0$,
$$
\beta_K(B(x,r)) 
= \inf_L \sup_{z \in K \cap B(x,r)} \frac{d(z,L)}{r}
$$
where the infimum is taken over all possible \emph{horizontal lines}
$$
L = \{p \delta_s(\tilde{\pi}(p^{-1}q)) \, : \, s \in \mathbb{R}\} 
\quad \text{where } p,q \in \mathbb{G}.
$$

The following is the famous Baker-Campbell-Hausdorff formula.
\begin{theorem}[Dynkin, '47]
\label{BCH}
Suppose $e^{(\cdot)}:\mathfrak{g} \to \mathbb{G}$ is the exponential map.
Given $X,Y \in \mathfrak{g}$, 
choose $Z$ such that $e^Z = e^Xe^Y$.
Then
$$
Z = \sum_{k=1}^{\infty} \frac{(-1)^{k-1}}{k} \sum
P(r_1,s_1,\dots,r_k,s_k)
[\underbrace{X, \cdots [X}_{r_1},[ \underbrace{Y,\cdots [Y}_{s_1}, \cdots
[\underbrace{X, \cdots [X}_{r_k},[ \underbrace{Y,\cdots Y}_{s_k}]]] \cdots]
$$
where the second sum is taken over all $\{r_1,s_1,\dots,r_k,s_k\} \in \mathbb{N}^{2k}$
satisfying $r_i + s_i > 0$ for $i=1,\dots,k$,
and
$$
P(r_1,s_1,\dots,r_k,s_k) = \frac{1}{\sum_{i=1}^k (r_i + s_i) \prod_{i=1}^k r_i! s_i!}.
$$
\end{theorem}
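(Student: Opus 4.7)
The plan is to work in the completed free associative algebra $A$ on two noncommuting indeterminates $X$ and $Y$, where the series for $\exp$ and $\log$ converge in the adic topology determined by the augmentation ideal. The two factors in $P(r_1, s_1, \ldots, r_k, s_k)$ already signal the two-stage structure I would follow: the product $\prod_i r_i!\, s_i!$ records an associative expansion of a $k$-fold product, while the factor $\tfrac{1}{\sum_i (r_i+s_i)}$ is the $\tfrac{1}{n}$ produced when a degree-$n$ Lie element is recovered from an associative monomial via the Dynkin--Specht--Wever projection.

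For the associative step I would write $e^X e^Y = 1 + U$ with $U = \sum_{r+s\geq 1} \tfrac{X^r Y^s}{r!\,s!}$ and use $\log(1+U) = \sum_{k\geq 1} \tfrac{(-1)^{k-1}}{k} U^k$. Expanding the $k$-th power and collecting terms indexed by tuples $(r_1,s_1,\ldots,r_k,s_k)$ with $r_i+s_i > 0$ yields
\[
\log(e^X e^Y) \;=\; \sum_{k\geq 1}\frac{(-1)^{k-1}}{k}\sum_{r_i+s_i>0}\frac{X^{r_1} Y^{s_1} \cdots X^{r_k} Y^{s_k}}{\prod_i r_i!\,s_i!}.
\]
To convert the right-hand side into the nested-commutator form of the theorem I would invoke the Dynkin--Specht--Wever projection $\pi\colon X_{i_1}\cdots X_{i_n} \mapsto [X_{i_1},[X_{i_2},\ldots,[X_{i_{n-1}}, X_{i_n}]\cdots]]$, together with the classical identity $\pi(W) = nW$ valid for every homogeneous Lie element $W$ of degree $n$ in $A$.

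The main obstacle is establishing that $\log(e^X e^Y)$ actually lies in the Lie subalgebra generated by $X$ and $Y$, since otherwise the Dynkin--Specht--Wever step is meaningless. The cleanest route is Friedrichs' criterion: equip $A \otimes A$ with its tensor algebra structure and let $\Delta\colon A \to A\otimes A$ be the algebra homomorphism with $\Delta(X) = X\otimes 1 + 1\otimes X$ and $\Delta(Y) = Y\otimes 1 + 1\otimes Y$. Then $W \in A$ lies in the Lie subalgebra generated by $X, Y$ if and only if it is \emph{primitive}, i.e.\ $\Delta(W) = W\otimes 1 + 1\otimes W$. Primitivity of $X$ and $Y$ forces $e^X$ and $e^Y$ to be group-like, $\Delta(e^X) = e^X\otimes e^X$ and $\Delta(e^Y) = e^Y\otimes e^Y$, hence $\Delta(e^X e^Y) = (e^X e^Y)\otimes (e^X e^Y)$; taking $\log$ (which sends group-like elements to primitive ones) shows $\log(e^X e^Y)$ is primitive. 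Applying $\tfrac{1}{n}\pi$ to the associative expansion then combines $\tfrac{1}{n} \cdot \tfrac{1}{\prod_i r_i!\, s_i!}$ into $P(r_1,s_1,\ldots,r_k,s_k)$ and reproduces Dynkin's formula verbatim. The associative expansion and the identity $\pi(W) = nW$ are routine, but the primitive-element verification is the substantive input without which the whole scheme collapses.
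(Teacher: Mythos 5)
The paper does not prove this theorem: it is stated and cited as Dynkin's 1947 classical result and then simply invoked, so there is no in-paper argument to compare against. Your proposal is a correct rendering of the standard proof of the Dynkin form of the Baker--Campbell--Hausdorff formula: the associative expansion of $\log(e^X e^Y)$ in the completed free associative algebra yields the tuple-indexed sum with coefficients $\frac{(-1)^{k-1}}{k}\prod_i \frac{1}{r_i!\,s_i!}$; Friedrichs' primitive-element criterion (group-like elements multiply to group-like, $\log$ carries group-like to primitive) shows the result lies in the free Lie algebra; and the Dynkin--Specht--Wever identity $\pi(W)=nW$ on degree-$n$ Lie elements converts each associative monomial $X^{r_1}Y^{s_1}\cdots X^{r_k}Y^{s_k}$ into the right-nested bracket of the statement while contributing the missing $\frac{1}{\sum_i (r_i+s_i)}$ factor. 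One presentational remark: you should apply $\frac{1}{n}\pi$ degree by degree (so $n$ is the degree of the homogeneous component being projected, i.e.\ $n=\sum_i(r_i+s_i)$ for each summand), and since $\G$ is nilpotent all series terminate, so the adic-completion framework is overkill here though certainly correct.
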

Notice that the nested commutators vanish if $s_k > 1$ or if $s_k = 0$ and $r_k>1$.
Also, since $\mathbb{G}$ is nilpotent, 
the first sum terminates after finitely many terms,
and the length of the nested brackets is bounded from above.
That is, there are only finitely many summed nested bracket terms in the BCH formula for a Carnot group $\mathbb{G}$.

We will later make use of the following estimates established in \cite{HebSik} (for a verification, see the proof of Lemma~\ref{NH}).
Choose $a,b \in \mathbb{G}$ with $|a|<1$ and $|b|<1$.
Write $a=(a_1,a_2)$ and $b=(b_1,b_2)$ as above.
Then
$$
ab = (a_1+b_1,a_2+b_2+R(a,b))
$$
for some polynomial $R$ given by the BCH formula (Theorem~\ref{BCH}).
Write $R_1(a,b) = R((a_1,0),(b_1,0))$ and $R_2 = R-R_1$.
Then the BCH formula gives
\begin{equation}
\label{HS2}
|R_2(a,b)| \leq C_2(|a_1||b_2| + |a_2||b_1| + |a_2||b_2|)
\end{equation}
and
\begin{equation}
\label{HS1}
|R_1(a,b)| \leq C_1|a_1||b_1| \left| \frac{a_1}{|a_1|} - \frac{b_1}{|b_1|} \right|.
\end{equation}
for some constants $C_1$ and $C_2$ depending only on the group structure of $\G$.

For the remainder of the paper, 
fix a positive constant $\eta < \min \{ \varepsilon_0, \tfrac12 \}$ 
(where $\varepsilon_0$ is as in Theorem~\ref{HSdef}) such that,
if $|a| \leq \eta$ and $|b| \leq \eta$, then
\begin{equation}
\label{HS3}
(C_2+1)(|a_1|+|a_2|+|b_1| +|b_2|)\leq \frac18
\end{equation}
and 
\begin{equation}
\label{HS4}
(5C_1^2+1) |a_1||b_1|  \leq \frac12.
\end{equation}

Throughout the paper, we will write $a \lesssim b$
to indicate that there is a constant $C>0$ depending only on the metric space $(\mathbb{G},d)$
satisfying $a \leq Cb$.
Similarly, we will write $a \lesssim_{\xi} b$ if the constant depends also on some other parameter $\xi$.

\section{Curvature bound in a Carnot group}
\label{curvatureSec}

For $p,q \in \mathbb{G}$, denote the \emph{horizontal segment} between them as
$$
L_{pq} := \{ p \, \delta_t(\tilde{\pi}( p^{-1} q)) \, : \, t \in [0,1] \}.
$$
While this segment will always originate at $p$,
it will not intersect $q$ in general.
Note also that horizontal segments do not necessarily coincide with 
Euclidean segments if the step of $\mathbb{G}$ is $r > 2$.
For each $t \in [0,1]$, write $L_{pq}(t) = p \, \delta_t(\tilde{\pi}( p^{-1} q))$.
If $p=0$, then $L_q := L_{0q} \subset \mathbb{R}^n \times \{0\}$ is the segment
$$
L_q = \{ \delta_t(\tilde{\pi}(q)) \, : \, t \in [0,1] \} = \{ (tq_1,0) \, : \, t \in [0,1] \}.
$$
That is, $L_q$ is simply the Euclidean line segment from the origin to $\tilde{\pi}(q) = (q_1,0)$.
Our goal in this section will be to prove the following curvature estimate in $\mathbb{G}$.
Here, fix the value $m = 2^{-217}$.
(This value will be important in Section~\ref{TSTSec}.
The theorem actually holds for any $0 < m < 1$, but then the constant $C_0$ would depend also on $m$.)

\begin{theorem}
\label{Goal}
Suppose $a,z,v,w \in \mathbb{G}$ satisfy
$$
m \rho \leq \min \{ d(a,z), d(a,v), d(z,v), d(v,w) \}
$$
and
$$
\max \{ d(a,z), d(a,v), d(z,v), d(v,w), d(a,w) \} \leq \rho
$$
for some $\rho > 0$.
Then there is a constant $C_0 = C_0(\mathbb{G}) > 0$ so that
$$
\sup_{t \in [0,1]} d(L_{av}(t),L_{aw})^{2r^2} + \sup_{t \in [0,1]} d(L_{vw}(t),L_{aw})^{2r^2}
\leq C_0 \rho^{2r^2-1} \Delta
$$
where $\Delta := d(a,z) + d(z,v) + d(v,w) - d(a,w)$.
\end{theorem}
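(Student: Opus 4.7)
The first move is to normalize. Using the left-invariance and $1$-homogeneity of $d$, I would translate by $a^{-1}$ and dilate by $\delta_{1/\rho}$ to reduce to $a=0$, $\rho=1$. Since $\Delta$ scales linearly with $\rho$ and both sides of the inequality scale as $\rho^{2r^2}$, this reduces the target to
\[
\sup_{t \in [0,1]} d(L_{0v}(t), L_{0w})^{2r^2} + \sup_{t \in [0,1]} d(L_{vw}(t), L_{0w})^{2r^2} \lesssim \Delta.
\]
Moreover, I may assume $\Delta$ is small, as otherwise the bound is trivial from compactness of the configuration.

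The heart of the argument is to extract quantitative near-collinearity of horizontal parts from the near-geodesic hypothesis. Since every Baker--Campbell--Hausdorff commutator lies in $V_2 \oplus \cdots \oplus V_r$, horizontal parts are additive under group multiplication, so
\[
z_1 + (z^{-1}v)_1 + (v^{-1}w)_1 = w_1.
\]
Combining $\|x\| \geq |x_1|/\eta$ with the hypothesis $\|z\| + \|z^{-1}v\| + \|v^{-1}w\| \leq \|w\| + \Delta$ controls the Euclidean chord-arc defect $|z_1| + |v_1 - z_1| + |w_1 - v_1| - |w_1|$ in terms of $\Delta$ and the non-horizontal defect $\eta\|w\| - |w_1|$. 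A separate argument using the stratum-wise estimates \eqref{HS1} and \eqref{HS2} is then needed to show that $w$ (and the intermediate factors $z$, $z^{-1}v$, $v^{-1}w$) must themselves be nearly horizontal when $\Delta$ is small. Together these should yield a quantitative parallelism bound of $v_1$ (and $z_1$) relative to $w_1$, controlling $|v_1 - \lambda w_1|$ by a power of $\Delta$ whose exponent depends on the step $r$; the role of the auxiliary vertex $z$ is to provide the intermediate structure needed to close this step.

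With the parallelism in hand, I would estimate $d(L_{0v}(t), L_{0w})$ by choosing for each $t$ a parameter $s=s(t)$ that minimizes the horizontal discrepancy $sw_1 - tv_1$. Expanding $(-tv_1,0)(sw_1,0)$ via BCH (Theorem~\ref{BCH}) gives
\[
d(L_{0v}(t), L_{0w}(s)) = \bigl\|\bigl(sw_1 - tv_1,\ R_1((-tv_1,0),(sw_1,0))\bigr)\bigr\|,
\]
where $R_1 \in V_2 \oplus \cdots \oplus V_r$ is controlled by \eqref{HS1} via the parallelism. The contribution of the $k$-th stratum component of $R_1$ to the HS norm is extracted by a $k$-th root, and the worst case $k=r$ combines with the parallelism estimate to give $d \lesssim \Delta^{1/(2r^2)}$, i.e.\ the desired bound after raising to the $2r^2$ power. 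A symmetric argument, after left-translating by $v^{-1}$ and using that $v$ itself lies close to $L_{0w}$ (another consequence of the near-collinearity), handles the second supremum involving $L_{vw}$.

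The main obstacle is the parallelism step: the single scalar defect $\Delta$ must simultaneously force both the collinearity of the horizontal projections and the near-horizontality of $w$ and the intermediate factors, through a careful case analysis across the strata $V_2, \ldots, V_r$. The step $r$ enters both there, via the depth of nested BCH brackets that must be controlled, and again in the final stratum-wise $k$-th root extraction, compounding to produce the exponent $2r^2$.
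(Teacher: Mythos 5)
Your high-level structure is right: normalize to $a=0$, $\rho=1$; extract quantitative near-horizontality and near-collinearity from the small excess $\Delta$; then propagate that to a bound on the segment deviations. But the proposal leaves the hardest step essentially open, and this is precisely the technical core of the paper.

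You correctly observe that the horizontal abelianization gives $z_1 + (z^{-1}v)_1 + (v^{-1}w)_1 = w_1$, and that the near-geodesic hypothesis then controls the Euclidean chord-arc defect of $(0, z_1, v_1, w_1)$ in terms of $\Delta$ \emph{and} the deficits $\eta\|w\|-|w_1|$, $\eta\|v\|-|v_1|$. You then flag, correctly, that closing this loop requires a ``separate argument'' showing those non-horizontal deficits are themselves controlled by $\Delta$. That separate argument is exactly Lemma~\ref{TI} of the paper: a quantitative strict subadditivity estimate for the Hebisch--Sikora norm, obtained by reopening and tightening the original Hebisch--Sikora proof to track a simultaneous lower bound on $\|x\|+\|y\|-\|xy\|$ by $\|NH(xy)\|^{2r}/(\|x\|+\|y\|)^{2r-1}$ (the non-horizontality term) and $d_{\mathbb{R}^n}(x_1,\ell_{x_1+y_1})^2/(\|x\|+\|y\|)$ (the non-collinearity term). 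Invoking \eqref{HS1} and \eqref{HS2} ``stratum-wise,'' as you suggest, is not by itself enough: the delicate point is to arrange the cross-terms (via a polarization identity, Lemma~\ref{height}, and a careful choice of the auxiliary scaling parameters $s,t$) so that both defect terms appear with the right sign. Without this, there is no quantitative bound and the proposal is not a proof; labeling it ``the main obstacle'' acknowledges but does not resolve the gap.

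Two further issues. First, the role of $z$ is more specific than ``intermediate structure'': the paper applies the subadditivity lemma to the chain $(0,z,v)$ to obtain the bound $\|NH(v)\|^{2r}\lesssim\rho^{2r-1}\Delta$ (equation~\eqref{Delta2}), which is what controls the vertical offset of the starting point of $L_{vw}$; the chain $(0,v,w)$ alone would not bound $\|NH(v)\|$. Second, your propagation step takes a genuinely different route from the paper. The paper first proves three \emph{endpoint} estimates \eqref{i}--\eqref{iii} at exponent $2r$, then uses Lemma~\ref{SegLem} (a metric interpolation lemma for horizontal segments with close endpoints, from \cite{LiThesis}) to get the full supremum, paying an extra $1/r$ in the exponent, which is where $2r^2$ comes from. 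You instead propose a direct pointwise BCH computation for each $t$. For $\sup_t d(L_{0v}(t),L_{0w})$ this is plausible since both segments lie in the horizontal slab $\mathbb{R}^n\times\{0\}$. But the claimed ``symmetric argument'' for $\sup_t d(L_{vw}(t),L_{0w})$ is not symmetric: after left-translating by $v^{-1}$, the reference object $v^{-1}L_{0w}$ is no longer a horizontal segment through the origin, it picks up a non-horizontal offset of order $\|NH(v)\|$ plus BCH corrections that are not visible in your sketch. Your exponent bookkeeping (parallelism $\to\Delta^{1/(2r)}$, then a stratum root $\to\Delta^{1/(2r^2)}$) also does not match the paper's chain (parallelism is $\Delta^{1/2}$, non-horizontality is $\Delta^{1/(2r)}$, combined endpoint bound is $\Delta^{1/(2r)}$, then Lemma~\ref{SegLem} costs $1/r$); it is unclear from the sketch where your extra factor of $r$ is supposed to come from. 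In short: the skeleton is sound, but the decisive quantitative subadditivity estimate is missing, and the pointwise BCH propagation step needs substantially more care for the second supremum.
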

\begin{figure}[H]
\centering
\includegraphics[scale = 1]{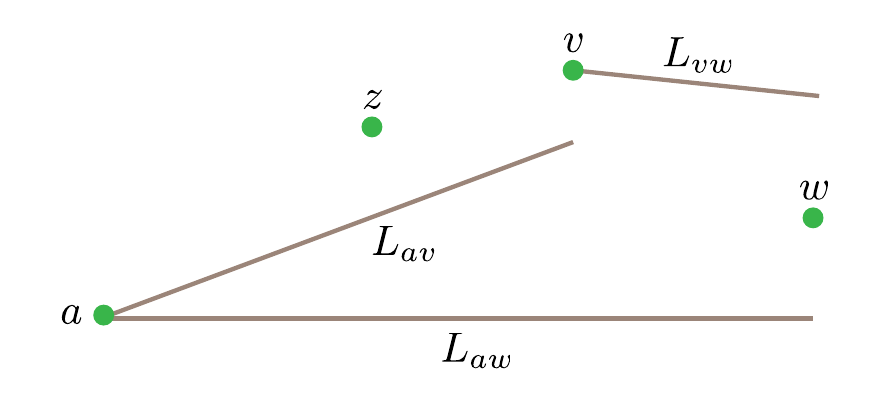}
\caption{}
\label{fig1}
\end{figure}

\subsection{Preliminary lemmas}

We will need the estimates from the following two lemmas in the proof of Lemma~\ref{TI}.

\begin{lemma}
\label{NH}
For any $a = (a_1,a_2)$ and $b = (b_1,b_2)$ in $\mathbb{G}$ with $|a| < 1$ and $|b| < 1$, we have
$$
\Vert NH(ab) \Vert^r \lesssim |a_1||b_1|\left| \frac{a_1}{|a_1|} - \frac{b_1}{|b_1|} \right| + |a_2|(|b_1|+|b_2|) + |b_2|(|a_1| + |a_2|) .
$$
\end{lemma}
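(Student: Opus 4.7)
My strategy is to compute $NH(ab)$ in coordinates using the product formula from Section~\ref{CarnotSec} and the BCH bounds \eqref{HS1} and \eqref{HS2}, and then lift the resulting Euclidean estimate to the Hebisch--Sikora norm via the stratified scaling.

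First, I would write $NH(ab) = \tilde\pi(ab)^{-1} \cdot ab$. Since $\tilde\pi(ab) = (a_1+b_1, 0)$ and horizontal one-parameter subgroups of $\mathbb{G}$ are abelian, $\tilde\pi(ab)^{-1} = (-(a_1+b_1), 0)$. Applying the product law $uv = (u_1+v_1, u_2+v_2+R(u,v))$ a second time with $u = (-(a_1+b_1), 0)$ and $v = ab = (a_1+b_1, a_2+b_2+R(a,b))$ gives
$$NH(ab) = \bigl(0,\ a_2+b_2+R(a,b)+R(u,v)\bigr).$$
Here the auxiliary correction $R(u,v)$ collapses to $R_2(u,v)$: the elements $u$ and $(v_1,0) = (a_1+b_1,0)$ are inverses in the one-dimensional abelian horizontal subgroup they generate, so $R_1(u,v) = 0$. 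Combining \eqref{HS2} with $|a|, |b|<1$ gives $|R(u,v)| \lesssim |a_2+b_2+R(a,b)|$, hence $|NH(ab)| \lesssim |a_2+b_2+R(a,b)|$. Applying \eqref{HS1} and \eqref{HS2} to $R = R_1 + R_2$ itself then bounds $|R(a,b)|$ by the rotational factor $|a_1||b_1|\left|\tfrac{a_1}{|a_1|} - \tfrac{b_1}{|b_1|}\right|$ (from \eqref{HS1}) plus the cross-terms $|a_1||b_2| + |a_2||b_1| + |a_2||b_2|$ (from \eqref{HS2}).

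The final step is to convert this Euclidean estimate into a bound on $\Vert NH(ab)\Vert^r$. Since $NH(ab)$ is purely non-horizontal, the Hebisch--Sikora construction yields $\Vert NH(ab)\Vert^r \asymp \max_{i \geq 2}|NH(ab)_{(i)}|^{r/i}$, where $NH(ab)_{(i)} \in V_i$ denotes the stratum-$i$ component. By the BCH formula (Theorem~\ref{BCH}), each $NH(ab)_{(i)}$ is a polynomial of total weight $i$ in the components of $a$ and $b$. Raising to the weighted power $r/i \geq 1$ and using $|a|, |b| < 1$ allows these polynomial contributions to reorganize into the bilinear right-hand side of the lemma.

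The main obstacle will be this last conversion. One must track, stratum by stratum, how the BCH polynomial terms in $NH(ab)_{(i)}$ group together---under the HS weight $r/i$---to produce precisely the rotational factor $|a_1||b_1|\left|\tfrac{a_1}{|a_1|} - \tfrac{b_1}{|b_1|}\right|$ together with the bilinear cross-terms $|a_2|(|b_1|+|b_2|) + |b_2|(|a_1|+|a_2|)$. The $r$-th power on the left-hand side is essential: it provides the headroom $r/i \geq 1$ in each stratum that absorbs the polynomial contributions into the correct bilinear form.
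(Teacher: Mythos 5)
Your coordinate-level derivation $NH(ab) = \bigl(0,\ a_2+b_2+R(a,b)+R(u,v)\bigr)$ with $u = (-(a_1+b_1),0)$, together with the observation that $R_1(u,v)=0$ and hence $|R(u,v)| \lesssim |a_2+b_2+R(a,b)|$ by \eqref{HS2}, is a more explicit route than the paper's, which works at the level of the Lie algebra and decomposes $\log(NH(ab))$ directly into iterated brackets of $A_1,A_2,B_1,B_2$. However, there are two issues. The first is minor: the conversion from the Euclidean estimate to $\Vert NH(ab)\Vert^r$ is not the delicate step you suggest, and the stratum-by-stratum analysis via $\max_{i\geq 2}|NH(ab)_{(i)}|^{r/i}$ is unnecessary. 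The paper closes this step in one line, noting that $NH(ab)$ lies in a fixed compact set whenever $|a|,|b|<1$, so \eqref{compact} immediately gives $\Vert NH(ab)\Vert^r \lesssim |NH(ab)|$.

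The second issue is the genuine gap. You establish $|NH(ab)| \lesssim |a_2+b_2+R(a,b)|$ and then bound $|R(a,b)|$ by the rotational factor plus cross-terms, but at no point do you bound the standalone linear contributions $|a_2|$ and $|b_2|$ by the lemma's bilinear right-hand side, and in general they are not so bounded. The stratum-by-stratum plan cannot rescue this: the $V_2$ component of $NH(ab)$ already equals the $V_2$ part of $a_2+b_2$ plus $\tfrac12[A_1,B_1]$, so $|NH(ab)_{(2)}|^{r/2}$ carries terms linear in $a_2$ and $b_2$ even after raising to the weight $r/2$. The paper's proof avoids producing these terms explicitly by asserting that $\log(NH(ab))$ consists of iterated brackets each ending in one of $[A_1,B_1]$, $[A_1,B_2]$, $[A_2,B_1]$, $[A_2,B_2]$, and bounding each such bracket. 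Your more explicit computation shows exactly where that assertion needs justification: $\log(NH(ab))$ also carries the non-bracket linear contribution $A_2+B_2$ (as well as brackets such as those ending in $[A_1,A_2]$ or $[B_1,B_2]$), and neither your write-up nor the quoted step of the paper explains why these do not spoil the bilinear estimate. Accounting for these terms is the essential missing step in your proposal.
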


In particular, we will use the fact that 
\begin{equation}
\label{NHNormBd}
\alpha \Vert NH(ab) \Vert^{2r} \leq \left(|a_1||b_1|\left| \frac{a_1}{|a_1|} - \frac{b_1}{|b_1|} \right|\right)^2 + \left(|a_2|(|b_1|+|b_2|) + |b_2|(|a_1| + |a_2|)\right)^2
\end{equation}
for some $0 < \alpha < 1$ depending only on the metric and group structure of $\mathbb{G}$.

\begin{proof}
We may write $a = e^A$ and $b=e^B$ for $A,B \in \mathfrak{g}$.
In other words, $A = \log a$ and $B = \log b$.
Write $A=A_1+A_2$ and $B=B_1+B_2$ where $A_1,B_1 \in V_1$ lie in the horizontal (first) layer of $\mathfrak{g}$.
According to the Baker-Campbell-Hausdorff formula (Theorem~\ref{BCH})
and the bilinearity of the Lie bracket, 
$\log (ab)$ is a finite sum of constant multiples of 
\begin{equation}
\label{Zbrackets}
[Z_1,[Z_2,\cdots,[Z_{k-2},[Z_{k-1},Z_k]]\cdots]]
\end{equation}
for some positive integer $k \leq r$ where each 
$Z_i$ is one of $A_1$, $B_1$, $A_2$, or $B_2$.
In particular, $[Z_{k-1},Z_k]$ must have the form
\begin{equation}
\label{brackets}
[A_1,B_1], \quad [A_1,B_2], \quad [A_2,B_1], \quad \text{or} \quad [A_2,B_2].
\end{equation}
Indeed,
we must have $s_n = 1$ or we have $s_n = 0$ and $r_n  = 1$
(for otherwise the brackets vanish),
so the nested brackets \eqref{Zbrackets} must have the form
$$
[\cdot,[\cdot,\cdots,[A,B]\cdots]] = [\cdot,[\cdot,\cdots,[A_1+A_2,B_1+B_2]\cdots]] = \sum_{i=1}^2 \sum_{j=1}^2 [\cdot,[\cdot,\cdots,[A_i,B_j]\cdots]]
$$
(since $[A,B]=-[B,A]$).

By definition, we have
$$
|NH(ab)| = |\tilde{\pi}(ab)^{-1} (ab)| = | (-a_1-b_1,0) (a_1 + b_1, a_2 + b_2 + P(a,b)) |
$$
for some polynomial $P$ (given by the BCH formula).
Thus by a similar argument as above, 
$\log (NH(ab))$ is a finite sum of constant multiples of nested brackets \eqref{Zbrackets}
each of which ends with a term of the form \eqref{brackets}.
Consider the norm $| \cdot|$ on $\mathfrak{g}$ 
induced by the Euclidean norm on the exponential coordinates $\mathbb{R}^N$.
(i.e. for $X \in \mathfrak{g}$ with $e^X = x \in \mathbb{G}$, we have $|X| = |x|$.)
Since 
$$
[A_1,B_1] = |A_1||B_1| \left[ \frac{A_1}{|A_1|},\frac{B_1}{|B_1|} \right]
= |A_1||B_1| \left[ \frac{A_1}{|A_1|} - \frac{B_1}{|B_1|},\frac{B_1}{|B_1|} \right],
$$
the bilinearity of the Lie bracket gives the following bound:
$$
|[A_1,B_1]| \lesssim |A_1||B_1|\left| \frac{A_1}{|A_1|} - \frac{B_1}{|B_1|} \right|.
$$
Thus, for those brackets \eqref{Zbrackets} ending with $[A_1,B_1]$, we have
$$
|[Z_1,[Z_2,\cdots,[Z_{k-2},[Z_{k-1},Z_k]]\cdots]]| 
\lesssim \left( \prod_{i=1}^{k-2} |Z_i| \right) \left|[A_1,B_1]\right|
\lesssim |a_1||b_1|\left| \frac{a_1}{|a_1|} - \frac{b_1}{|b_1|} \right|
$$
since $|a| < 1$ and $|b| < 1$.
All other nested brackets \eqref{Zbrackets} which do not end with $[A_1,B_1]$ satisfy
$$
|[Z_1,[Z_2,\cdots,[Z_{k-2},[Z_{k-1},Z_k]]\cdots]]|
\lesssim \prod_{i=1}^k |Z_i| 
\leq |a_2|(|b_1|+|b_2|) + |b_2|(|a_1| + |a_2|).
$$
Since we may estimate $|NH(ab)| = |\log(NH(ab))|$ by a finite sum of constant multiples of the nested brackets \eqref{Zbrackets},
we have proven
$$
|NH(ab)| \lesssim |a_1||b_1|\left| \frac{a_1}{|a_1|} - \frac{b_1}{|b_1|} \right| + |a_2|(|b_1|+|b_2|) + |b_2|(|a_1| + |a_2|) .
$$
Hence there is some compact set $K \subset \mathbb{G}$
(depending only on the group structure and metric)
so that $NH(ab) \in K$ for any $a$ and $b$ in the Euclidean unit ball.
That is, we may apply \eqref{compact} to conclude
$\Vert NH(ab) \Vert^r \lesssim |NH(ab)|$.
This completes the proof.
\end{proof}

The following lemma is entirely Euclidean in nature and elementary.
The details of the proof are included for completeness.

\begin{lemma}
\label{height}
Fix $c,d \in \mathbb{R}^n$.
Let $\ell_{c+d}$ denote the segment from the origin to $c+d$.
Then
$$
d_{\mathbb{R}^n}(c,\ell_{c+d})^2 \leq \tfrac12 |c||d| \left| \frac{c}{|c|} - \frac{d}{|d|} \right|^2.
$$
\end{lemma}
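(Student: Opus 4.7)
My plan is to prove the lemma by exhibiting a single well-chosen point on the segment $\ell_{c+d}$ and reducing the inequality to a one-line algebraic fact; the proof is entirely Euclidean and should be short.

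First I would dispose of the degenerate cases: if $c=0$ or $d=0$, both sides of the claimed inequality are zero (interpreting the right-hand side as $0$ whenever the factor $|c||d|$ vanishes), so I may assume $|c|,|d|>0$. Write $\hat c := c/|c|$ and $\hat d := d/|d|$.

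The core step is the choice of candidate point
$$p^\ast := t^\ast(c+d) \in \ell_{c+d}, \qquad t^\ast := \frac{|c|}{|c|+|d|} \in [0,1].$$
This weight is engineered so that the coefficients of $c$ and $-d$ in $c - p^\ast = (1-t^\ast)c - t^\ast d$ have matching scalar magnitude $\frac{|c||d|}{|c|+|d|}$; the difference therefore factors as a scalar multiple of $\hat c - \hat d$:
$$c - p^\ast = \frac{|d|}{|c|+|d|}\,c - \frac{|c|}{|c|+|d|}\,d = \frac{|c||d|}{|c|+|d|}\bigl(\hat c - \hat d\bigr).$$
Since $p^\ast \in \ell_{c+d}$, this gives
$$d_{\mathbb{R}^n}(c,\ell_{c+d})^2 \leq |c-p^\ast|^2 = \frac{|c|^2|d|^2}{(|c|+|d|)^2}\,|\hat c - \hat d|^2.$$

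The lemma then reduces to the elementary estimate $(|c|+|d|)^2 \geq 2|c||d|$, which is just $|c|^2+|d|^2 \geq 0$, and this yields $\frac{|c|^2|d|^2}{(|c|+|d|)^2} \leq \frac{1}{2}|c||d|$. There is no genuine obstacle here: the only piece of insight is identifying the weight $t^\ast$, after which the calculation is mechanical. The edge case $c+d=0$ (so $\ell_{c+d}=\{0\}$ and $p^\ast=0$) is covered by the very same formulas, giving $|c|^2 \leq \tfrac{1}{2}|c||d|\cdot|\hat c-\hat d|^2 = 2|c|^2$, so the bound holds (non-sharply) in this situation as well.
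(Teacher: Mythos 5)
Your proof is correct, and it takes a genuinely different — and cleaner — route than the paper's. The paper proves the inequality by computing the exact distance $d_{\mathbb{R}^n}(c,\ell_{c+d})$: it introduces the scalar projection $u = \langle c, c+d\rangle/|c+d|$, splits into the three cases $u\le 0$, $u\ge|c+d|$, and $0<u<|c+d|$, and in each case expands the right-hand side via the polarization identity $|c||d|\,|\hat c-\hat d|^2 = |c|^2+2|c||d|+|d|^2-|c+d|^2$ to verify the bound. You instead sidestep the case analysis entirely by choosing the single candidate point $p^\ast = \frac{|c|}{|c|+|d|}(c+d)$ on the segment, which produces the factorization $c-p^\ast = \frac{|c||d|}{|c|+|d|}(\hat c-\hat d)$, and then you only need $(|c|+|d|)^2\ge 2|c||d|$. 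Your argument is shorter and avoids the projection computation, at the cost of a slightly weaker intermediate bound (you upper-bound the distance by $|c-p^\ast|$ rather than computing it exactly); but the final inequality you prove is identical, so nothing is lost for the purposes of the paper.
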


\begin{proof}
We will make frequent use of the following consequence of the polarization identity:
\begin{equation}
\label{polar}
|c||d| \left| \frac{c}{|c|} - \frac{d}{|d|} \right|^2 = |c|^2+2|c||d| + |d|^2 - |c+d|^2.
\end{equation}
Let $u$ denote the scalar projection of $c$ along $c+d$.
That is, $u = \frac{\langle c,c+d \rangle}{|c+d|}$.
If $u \leq 0$, then\footnote{Indeed, if $u \leq 0$, then the angle between the vector $c$ and $\ell_{c+d}$ is between $\pi / 2$ and $3 \pi / 2$.} 
the closest point in $\ell_{c+d}$ to $c$ is the origin, so $d_{\mathbb{R}^n}(c,\ell_{c+d}) = |c|$.
We then have\footnote{The assumption $u \leq 0$
implies $|c|^2 + \langle c,d \rangle
= \langle c,c \rangle + \langle c,d \rangle
= \langle c,c+d \rangle
\leq 0$.
Hence the polarization identity yields 
$2|c|^2 \leq - 2 \langle c,d \rangle = |c|^2  + |d|^2 - |c+d|^2$.}
$$
2d_{\mathbb{R}^n}(c,\ell_{c+d})^2 
= 2|c|^2
\leq |c|^2  + |d|^2 - |c+d|^2
\leq |c||d| \left| \frac{c}{|c|} - \frac{d}{|d|} \right|^2.
$$
If $u \geq |c+d|$, then the closest point in $\ell_{c+d}$ to $c$ is $c+d$.
Since\footnote{The polarization identity gives 
$|d|^2 = |c|^2 + |c+d|^2  - 2 \langle c,c+d \rangle
\leq |c|^2 + |c+d|^2 - 2|c+d|^2$ since the assumption $u \geq |c+d|$
implies $- 2 \langle c,c+d \rangle \leq -2|c+d|^2$.}
$|d|^2 \leq |c|^2 - |c+d|^2$, we have 
$$
2d_{\mathbb{R}^n}(c,\ell_{c+d})^2 
= 2|c - (c+d)|^2
=2|d|^2
=|d|^2 + |d|^2
\leq |c|^2 + |d|^2 - |c+d|^2
\leq |c||d| \left| \frac{c}{|c|} - \frac{d}{|d|} \right|^2.
$$

Now suppose $0 < u < |c+d|$.
That is, the projection of $c$ to the line containing $\ell_{c+d}$
actually lies in $\ell_{c+d}$.
Since this projection divides $\ell_{c+d}$ into segments of length $u$ and $|c+d|-u$,
the Pythagorean Theorem gives
\begin{align*}
|c||d| \left| \frac{c}{|c|} - \frac{d}{|d|} \right|^2 &= |c|^2+2|c||d| + |d|^2 - |c+d|^2 \\
& = \left[|c|^2 - u^2 \right] + \left[|d|^2 - (|c+d| - u)^2 \right] +2|c||d| +2u^2 - 2u|c+d| \\
& = d_{\mathbb{R}^n}(c,\ell_{c+d})^2 + d_{\mathbb{R}^n}(c,\ell_{c+d})^2 + 2|c||d| + 2u(u-|c+d|) \\
& = 2d_{\mathbb{R}^n}(c,\ell_{c+d})^2 + 2(|c||d| - u|d| + u|d| + u(u-|c+d|)) \\
& = 2d_{\mathbb{R}^n}(c,\ell_{c+d})^2 + 2|d|(|c| - u) + 2u(|d|- (|c+d|-u)) \\
& \geq 2d_{\mathbb{R}^n}(c,\ell_{c+d})^2
\end{align*}
since $(|c+d|-u)^2 \leq (|c+d|-u)^2 + d_{\mathbb{R}^n}(c,\ell_{c+d})^2 = |d|^2$
and $u \leq |c|$ by the Cauchy-Schwartz inequality.
\end{proof}

The technical proof of this next lemma follows the example of the proof from \cite{HebSik}
that the HS-norm is sub-linear.
By tightening some of the bounds from \cite[Theorem 2]{HebSik},
we are able to estimate the error in the sub-linearity of the norm.
Again, we have set $m=2^{-217}$, but this lemma actually holds for any $m \in [0,1]$.

\begin{lemma}
\label{TI}
Fix $x,y \in \mathbb{G}$.
Set
$$
A := \frac{m^{2r}}{16} \left( \frac{\alpha \left\Vert NH\left( xy \right) \right\Vert^{2r}}{(\Vert x \Vert + \Vert y \Vert)^{2r-1}} + \frac{d_{\mathbb{R}^n}(x_1,\ell_{x_1+y_1})^2 }{\Vert x \Vert + \Vert y \Vert} \right).
$$
If $4A \leq \min \{ \Vert x \Vert, \Vert y \Vert \}$,
then 
$
\Vert x \Vert + \Vert y \Vert - \Vert xy \Vert
\geq A.
$
\end{lemma}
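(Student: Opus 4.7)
The plan is to set $t := \Vert x\Vert + \Vert y\Vert - A$ and show that $|\delta_{1/t}(xy)| \leq \eta$; by the definition of the HS-norm, this immediately yields $\Vert xy\Vert \leq t$, which is the desired inequality. Write $s := \Vert x\Vert$, $u := \Vert y\Vert$, $a := \delta_{1/t}(x)$, $b := \delta_{1/t}(y)$, and use that $\delta_{1/t}(xy) = ab$ by homogeneity. The hypothesis $4A \leq \min\{s,u\}$ gives $t \geq \tfrac{3}{4}(s+u)$ and $A \leq t/6$. Since each summand $|x_i|^2/r^{2i}$ in $|\delta_{1/r}(x)|^2$ decreases in $r$, the identity $|\delta_{1/s}(x)| = \eta$ yields $|a| \leq \eta s/t$ and analogously $|b| \leq \eta u/t$; in particular $|a|, |b| \leq \eta$, so the quantitative BCH bounds \eqref{HS1}--\eqref{HS4} are at our disposal.

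Next I would expand $ab = (a_1+b_1,\, a_2+b_2+R_1(a,b)+R_2(a,b))$, estimate
$$
|ab|^2 \leq |a_1+b_1|^2 + (|a_2|+|b_2|+|R_1|+|R_2|)^2,
$$
and apply the polarization identity (as in the proof of Lemma~\ref{height}) to get $|a_1+b_1|^2 = (|a_1|+|b_1|)^2 - D$, where the horizontal angle deficit is
$$
D := |a_1||b_1|\left|\tfrac{a_1}{|a_1|}-\tfrac{b_1}{|b_1|}\right|^2 \geq \tfrac{2}{t^2}\, d_{\mathbb{R}^n}(x_1,\ell_{x_1+y_1})^2.
$$
Combining with the Cauchy--Schwarz bound $(|a_1|+|b_1|)^2 + (|a_2|+|b_2|)^2 \leq (|a|+|b|)^2$ and the inequality $|a|+|b| \leq \eta(s+u)/t$, one obtains
$$
|ab|^2 - \eta^2 \leq \eta^2\cdot\frac{(s+u)^2-t^2}{t^2} + 2(|a_2|+|b_2|)(|R_1|+|R_2|) + (|R_1|+|R_2|)^2 - D,
$$
where $(s+u)^2 - t^2 = A\bigl(2(s+u)-A\bigr)$ is the ``surplus.''

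The hard part is to check that $D$ together with the implicit deficits encoded in the $R_i$ estimates suffice to swallow both the surplus $\eta^2((s+u)^2-t^2)/t^2 \approx 2\eta^2 A/t$ and the quadratic $R$ errors, \emph{with margin precisely equal to the formula for $A$}. For this I would invoke \eqref{HS1} to bound $|R_1|$ by a constant multiple of the horizontal angle factor and \eqref{HS2} to bound $|R_2|$ by the non-horizontal cross term on the right-hand side of \eqref{NHNormBd}, and then split the analysis by which of the two squared terms dominates $\alpha \Vert NH(ab)\Vert^{2r}$ in \eqref{NHNormBd}. In the horizontal-dominant case, $D$ itself absorbs the contribution of $\Vert NH(xy)\Vert^{2r}/(s+u)^{2r-1}$ via the dilation identity $\Vert NH(ab)\Vert = \Vert NH(xy)\Vert/t$; in the non-horizontal-dominant case, the smallness constraints \eqref{HS3} and \eqref{HS4} convert the quadratic $R$ terms into a negligible fraction of the $|R_2|$ cross term, so that the surviving margin again matches $\alpha\Vert NH(xy)\Vert^{2r}/(s+u)^{2r-1}$. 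Rearranging the resulting inequality and multiplying through by $t/\eta^2$, both summands in the formula for $A$ are recovered, and the factor $m^{2r}/16$ appears as the product of the various structural constants together with the slack factor built into the hypothesis $4A \leq \min\{s,u\}$.
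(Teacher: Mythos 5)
Your overall strategy---set $t = \Vert x\Vert + \Vert y\Vert - A$, show $|\delta_{1/t}(xy)| \le \eta$, and conclude $\Vert xy\Vert \le t$---is exactly the one the paper uses, and your preliminary observations ($t \ge \tfrac34(\Vert x\Vert + \Vert y\Vert)$, $|a| \le \eta\Vert x\Vert/t$, and $D \ge \tfrac{2}{t^2}d_{\mathbb{R}^n}(x_1,\ell_{x_1+y_1})^2$) are all correct. The argument breaks down precisely at the step you label ``the hard part,'' and the cause is specific: the triangle-inequality estimate
$$
|a_2 + b_2 + R_1 + R_2| \le |a_2| + |b_2| + |R_1| + |R_2|
$$
is too lossy. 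The vertical coordinate of $ab$ is exactly $a_2 + b_2 + R_1 + R_2$, and the term $\alpha\Vert NH(ab)\Vert^{2r}$ in the formula for $A$ has to appear as a \emph{negative} contribution to $|ab|^2$. After the triangle inequality your only negative term is $D$, and $D$ can vanish while $\Vert NH(ab)\Vert$ does not (take $x_1$ parallel to $y_1$ with $x_2, y_2$ nonzero). So your ``non-horizontal-dominant'' branch has no source for the required margin: $2(|a_2|+|b_2|)(|R_1|+|R_2|)$ and $(|R_1|+|R_2|)^2$ are nonnegative, and the smallness constraints \eqref{HS3}--\eqref{HS4} make them small but cannot change their sign.

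The paper never applies the triangle inequality to the vertical coordinate. Writing $a = \delta_{1/\Vert x\Vert}(x)$, $b = \delta_{1/\Vert y\Vert}(y)$ (so $|a| = |b| = \eta$) and $s = \Vert x\Vert/(\Vert x\Vert + \Vert y\Vert - A)$, $t = \Vert y\Vert/(\Vert x\Vert + \Vert y\Vert - A) \in [0,1]$, it keeps the vertical part as $v + (0,R_1)$ with $v = \delta_s(a_2) + \delta_t(b_2) + (0,R_2(\delta_s(a),\delta_t(b)))$. The strict gain $|\delta_s(a_2)| = s^2|a_2| < s|a_2|$, combined with the explicit $R_2$ bound \eqref{HS2} and the inequality $\tfrac34 s + t \le 1$, yields
$$
|v| \le s|a_2| + t|b_2| - \tfrac12 st\bigl(|a_2|+|b_2|\bigr) - \bigl(\text{nonhorizontal cross term}\bigr),
$$
and squaring this produces a subtracted square, which via \eqref{NHNormBd} is exactly $\alpha\Vert NH(\delta_s(a)\delta_t(b))\Vert^{2r}$. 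The $R_1$ interaction is handled by the calibrated Young inequality $2\langle v,(0,R_1)\rangle \le st|v|^2 + |R_1|^2/st$, arranged so that $(1+1/st)|R_1|^2$ is absorbed by $D$ using \eqref{HS4} while $(1+st)|v|^2$ is absorbed by the strict bound on $|v|$. Your Cauchy--Schwarz step $(|a_1|+|b_1|)^2 + (|a_2|+|b_2|)^2 \le (|a|+|b|)^2$ independently discards the $s^4 < s^2$ gain, so even repairing the triangle inequality downstream would not rescue the argument as written.
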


\begin{proof}
Fix $x,y \in \mathbb{G}$.
We may assume that neither $x$ nor $y$ is 0
as, in that case, $A=0$ and the result trivially follows. 
Set $a = \delta_{1 / \Vert x \Vert} (x)$ and $b = \delta_{ 1 / \Vert y \Vert} (y)$
so that $|a|=|b|=\eta$.
Write 
$$
s = \frac{\Vert x \Vert}{\Vert x \Vert + \Vert y \Vert - A} 
\quad
\text{and}
\quad
t = \frac{\Vert y \Vert}{\Vert x \Vert + \Vert y \Vert - A}. 
$$
Note that $s,t \in [0,1]$ since $4A \leq \min \{ \Vert x \Vert, \Vert y \Vert \}$. 
These values have been chosen so that
$$
\left| \delta_{\frac{1}{\Vert x \Vert + \Vert y \Vert - A}}(xy) \right|
=
\left| \delta_{\frac{\Vert x \Vert}{\Vert x \Vert + \Vert y \Vert-A}} \left( \delta_{\frac{1}{\Vert x \Vert}} (x) \right) 
\cdot \delta_{\frac{\Vert y \Vert}{\Vert x \Vert + \Vert y \Vert-A}} \left( \delta_{\frac{1}{\Vert y \Vert}} (y) \right) \right|
=\left| \delta_{s} \left( a \right) \delta_{t} \left( b \right) \right|.
$$

Write $a=(a_1,a_2)$ and $b=(b_1,b_2)$ as before.
For ease of notation, we write write $\delta_{\lambda} (a_1) = \delta_{\lambda} ((a_1,0))$
and $\delta_{\lambda} (a_2) = \delta_{\lambda} ((0,a_2))$ for $\lambda > 0$, 
and we similarly use the shorthand $\delta_{\lambda} (b_1)$ and $\delta_{\lambda} (b_2)$.
Write $v = \delta_{s}(a_2) + \delta_{t}(b_2) + (0,R_2(\delta_{s} (a), \delta_{t}(b)))$ where $R_2$ is as defined in Section~\ref{CarnotSec}.
The bounds \eqref{HS2} and \eqref{HS3} give
\begin{align*}
|v| 
&\stackrel{\eqref{HS2}}{\leq} s^2|a_2|+t^2|b_2| + C_2\left( |\delta_s (a_1)||\delta_{t}(b_2)| + |\delta_s (a_2)||\delta_{t}(b_1)| + |\delta_s (a_2)||\delta_{t}(b_2)|\right) \\
&\;\leq s^2|a_2|+t^2|b_2| + st(C_2+1)\left( |a_2|(|b_1|+|b_2|) + |b_2|(|a_1| + |a_2|) \right) \\
&\hspace{1.5in} - \left( |\delta_s (a_2)|(|\delta_{t}(b_1)|+|\delta_{t}(b_2)|) + |\delta_{t}(b_2)|(|\delta_s (a_1)| + |\delta_s (a_2)|) \right) \\
& \stackrel{\eqref{HS3}}{\leq} s^2|a_2|+t^2|b_2| + \tfrac14 st \left(|a_2|+ |b_2|\right) \\
&\hspace{1.5in} - \left( |\delta_s (a_2)|(|\delta_{t}(b_1)|+|\delta_{t}(b_2)|) + |\delta_{t}(b_2)|(|\delta_s (a_1)| + |\delta_s (a_2)|) \right)\\
& \;\leq  s|a_2|+t|b_2| - \tfrac12 st \left(|a_2|+ |b_2|\right) \\
&\hspace{1.5in} - \left( |\delta_s (a_2)|(|\delta_{t}(b_1)|+|\delta_{t}(b_2)|) + |\delta_{t}(b_2)|(|\delta_s (a_1)| + |\delta_s (a_2)|) \right).
\end{align*}
This last inequality follows from the following argument:
since $4A \leq \Vert y \Vert$, we have 
$$
\tfrac34t - (1-s)
= \frac{\tfrac34 \Vert y \Vert}{\Vert x \Vert + \Vert y \Vert - A} 
- \frac{\Vert y \Vert - A}{\Vert x \Vert + \Vert y \Vert  - A}
= \frac{A-\tfrac14 \Vert y \Vert}{\Vert x \Vert + \Vert y \Vert  - A}
\leq 0
$$
so $(\tfrac34 t +s) \leq 1$ (and similarly $(\tfrac34 s +t) \leq 1$), and thus
$$
s^2|a_2|+t^2|b_2| -s|a_2| - t|b_2| + \tfrac34 st \left(|a_2|+ |b_2|\right) 
= s|a_2|\left((\tfrac34 t +s) - 1\right) + t|b_2|\left((\tfrac34s + t) - 1\right) \leq 0.
$$
Moreover, the inequalities $(\tfrac34 t +s) \leq 1$ and $(\tfrac34 s +t) \leq 1$ imply that
$$
|v| \leq s^2|a_2|+t^2|b_2| + \tfrac14 st \left(|a_2|+ |b_2|\right)
\leq |a_2|+ |b_2|.
$$
Hence
\begin{align*}
|v|^2  \left( 1+st \right) &\leq |v|^2 + st |v| \left(|a_2|+ |b_2|\right) \\
&\leq (|v| + \tfrac12 st \left(|a_2|+ |b_2|\right) )^2 \\
&\leq (s|a_2|+t|b_2|- \left( |\delta_s (a_2)|(|\delta_{t}(b_1)|+|\delta_{t}(b_2)|) + |\delta_{t}(b_2)|(|\delta_s (a_1)| + |\delta_s (a_2)|) \right)  )^2 \\
&\leq (s|a_2|+t|b_2|)^2 - \left( |\delta_s (a_2)|(|\delta_{t}(b_1)|+|\delta_{t}(b_2)|) + |\delta_{t}(b_2)|(|\delta_s (a_1)| + |\delta_s (a_2)|) \right)^2.
\end{align*}
This last inequality follows from the fact that $(k-\ell)^2 \leq k^2 - \ell^2$ when $\ell \leq k$ and
\begin{align*}
|\delta_s (a_2)|(|\delta_{t}(b_1)|+|\delta_{t}(b_2)|) + |\delta_{t}(b_2)|&(|\delta_s (a_1)| + |\delta_s (a_2)| \\
&\leq st\left( |a_2|(|b_1|+|b_2|) + |b_2|(|a_1| + |a_2|) \right) \\
&\leq s|a_2|+t|b_2|
\end{align*}
since $|a|=|b|=\eta < \frac12$.
Therefore 
\eqref{polar} together with the fact that $2\langle u_1,u_2 \rangle \leq st|u_1|^2 + \frac{|u_2|^2}{st}$ for any $u_1,u_2 \in \mathbb{R}^N$ gives
\begin{align*}
|\delta_s (a) \delta_{t}(b)|^2
&= |\delta_s (a_1) + \delta_{t}(b_1)|^2 + |v+(0,R_1(\delta_s (a), \delta_{t}(b)))|^2 \\
&\leq (|\delta_s (a_1)| + |\delta_{t}(b_1)|)^2 
-|\delta_s (a_1)||\delta_{t} (b_1)|
\left| \frac{\delta_s (a_1)}{|\delta_s (a_1)|} - \frac{\delta_{t} (b_1)}{|\delta_{t} (b_1)|} \right|^2 \\
& \hspace{1in} + |v|^2\left( 1+st \right)
+ |R_1(\delta_s (a), \delta_{t}(b))|^2 \left(1+ \frac{1}{st} \right) \\
& \stackrel{\eqref{HS1}}{\leq} (s|a_1| + t|b_1|)^2 + (s|a_2| + t|b_2|)^2
- |\delta_s (a_1)||\delta_{t} (b_1)|
\left| \frac{\delta_s (a_1)}{|\delta_s (a_1)|} - \frac{\delta_{t} (b_1)}{|\delta_{t} (b_1)|} \right|^2 \\
& \hspace{1in} - \left( |\delta_s (a_2)|(|\delta_{t}(b_1)|+|\delta_{t}(b_2)|) + |\delta_{t}(b_2)|(|\delta_s (a_1)| + |\delta_s (a_2)|) \right)^2\\
& \hspace{1in} +  \left(1+ \frac{1}{st} \right) C_1^2 |\delta_s (a_1)|^2 |\delta_{t} (b_1)|^2 \left| \frac{\delta_s (a_1)}{|\delta_s (a_1)|} - \frac{\delta_{t} (b_1)}{|\delta_{t} (b_1)|} \right|^2
\end{align*}
Now, according to \eqref{NHNormBd}, we have
\begin{align*}
&-\left( |\delta_s (a_2)|(|\delta_{t}(b_1)|+|\delta_{t}(b_2)|) + |\delta_{t}(b_2)|(|\delta_s (a_1)| + |\delta_s (a_2)|) \right)^2 \\
&\hspace{1in} 
\leq \left( |\delta_s (a_1)| |\delta_{t} (b_1)| \left| \frac{\delta_s (a_1)}{|\delta_s (a_1)|} - \frac{\delta_{t} (b_1)}{|\delta_{t} (b_1)|} \right| \right)^2
-\alpha \, \Vert NH(\delta_{s}(a)\delta_{t}(b)) \Vert^{2r}
\end{align*}
and thus
\begin{align*}
|\delta_s (a) &\delta_{t}(b)|^2 \\
&\leq (s|a| + t|b|)^2 
- |\delta_s (a_1)||\delta_{t} (b_1)|
\left| \frac{\delta_s (a_1)}{|\delta_s (a_1)|} - \frac{\delta_{t} (b_1)}{|\delta_{t} (b_1)|} \right|^2 
\\
&\hspace{.5in} 
+  \left( \left(1+ \frac{1}{st} \right) C_1^2 + 1\right) |\delta_s (a_1)|^2 |\delta_{t} (b_1)|^2 \left| \frac{\delta_s (a_1)}{|\delta_s (a_1)|} - \frac{\delta_{t} (b_1)}{|\delta_{t} (b_1)|} \right|^2 
- \alpha \, \Vert NH(\delta_{s}(a)\delta_{t}(b)) \Vert^{2r} \\
&\leq (s|a| + t|b|)^2 
- \alpha \, \Vert NH(\delta_{s}(a)\delta_{t}(b)) \Vert^{2r}\\
&\hspace{.5in} 
+  \left(\left(  \left(st+ 1 \right) C_1^2 + st\right)|a_1||b_1| - 1 \right) |\delta_s (a_1)| |\delta_{t} (b_1)| \left| \frac{\delta_s (a_1)}{|\delta_s (a_1)|} - \frac{\delta_{t} (b_1)}{|\delta_{t} (b_1)|} \right|^2 \\
&\stackrel{\eqref{HS4}}{\leq} (s|a| + t|b|)^2 
- \alpha \, \Vert NH(\delta_{s}(a)\delta_{t}(b)) \Vert^{2r}
- \tfrac12 |\delta_s (a_1)| |\delta_{t} (b_1)| \left| \frac{\delta_s (a_1)}{|\delta_s (a_1)|} - \frac{\delta_{t} (b_1)}{|\delta_{t} (b_1)|} \right|^2\\
&\;\leq (s|a| + t|b|)^2 
- \alpha \, \Vert NH(\delta_{s}(a)\delta_{t}(b)) \Vert^{2r}
- d_{\mathbb{R}^n}(sa_1,\ell_{sa_1+tb_1})^2 
\end{align*}
by Lemma~\ref{height}.
Since $sa_1 = \frac{x_1}{\Vert x \Vert + \Vert y \Vert -A}$, 
$tb_1 = \frac{y_1}{\Vert x \Vert + \Vert y \Vert -A}$, 
and $\left| \delta_{s} \left( a \right) \delta_{t} \left( b \right) \right| = \left| \delta_{\frac{1}{\Vert x \Vert + \Vert y \Vert - A}}(xy) \right|$,
we have
\begin{align*}
|\delta_s (a) \delta_{t}(b)| - (s|a| + t|b|) 
&\leq \frac{-\alpha \Vert NH(\delta_{s}(a)\delta_{t}(b)) \Vert^{2r} - d_{\mathbb{R}^n}(sa_1,\ell_{sa_1+tb_1})^2}
{|\delta_s (a) \delta_t(b)| + (s|a| + t|b|)}\\
&=\frac{-  \alpha \left\Vert NH\left( \delta_{\frac{1}{\Vert x \Vert + \Vert y \Vert  - A}}(xy) \right) \right\Vert^{2r}
- \frac{d_{\mathbb{R}^n}(x_1,\ell_{x_1+y_1})^2 }{(\Vert x \Vert + \Vert y \Vert  - A)^2} }
{\left| \delta_s (a) \delta_{t}(b) \right| + (s|a| + t|b|)} \\
&=\frac{-  \alpha \frac{\left\Vert NH\left( xy \right) \right\Vert^{2r}}{(\Vert x \Vert + \Vert y \Vert - A)^{2r}}  
- \frac{d_{\mathbb{R}^n}(x_1,\ell_{x_1+y_1})^2 }{(\Vert x \Vert + \Vert y \Vert  - A)^2} }
{\left| \delta_s (a) \delta_{t}(b) \right| + (s|a| + t|b|)} = (*).
\end{align*}
We proved in particular above
that $| \delta_s (a) \delta_{t}(b) | \leq s|a|+t|b|$.
Since $|a|=|b|=\eta$, this gives $| \delta_s (a) \delta_{t}(b) | \leq 2\eta < 8 m^{-2r} / \eta$.
Thus
\begin{align*}
(*)
&\leq -\frac{\eta m^{2r}}{16 } \left(   \frac{\alpha \left\Vert NH\left( xy \right) \right\Vert^{2r}}{(\Vert x \Vert + \Vert y \Vert -A)^{2r}}  
+ \frac{d_{\mathbb{R}^n}(x_1,\ell_{x_1+y_1})^2 }{(\Vert x \Vert + \Vert y \Vert  -A)^2} \right) \\
&\leq -\frac{\eta m^{2r}}{16} \left(   \frac{\alpha \left\Vert NH\left( xy \right) \right\Vert^{2r}}{(\Vert x \Vert + \Vert y \Vert )^{2r-1}}  
+ \frac{d_{\mathbb{R}^n}(x_1,\ell_{x_1+y_1})^2 }{\Vert x \Vert + \Vert y \Vert  } \right) (\Vert x \Vert + \Vert y \Vert  -A)^{-1}
\end{align*}

Since $|a|=|b|=\eta$, the definitions of $s$ and $t$ give
$$
s|a| + t|b| 
= \frac{\Vert x \Vert}{\Vert x \Vert + \Vert y \Vert  - A}|a| + \frac{\Vert y \Vert}{\Vert x \Vert + \Vert y \Vert  - A} |b|
= \frac{\eta(\Vert x \Vert + \Vert y \Vert)}{\Vert x \Vert + \Vert y \Vert - A}.
$$
In other words,
$$
\left| \delta_{\frac{1}{\Vert x \Vert + \Vert y \Vert - A}}(xy) \right|
=
|\delta_s (a) \delta_{t}(b)| 
\leq
\eta \left( \frac{ \Vert x \Vert + \Vert y \Vert 
- \frac{m^{2r}}{16} \left( \frac{\alpha \left\Vert NH\left( xy \right) \right\Vert^{2r}}{(\Vert x \Vert + \Vert y \Vert)^{2r-1}} + \frac{d_{\mathbb{R}^n}(x_1,\ell_{x_1+y_1})^2 }{\Vert x \Vert + \Vert y \Vert} \right)}
{\Vert x \Vert + \Vert y \Vert-A} \right)
= \eta
$$
according to the definition of $A$.
Therefore, the definition of the HS norm gives
$$
\Vert xy \Vert \leq \Vert x \Vert + \Vert y \Vert 
- \frac{m^{2r}}{16} \left( \frac{\alpha \left\Vert NH\left( xy \right) \right\Vert^{2r}}{(\Vert x \Vert + \Vert y \Vert)^{2r-1}} + \frac{d_{\mathbb{R}^n}(x_1,\ell_{x_1+y_1})^2 }{\Vert x \Vert + \Vert y \Vert} \right).
$$

\end{proof}

\begin{corollary}
\label{TICor}
Fix $v,w \in \mathbb{G}$.
If
\begin{equation}
\label{CorAss}
\frac{m^{2r}}{4} \left( \frac{\alpha \left\Vert NH\left( w \right) \right\Vert^{2r}}{(d(0,v) + d(v,w))^{2r-1}} + \frac{d_{\mathbb{R}^n}(v_1,\ell_{w_1})^2 }{d(0,v) + d(v,w)} \right)
\leq \min \{ d(0,v), d(v,w) \},
\end{equation}
then 
\begin{equation}
\label{CorCon}
\frac{m^{2r}}{16} \left( \frac{\alpha \left\Vert NH\left( w \right) \right\Vert^{2r}}{(d(0,v) + d(v,w))^{2r-1}} + \frac{d_{\mathbb{R}^n}(v_1,\ell_{w_1})^2 }{d(0,v) + d(v,w)} \right)
\leq d(0,v) + d(v,w) - d(0,w).
\end{equation}
\end{corollary}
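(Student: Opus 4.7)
The plan is to derive the corollary as a direct substitution in Lemma~\ref{TI}, taking $x = v$ and $y = v^{-1}w$. The statement of the corollary is engineered so that with this choice the quantity $A$ from Lemma~\ref{TI} coincides with the left-hand side of \eqref{CorCon} and the hypothesis $4A \le \min\{\Vert x \Vert, \Vert y \Vert\}$ of Lemma~\ref{TI} becomes exactly \eqref{CorAss}. So the whole proof reduces to verifying that every ingredient on the right matches the corresponding ingredient on the left after the substitution.

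First I would record the easy matchings from left-invariance of the Hebisch--Sikora metric: $\Vert x \Vert = d(0,v)$ and $\Vert y \Vert = \Vert v^{-1}w \Vert = d(v,w)$. Then $xy = v \cdot v^{-1}w = w$, so $\Vert xy \Vert = d(0,w)$ and $NH(xy) = NH(w)$. The one identification requiring a (short) computation is $x_1 + y_1 = w_1$. This follows from the group product formula $ab = (a_1+b_1,\, a_2+b_2+R(a,b))$ recorded in Section~\ref{CarnotSec}: setting $a = v$ and $b = v^{-1}$ and using $v \cdot v^{-1} = 0$ forces $(v^{-1})_1 = -v_1$, and then $y_1 = (v^{-1})_1 + w_1 = w_1 - v_1$, so $x_1 + y_1 = w_1$. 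Consequently $\ell_{x_1+y_1} = \ell_{w_1}$ and $d_{\mathbb{R}^n}(x_1, \ell_{x_1+y_1}) = d_{\mathbb{R}^n}(v_1, \ell_{w_1})$.

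With these identifications substituted into the definition of $A$, the value of $A$ is exactly the left-hand side of \eqref{CorCon} and $4A$ is exactly the left-hand side of \eqref{CorAss}. Therefore \eqref{CorAss} is the hypothesis of Lemma~\ref{TI}, and its conclusion $\Vert x \Vert + \Vert y \Vert - \Vert xy \Vert \ge A$ translates to \eqref{CorCon}. There is no real obstacle in the argument; the only nontrivial observation is $(v^{-1})_1 = -v_1$, which is immediate from the fact that the remainder polynomial $R$ in the Baker--Campbell--Hausdorff product is supported in the non-horizontal layers.
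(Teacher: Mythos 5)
Your proposal is correct and is exactly the paper's proof: the paper's entire argument is "Write $x=v$ and $y=v^{-1}w$ in Lemma~\ref{TI}," and your write-up simply spells out the identifications ($\Vert x\Vert = d(0,v)$, $\Vert y\Vert = \Vert v^{-1}w\Vert = d(v,w)$, $xy = w$, and $x_1 + y_1 = w_1$ via $(v^{-1})_1 = -v_1$) that make that substitution legitimate.
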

\begin{proof}
Write $x=v$ and $y=v^{-1}w$ in Lemma~\ref{TI}.
\end{proof}

\subsection{Proof of Theorem~\ref{Goal}}

We are now ready to prove Theorem~\ref{Goal}.
This theorem controls the deviation of horizontal segments 
by the excess in a four point triangle inequality.
We restate it here for convenience.
Again, we have fixed $m = 2^{-217}$.
\begin{reptheorem}{Goal}
Suppose $a,z,v,w \in \mathbb{G}$ satisfy
$$
m \rho \leq \min \{ d(a,z), d(a,v), d(z,v), d(v,w) \}
$$
and
$$
\max \{ d(a,z), d(a,v), d(z,v), d(v,w), d(a,w) \} \leq \rho
$$
for some $\rho > 0$.
Then there is a constant $C_0 = C_0(\mathbb{G}) > 0$ so that
$$
\sup_{t \in [0,1]} d(L_{av}(t),L_{aw})^{2r^2} + \sup_{t \in [0,1]} d(L_{vw}(t),L_{aw})^{2r^2}
\leq C_0 \rho^{2r^2-1} \Delta
$$
where $\Delta := d(a,z) + d(z,v) + d(v,w) - d(a,w)$.
\end{reptheorem}

We may assume without loss of generality from here on out that $a=0$.
Indeed, the metric is left invariant, and horizontal segments commute with left multiplication in the following sense:
$$
c^{-1} L_{ab}(t) 
= c^{-1} a \delta_t (\tilde{\pi}(a^{-1}b))
= c^{-1} a \delta_t (\tilde{\pi}((c^{-1}a)^{-1}(c^{-1}b)))
=L_{(c^{-1}a)(c^{-1}b)}(t)
$$
for any $b,c \in \mathbb{G}$ and any $t \in [0,1]$.
We will first establish the important tools used in the proof of the theorem.

\begin{lemma}
Under the assumptions of Theorem~\ref{Goal}, we have
\begin{equation}
\label{Delta}
\rho^{-(2r-1)} \Vert NH(w) \Vert^{2r} + \rho^{-1} d_{\mathbb{R}^n}(v_1,\ell_{w_1})^2 \lesssim d(0,v) + d(v,w) - d(0,w)
\end{equation}
and
\begin{equation}
\label{Delta2}
\rho^{-(2r-1)} \Vert NH(v) \Vert^{2r} + \rho^{-1} d_{\mathbb{R}^n}(z_1,\ell_{v_1})^2 \lesssim d(0,z) + d(z,v) - d(0,v).
\end{equation}
\end{lemma}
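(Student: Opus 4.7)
The plan is to obtain both inequalities as direct applications of Corollary~\ref{TICor}, once with the pair $(v,w)$ and once with the pair $(z,v)$, after first verifying that the hypothesis \eqref{CorAss} is satisfied in each case. Since the statements are symmetric in form, I will focus on \eqref{Delta}; the proof of \eqref{Delta2} is identical with $(z,v)$ in place of $(v,w)$.

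To verify \eqref{CorAss} for $(v,w)$, I would first collect crude size bounds. From the triangle inequality and the fact that $\|\tilde\pi(p)\| \leq \|p\|$ proved just after Theorem~\ref{HSdef}, one has
\[
\|NH(w)\| = \|\tilde\pi(w)^{-1}w\| \leq \|\tilde\pi(w)\| + \|w\| \leq 2\|w\| \leq 2\rho.
\]
Similarly, since $v_1 \in \mathbb{R}^n \times \{0\}$ satisfies $|v_1| = \eta\|\tilde\pi(v)\|$ by \eqref{etafoot}, the distance from $v_1$ to the Euclidean segment $\ell_{w_1}$ is at most $|v_1| \leq \eta\|v\| \leq \eta\rho$. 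On the denominator side, the lower bound $m\rho \leq \min\{d(0,v),d(v,w)\}$ gives $d(0,v)+d(v,w) \geq 2m\rho$, while $d(0,v)+d(v,w) \leq 2\rho$.

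Plugging these estimates into the left-hand side of \eqref{CorAss}, I expect a bound of the form
\[
\frac{m^{2r}}{4}\!\left(\frac{\alpha (2\rho)^{2r}}{(2m\rho)^{2r-1}} + \frac{(\eta\rho)^2}{2m\rho}\right)
= \frac{m\alpha\rho}{2} + \frac{m^{2r-1}\eta^2\rho}{8},
\]
which is easily seen to be at most $m\rho \leq \min\{d(0,v),d(v,w)\}$ since $\alpha<1$ and $m^{2r-2}\eta^2 \leq 1$. Thus \eqref{CorAss} holds, and Corollary~\ref{TICor} yields the conclusion \eqref{CorCon}. Using the upper bound $d(0,v)+d(v,w) \leq 2\rho$ to replace the denominators on the left of \eqref{CorCon} with $\rho$ (absorbing the resulting constants into $\lesssim$) produces exactly \eqref{Delta}. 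The same argument applied to the pair $(z,v)$, using $d(z,v), d(0,z) \geq m\rho$ and $d(0,z)+d(z,v) \leq 2\rho$ in place of the analogous bounds, gives \eqref{Delta2}.

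The only place requiring any real care is the verification of \eqref{CorAss}, since this is where the specific choice of $\eta$ (via \eqref{HS3}, \eqref{HS4}) and $m$ enter. However, because the lower bound $m\rho$ on each of $d(0,v)$, $d(v,w)$, $d(z,v)$ is present in the hypotheses of Theorem~\ref{Goal}, the denominators in \eqref{CorAss} are comparable to $\rho$, and the numerators are controlled by trivial diameter estimates; the inequality then follows from the fact that $m,\eta<1$ with plenty of room to spare. No further tools beyond Corollary~\ref{TICor}, the triangle inequality, and the horizontal projection bound $\|\tilde\pi(p)\| \leq \|p\|$ are needed.
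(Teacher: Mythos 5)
Your proof is correct and takes essentially the same route as the paper: verify \eqref{CorAss} via crude diameter bounds on $\|NH(w)\|$ and $d_{\mathbb{R}^n}(v_1,\ell_{w_1})$ (you bound the latter by $|v_1|\leq\eta\rho$ whereas the paper uses $|v_1-w_1|\leq d(v,w)\leq\rho$, but both suffice), then invoke Corollary~\ref{TICor} and absorb the constants in $d(0,v)+d(v,w)\in[2m\rho,2\rho]$ into the $\lesssim$. The arithmetic checks out, and the symmetric argument for the pair $(z,v)$ giving \eqref{Delta2} is handled identically in the paper.
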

\begin{figure}[H]
\centering
\includegraphics[scale = 1]{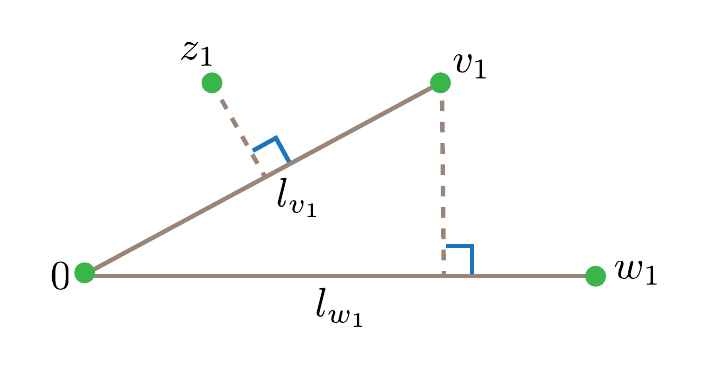}
\caption{}
\label{fig1}
\end{figure}
\begin{proof}
First, 
$\Vert NH(w) \Vert \leq \Vert \tilde{\pi}(w) \Vert + \Vert w \Vert \leq 2d(0,w) \leq 2\rho$, so 
$$
\frac{\alpha \left\Vert NH\left( w \right) \right\Vert^{2r}}{(d(0,v) + d(v,w))^{2r-1}}
\leq \frac{ (2\rho)^{2r}}{(2m\rho)^{2r-1}} 
\leq  2 m^{-2r} \min \{ d(0,v), d(v,w) \}.
$$
We also
have
$d_{\mathbb{R}^n}(v_1,\ell_{w_1}) \leq |v_1 - w_1| = \eta \Vert \tilde{\pi}(v^{-1}w) \Vert \leq \Vert v^{-1}w \Vert = d(v,w)$, so
\begin{align*}
\frac{d_{\mathbb{R}^n}(v_1,\ell_{w_1})^2}{d(0,v) + d(v,w)}
\leq \frac{ \rho^2}{2m\rho} 
&\leq  \frac{1}{2m^2} \min \{ d(0,v), d(v,w) \} 
< 2 m^{-2r} \min \{ d(0,v), d(v,w) \} .
\end{align*}
We have just shown that $v$ and $w$ satisfy the assumption \eqref{CorAss}.
Thus Corollary~\ref{TICor} gives \eqref{Delta}.
Identical arguments show that $z$ and $v$ satisfy \eqref{CorAss}
with $z$ in the place of $v$ and $v$ in the place of $w$.
This gives \eqref{Delta2} and completes the proof of the lemma.
\end{proof}

Consider the following from \cite[Lemma~3.8]{LiThesis}:
\begin{lemma}
\label{SegLem}
Fix a constant $M \geq 1$.
For any $0 < \omega < M$,
if $f,g:[0,1] \to \mathbb{G}$
are $\rho$-Lipschitz, constant speed, horizontal segments satisfying
$$
d(f(0),g(0)) \leq \omega \rho \quad \text{and} \quad d(f(1),g(1)) \leq \omega \rho,
$$
then $\sup_{t \in [0,1]} d(f(t),g(t)) \leq C' \omega^{1/r} \rho$
for some constant $C'=C'(\mathbb{G},M)>0$.
\end{lemma}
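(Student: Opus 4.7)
By left invariance I may assume $f(0)=0$, and by dilating by $\delta_{1/\rho}$ I may normalize $\rho=1$; both reductions preserve the horizontal-segment structure and the constant-speed parametrization. Writing $f(t)=\delta_t(v)$ and $g(t)=p\cdot\delta_t(w)$ with $v,w\in V_1$ horizontal and $\|v\|,\|w\|\leq 1$, $\|p\|\leq\omega$, the goal reduces to showing that
\[
\|u(t)\|\lesssim_M\omega^{1/r}\quad\text{where}\quad u(t):=f(t)^{-1}g(t)=\delta_t(-v)\,p\,\delta_t(w),
\]
subject to $\|u(0)\|=\|p\|\leq\omega$ and $\|u(1)\|\leq\omega$. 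The regime $\omega\in[1,M)$ is immediate from subadditivity of $\|\cdot\|$ (it gives $\|u(t)\|\leq 2+\omega\leq(2+M)\omega^{1/r}$ since $\omega^{1/r}\geq 1$), so I concentrate on $\omega<1$.

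The heart of the plan is a stratum-by-stratum analysis of $\log u(t)\in\mathfrak{g}$. By the Baker--Campbell--Hausdorff formula (Theorem~\ref{BCH}) applied to $u(t)=e^{-tv}\,e^{\log p}\,e^{tw}$, $\log u(t)$ is a finite sum of nested Lie brackets built from the three letters $-tv$, $\log p$, $tw$, and in particular it is a polynomial in $t$ valued in $\mathfrak{g}$. I will prove that the projection $(\log u(t))_i$ onto each stratum $V_i$ satisfies $|(\log u(t))_i|\lesssim\omega$ uniformly in $t\in[0,1]$. For $i=1$ this is easy: $(\log u(t))_1=(\log p)_1+t(w-v)$, and $\|u(0)\|\leq\omega$ gives $|(\log p)_1|\lesssim\omega$ while $\|u(1)\|\leq\omega$ then forces the crucial bound $|w-v|\lesssim\omega$.

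For $i\geq 2$, every summand in $(\log u(t))_i$ is an $i$-fold nested bracket in which some number $a\in\{0,\ldots,i\}$ of arguments are horizontal ($\pm tv$ or $\pm tw$) and the remaining arguments come from strata of $\log p$ whose gradings sum to $i-a$. If $a<i$, at least one argument comes from $\log p$; since $\|p\|\leq\omega<1$ gives $|(\log p)_j|\lesssim\omega^j$, multilinearity of the bracket bounds the contribution by $\omega^{i-a}\leq\omega$. The delicate case is $a=i$, where the bracket is multilinear in $i$ arguments drawn from $\{\pm v,\pm w\}$: any such bracket with all-$v$ or all-$w$ arguments vanishes (its innermost bracket is $[\pm v,\pm v]=0$ or $[\pm w,\pm w]=0$), so substituting $v=w+(v-w)$ and expanding multilinearly, every surviving term carries at least one factor of $(v-w)$ and is therefore of size $\lesssim|v-w|\lesssim\omega$. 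Combining the cases yields $|(\log u(t))_i|\lesssim\omega$ for all $i$ and all $t\in[0,1]$. Since on bounded subsets of $\mathbb{G}$ the Hebisch--Sikora norm is comparable to $\max_i|x_i|^{1/i}$ (from the definition of $\|\cdot\|$ via dilations together with \eqref{compact}), we conclude
\[
\|u(t)\|\lesssim_M\max_{1\leq i\leq r}\omega^{1/i}=\omega^{1/r}.
\]
The principal obstacle is the pure-horizontal case $a=i$: without both the antisymmetry cancellation and the first-stratum extraction of $|v-w|\lesssim\omega$, these brackets would contribute an uncontrolled $O(1)$ term and ruin the $\omega^{1/r}$ estimate.
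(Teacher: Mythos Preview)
Your proof is correct. The paper does not actually prove this lemma: it quotes the case $0<\omega<1$ from \cite[Lemma~3.8]{LiThesis} and only supplies the triangle-inequality extension to $1\leq\omega<M$ (which you also handle). So your argument is a genuine, self-contained replacement for the cited reference rather than a reworking of anything in this paper.

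Your approach---normalize, write $u(t)=\delta_t(-v)\,p\,\delta_t(w)$, expand $\log u(t)$ via BCH, and bound each stratum---is the natural one. The two decisive observations are exactly the ones you isolate: first, the $V_1$-component forces $|w-v|\lesssim\omega$; second, in the pure-horizontal brackets the substitution $v=w+(v-w)$ kills the only term not carrying a factor of $(v-w)$, since iterated brackets of parallel vectors vanish. One minor phrasing issue: you call each $V_i$-summand an ``$i$-fold nested bracket'', but when some arguments come from higher strata of $\log p$ the bracket has fewer than $i$ entries (e.g.\ $[(\log p)_2,tw]\in V_3$). This does not affect the estimate---your bound $\omega^{i-a}$ for the $a<i$ case is still correct, since the gradings of the $\log p$-arguments sum to $i-a\geq 1$---but the sentence would read more accurately as ``a nested bracket whose arguments have degrees summing to $i$''. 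Also, the equivalence $\|x\|\asymp\max_i|x_i|^{1/i}$ holds globally (both are homogeneous), not just on bounded sets; your conclusion $\|u(t)\|\lesssim\omega^{1/r}$ is valid.
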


In other words, 
as long as the endpoints of the horizontal segment $f$ are close enough to the endpoints of $g$,
then any point along $f$ is close to $g$ (up to a factor of a different power).
Note that the original lemma in \cite{LiThesis} is stated for $0 < \omega < 1$,
but the triangle inequality gives
\begin{align*}
d(f(t),g(t)) 
\leq d(f(t),f(0)) + d(f(0),g(0)) + d(g(0),g(t))
&\leq \rho + \omega \rho + \rho \\
&\leq 3\omega \rho
\leq (3 M^{1-\frac{1}{r}}) \omega^{1/r} \rho.
\end{align*}
when $1 \leq \omega < M$.
Lemma~\ref{SegLem} translates in our case to the following:
\begin{lemma}
\label{BoundLem}
Suppose the following bounds hold for some $C=C(\mathbb{G})>0$:
\begin{align}
d(L_v(1),L_w)^{2r} = d(\tilde{\pi}(v),L_w)^{2r} &\leq C\rho^{2r-1} \Delta, \label{i} \\
d(L_{vw}(0),L_w)^{2r} = d(v,L_w)^{2r} &\leq C\rho^{2r-1} \Delta, \label{ii} \\
d(L_{vw}(1),L_w)^{2r} = d(v \, \tilde{\pi}( v^{-1} w) ,L_w)^{2r} &\leq C\rho^{2r-1} \Delta. \label{iii}
\end{align}
Then 
$$
\sup_{t \in [0,1]} d(L_v(t),L_w)^{2r^2} + \sup_{t \in [0,1]} d(L_{vw}(t),L_w)^{2r^2}
\lesssim \rho^{2r^2-1} \Delta.
$$
\end{lemma}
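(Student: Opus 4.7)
The plan is to apply Lemma~\ref{SegLem} twice, once for each term in the conclusion, by comparing $L_v$ and $L_{vw}$ against carefully chosen sub-segments of $L_w$. The key preliminary observation is that $L_w$ is contained in the one-parameter abelian subgroup $\{(tw_1,0) : t \in \R\}$ of purely horizontal elements (along the direction $w_1$, the group product reduces to ordinary addition in the first layer because parallel first-layer vectors commute). Consequently, any sub-segment of $L_w$ is itself a constant-speed horizontal segment in the paper's sense $L_{pq}$ with $p,q \in L_w$, and its Lipschitz constant is at most that of $L_w$ itself, namely $d(0,\tilde\pi(w)) \leq d(0,w) \leq \rho$.

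For the first supremum, choose a nearest point $q \in L_w$ to $L_v(1) = \tilde\pi(v)$ and let $g := L_{0q}$, parametrized at constant speed on $[0,1]$. Set $\omega := (C\rho^{-1}\Delta)^{1/(2r)}$, so that by hypothesis \eqref{i} we have $d(L_v(1), g(1)) = d(\tilde\pi(v), q) \leq (C\rho^{2r-1}\Delta)^{1/(2r)} = \omega \rho$, while $d(L_v(0), g(0)) = 0$ trivially. Since $\Delta \leq 3\rho$, the value $\omega$ is bounded above by a constant $M = M(\mathbb{G})$, and Lemma~\ref{SegLem} (applied with this $M$ to the $\rho$-Lipschitz segments $L_v$ and $g$) gives $\sup_t d(L_v(t), g(t)) \leq C' \omega^{1/r} \rho$. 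Because $g(t) \in L_w$, raising to the $2r^2$-th power yields
\[
\sup_{t \in [0,1]} d(L_v(t), L_w)^{2r^2} \leq \sup_{t \in [0,1]} d(L_v(t), g(t))^{2r^2} \leq (C')^{2r^2} \omega^{2r} \rho^{2r^2} \lesssim \rho^{2r^2-1} \Delta.
\]

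The second supremum is handled by the same procedure: pick nearest points $p_0, p_1 \in L_w$ to $L_{vw}(0) = v$ and $L_{vw}(1) = v\,\tilde\pi(v^{-1}w)$, set $g := L_{p_0 p_1}$ at constant speed on $[0,1]$, and note that hypotheses \eqref{ii} and \eqref{iii} yield $d(L_{vw}(j), g(j)) \leq \omega \rho$ for $j = 0,1$ with the same $\omega$. A second application of Lemma~\ref{SegLem} then produces the matching bound on $\sup_t d(L_{vw}(t), L_w)^{2r^2}$, and adding the two estimates completes the proof. The only mildly delicate point in this scheme is the opening observation that the comparison segments $g$ qualify as genuine horizontal segments satisfying the hypotheses of Lemma~\ref{SegLem}; this is not a serious obstacle, but it is the one structural input the argument depends on, and it is precisely where the abelian structure along $w_1$ is used.
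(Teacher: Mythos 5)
Your proof is correct and follows essentially the same route as the paper: replace $L_w$ by the sub-segment between the nearest points (which is itself a constant-speed horizontal segment thanks to the colinearity/abelian structure in $\R^n\times\{0\}$), verify the $\rho$-Lipschitz and $\omega\leq M(\mathbb{G})$ hypotheses, then apply Lemma~\ref{SegLem} and raise to the $2r^2$-th power. The only superficial difference is that the paper only writes out the $L_{vw}$ case in detail (remarking that $L_v$ is ``similar and simpler''), while you carry out both.
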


\begin{proof}
Say $f = L_{vw}$ and $\hat{g} = L_w$.
(The case of $f = L_v$ is similar and simpler.)

According to \eqref{ii} and \eqref{iii}, we have
$$
d(f(0),\hat{g}(t_1)) \leq C^{\frac{1}{2r}} \rho^{\frac{2r-1}{2r}} \Delta^{\frac{1}{2r}} = \rho(C\rho^{-1}\Delta)^{\frac{1}{2r}}
\quad \text{and} \quad
d(f(1),\hat{g}(t_2)) 
\leq 
\rho(C\rho^{-1}\Delta)^{\frac{1}{2r}}
$$
for some $t_1,t_2 \in [0,1]$.
Note that $f$ and $\hat{g}$ are both $\rho$-Lipschitz.
Indeed, for any $s,t \in [0,1]$,
\begin{align*}
d(f(s),f(t)) = d(v*\delta_s(\tilde{\pi}(v^{-1}w)),v*\delta_t(\tilde{\pi}(v^{-1}w))) 
&= d((s(w_1-v_1),0),(t(w_1-v_1),0)) \\
&= \Vert (-t(w_1-v_1),0) * (s(w_1-v_1),0) \Vert \\
&= \Vert ((s-t)(w_1-v_1),0) \Vert \\
&= |s-t| \Vert \tilde{\pi}(v^{-1}w) \Vert  \\
&\leq |s-t| d(v,w) \leq |s-t| \rho.
\end{align*}
The fourth equality follows from the fact that 
the brackets in the BCH formula all vanish in this particular product since 
$\delta_s(\tilde{\pi}(v^{-1}w))$ and $\delta_t(\tilde{\pi}(v^{-1}w))$
are co-linear in $\mathbb{R}^n \times \{0\}$
and $[X,X]=0$.
Similarly, one may check that $\hat{g}$ is $\rho$-Lipschitz.

Now define $g:[0,1] \to \mathbb{G}$ so that $g(t) = \hat{g}(t(t_2-t_1) + t_1)$.
(We assume without loss of generality that $t_1 \leq t_2$.
If not, swap the roles of $t_1$ and $t_2$ in the definition of $g$.)
Then $g$ is $\rho$-Lipschitz, $g(0) = \hat{g}(t_1)$, $g(1) = \hat{g}(t_2)$, and
$$
g(t) 
= \delta_{t(t_2-t_1) + t_1}(\tilde{\pi}(w))
=\delta_{t_1}(\tilde{\pi}(w)) * \delta_t( \delta_{t_1}(\tilde{\pi}(w))^{-1} \delta_{t_2}(\tilde{\pi}(w)) ).
$$
(The last equality holds since the points are co-linear in $\mathbb{R}^n \times \{ 0 \}$ (as above).)
In other words, $g$ is the horizontal segment from $\delta_{t_1}(\tilde{\pi}(w))$ to $\delta_{t_2}(\tilde{\pi}(w))$.
The hypotheses of Lemma~\ref{SegLem} are satisfied with $\omega = (C\rho^{-1}\Delta)^{\frac{1}{2r}} \leq (3C)^{\frac{1}{2r}}$,
so 
$$
d(L_{vw}(t),L_w) \leq d(f(t),g(t)) \leq C' (\rho^{-1}\Delta)^{\frac{1}{2r^2}} \rho
$$
for some $C'=C'(\mathbb{G})>0$ and any $t \in [0,1]$.
Therefore
$$
d(L_{vw}(t),L_w)^{2r^2} \lesssim \rho^{2r^2 - 1} \Delta.
$$
\end{proof}

We now prove Theorem~\ref{Goal}.
Our main tool will be the application of inequalities \eqref{Delta} and \eqref{Delta2}
to prove \eqref{i}, \eqref{ii}, and \eqref{iii}.
We will show that the distance of each endpoint to the segment $L_w$
is controlled by the
distance of $\tilde{\pi}(v)$ to the segment in the horizontal layer
plus the nonhorizontal part of $v$.
\begin{proof}[Proof of Theorem~\ref{Goal}]
It suffices to prove \eqref{i}, \eqref{ii}, and \eqref{iii}.
We start with \eqref{i}. 
Note that $\tilde{\pi}(v) \in \mathbb{R}^n \times \{0\}$ and $L_w \subset \mathbb{R}^n \times \{0\}$. 
Say $p_v \in L_w$ is the nearest point in $L_w$ to $\tilde{\pi}(v)$ (in the Euclidean norm).
Therefore, since $\delta_s(p) = sp$ for any $p \in \mathbb{R}^n \times \{ 0 \}$, we have
\begin{align*}
\frac{d(\tilde{\pi}(v),p_v)^{2r} }{\rho^{2r-1}} 
= \rho \left( \frac{d(\tilde{\pi}(v),p_v)}{\rho} \right)^{2r}
= \rho \, d(\delta_{1/\rho} (\tilde{\pi}(v)),\delta_{1/\rho} (p_v))^{2r}
&\lesssim \rho \, |\delta_{1/\rho} (\tilde{\pi}(v)) - \delta_{1/\rho} (p_v)|^{2} \\
&= \rho^{-1} d_{\mathbb{R}^n}(v_1, \ell_{w_1})^{2}
\lesssim \Delta
\end{align*}
according to \eqref{compact} and \eqref{Delta}.
(Note that the constant from \eqref{compact} here depends only on $\mathbb{G}$ 
since $\delta_{1/\rho} (\tilde{\pi}(v))$ and $\delta_{1/\rho} (p_v)$ lie in the unit ball of $\mathbb{G}$).
This gives \eqref{i}. 

Let us now prove \eqref{ii}.
Arguing as above using \eqref{Delta} and \eqref{Delta2} gives
\begin{align*}
\frac{d(v,L_w)^{2r} }{\rho^{2r-1}} 
&\lesssim
\frac{d(v,\tilde{\pi}(v))^{2r} }{\rho^{2r-1}} 
+
\frac{d(\tilde{\pi}(v),p_v)^{2r} }{\rho^{2r-1}} \\
&=
\frac{\Vert NH(v) \Vert^{2r} }{\rho^{2r-1}} 
+
\frac{d(\tilde{\pi}(v),p_v)^{2r} }{\rho^{2r-1}} \\
&\lesssim
\left[ \frac{\Vert NH(v) \Vert^{2r} }{\rho^{2r-1}} 
+
\frac{d_{\mathbb{R}^n}(z_1, \ell_{v_1})^{2}}{\rho} \right]
+
\left[ \frac{\Vert NH(w) \Vert^{2r} }{\rho^{2r-1}} 
+
\frac{d_{\mathbb{R}^n}(v_1, \ell_{w_1})^{2} }{\rho} \right] \\
&\lesssim 
[d(0,z) + d(z,v) - d(0,v)] + [d(0,v) + d(v,w) - d(0,w)] 
= \Delta.
\end{align*}

We will now prove \eqref{iii}.
Writing $v = \tilde{\pi}(v) NH(v)$ gives
\begin{align*}
\rho^{-1} d(v\tilde{\pi}(v^{-1}w),\tilde{\pi}(w)) 
&= \rho^{-1} d(\tilde{\pi}(v) NH(v) \tilde{\pi}(v^{-1}w),\tilde{\pi}(w)) \\
&= \Vert \delta_{1/ \rho}(\tilde{\pi}(v^{-1}w) \tilde{\pi}(w)^{-1} \tilde{\pi}(v)) \delta_{1 / \rho} (NH(v)) \Vert.
\end{align*}
Since the HS norm is invariant under rotations of $\mathbb{R}^n \times \{ 0 \} \subset \mathbb{R}^N$
which fix the other $N-n$ coordinates,
we may assume without loss of generality that the segment $L_w$ lies along the $x_1$ axis in $\mathbb{R}^N$.
Under this assumption, we have 
$$
\tilde{\pi}(w) = (w_1,0) = (w_1^1,0,0) 
\quad \text{and} \quad
\tilde{\pi}(v) = (v_1,0) = (v_1^1,v_1^2,0)
$$
where $w_1^1,v_1^1 \in \mathbb{R}$ and $v_1^2 \in \mathbb{R}^{n-1}$.
In particular, it follows that $d_{\mathbb{R}^n}(v_1,\ell_{w_1}) = |v_1^2|$.
We can thus write
$$
\delta_{1/ \rho}(\tilde{\pi}(v^{-1}w) \tilde{\pi}(w)^{-1} \tilde{\pi}(v))
= \delta_{1/ \rho}( (w_1^1 - v_1^1, -v_1^2, 0)(-w_1^1,0,0)(v_1^1, v_1^2,0))
= (0, p_2 )
$$
where $p_2$ is some BCH polynomial.
The bound \eqref{compact} gives
$$
\frac{d(v\tilde{\pi}(v^{-1}w),\tilde{\pi}(w))^{2r}}{\rho^{2r-1}}
\lesssim \rho \Vert \delta_{1/\rho}(NH(v)) \Vert^{2r} + \rho \Vert (0,p_2) \Vert^{2r}
\lesssim \rho \Vert \delta_{1/\rho}(NH(v)) \Vert^{2r} + \rho | (0,p_2) |^{2}.
$$
(Again, the constant from \eqref{compact} here depends only on $\mathbb{G}$ 
since $(0,p_2) \in B(0,3)$).
Arguing as in the proof of Lemma~\ref{NH}, 
we may see that the polynomial $p_2$ is a finite sum of constant multiples of 
terms of the form 
\begin{equation}
\label{brackets2}
[Z_1,[Z_2,\cdots,[Z_{k-2},[Z_{k-1},Z_k]]\cdots]]
\end{equation}
where each $Z_i$ is either $\rho^{-1} w_1^1$, $\rho^{-1}v_1^1$, or $\rho^{-1}v_1^2$.
(Note the abuse of notation here in which we identify $(w_1^1,0,0)$, $(v_1^1,0,0)$ and $(0,v_1^2,0)$ 
with their associated vectors in the first layer of the Lie algebra.)
Note that the point $(w_1^1,0,0)$ is simply a dilation of the point $(v_1^1,0,0)$.
Hence the associated vector $w_1^1$ is a constant multiple of the vector $v_1^1$.
That is, $[w_1^1,v_1^1] = 0$.
In particular, it follows that each term of the form \eqref{brackets2} can only end with
$$
[\rho^{-1} w_1^1,\rho^{-1} v_1^2] \quad
\text{ or } \quad [\rho^{-1} v_1^1,\rho^{-1} v_1^2].
$$
Since $\eta < 1$, we have
$\rho^{-1}|w_1^1| = \rho^{-1} \eta \Vert \tilde{\pi}(w) \Vert \leq \rho^{-1}d(0, w) \leq 1$,
and similarly we get
$\rho^{-1}|v_1^1| \leq 1$
and $\rho^{-1}|v_1^2| \leq 1$.
Therefore,
$$
|[Z_1,[Z_2,\cdots,[Z_{k-2},[Z_{k-1},Z_k]]\cdots]]| \leq \prod_{i=1}^{k} |Z_i| \leq \rho^{-1}|v_1^2| = \rho^{-1}d_{\mathbb{R}^n}(v_1,\ell_{w_1})
$$
for each term of the form \eqref{brackets2}.
Hence we may conclude
$$
\frac{d(v\tilde{\pi}(v^{-1}w),L_w)^{2r}}{\rho^{2r-1}}
\lesssim \rho \Vert \delta_{1/\rho}(NH(v)) \Vert^{2r} + \rho ( \rho^{-1}d_{\mathbb{R}^n}(v_1,\ell_{w_1}) )^{2}
= \frac{\Vert NH(v) \Vert^{2r}}{\rho^{2r-1}} + \frac{d_{\mathbb{R}^n}(v_1,\ell_{w_1})^{2}}{\rho}
$$
which, as in the proof of \eqref{ii}, is bounded by a constant multiple of $\Delta$.
This concludes the proof of \eqref{iii}. 
We have therefore proven the hypotheses of Lemma~\ref{BoundLem} and thus the theorem.
\end{proof}

\section{Using Theorem~\ref{Goal} to prove Theorem~\ref{TSP}}
\label{TSTSec}

We will now apply the estimates from Section~\ref{curvatureSec} 
to prove the Traveling Salesman Theorem (Theorem~\ref{TSP}) in $\mathbb{G}$.
In this section,
we will follow the proof of the Traveling Salesman Theorem in the Heisenberg group \cite[Theorem I]{LiSchul}
given in \cite{LiSchul}.
Many of the arguments therein hold in any metric space.
As such, this section will provide a rough outline of the proof of Theorem~\ref{TSP}.
Full proofs will be provided for the results whose proofs differ significantly from those in \cite{LiSchul}.

\subsection{Preliminaries: arcs}

First, we will recall the notation from \cite{LiSchul}.
Fix a connected $\Gamma \subset \mathbb{G}$ with $\mathcal{H}^1(\Gamma) < \infty$
and a 1-Lipschitz, arc-length parameterization $\gamma:\mathbb{T} \to \Gamma$
(where $\mathbb{T}$ is a circle in $\mathbb{R}^2$).
Such a parameterization exists according to Lemma~2.10 in \cite{LiSchul}.
Orient $\mathbb{T}$ so that we may discuss a particular direction of flow along $\Gamma$.
Since $\beta_{\Gamma}$ is scale invariant (i.e. $\beta_{\delta_{\lambda}(\Gamma)}(\delta_{\lambda}(B)) = \beta_{\Gamma}(B)$),
we may assume without loss of generality that $\diam(\Gamma) = 1$.

An \emph{arc} $\tau$ in $\gamma$ is 
the restriction $\gamma|_{I_{\tau}}$
where $I_{\tau} = [a(\tau),b(\tau)]$ is some interval in $\mathbb{T}$ compatible with the orientation chosen above.
Given two arcs $\tau$ and $\zeta$, the notation $\zeta \subset \tau$ means $I_{\zeta} \subset I_{\tau}$,
and we will write $\diam(\tau)$ to represent $\diam(\tau(I_{\tau}))$.

For any $L>0$, we define a \emph{prefiltration} $\mathcal{F}^0 = \bigcup_{n} \mathcal{F}_n^0$ to be 
a collection of arcs in $\gamma$ satisfying the following three conditions for any $n \in \mathbb{N}$:
\begin{enumerate}
\item For $\tau \in \mathcal{F}_n^0$, we have $L2^{-100n} \leq \diam(\tau) <L2^{-100n+3}$.
\item The domains of any two distinct arcs in $\mathcal{F}_n^0$ are disjoint in $\mathbb{T}$.
\item For any $k \in \mathbb{N}$, if the domains of the arcs $\zeta \in \mathcal{F}_{n+k}^0$ and $\tau \in \mathcal{F}_n^0$ intersect non-trivially, then $\zeta \subset \tau$.
\end{enumerate}
According to \cite[Lemma 2.13]{LiSchul},
given any prefiltration $\mathcal{F}^0$,
one may construct a \emph{filtration} $\mathcal{F} = \cup_{n} \mathcal{F}_n$
generated by $\mathcal{F}^0$
i.e. a collection of arcs in $\gamma$ satisfying the following for any $n \in \mathbb{N}$:
\begin{enumerate}
\item Given $\zeta \in \mathcal{F}_{n+1}$, there is a unique $\tau \in \mathcal{F}_n$ such that $\zeta \subset \tau$.
\item For $\tau \in \mathcal{F}_n$, we have $L 2^{-100n-10} \leq \diam(\tau)<L2^{-100n+4}$.
\item The domains of any two distinct arcs in $\mathcal{F}_n$ are either disjoint or intersect in (one or both of) their endpoints.
\item $\bigcup_{\tau \in \mathcal{F}_n} \tau = \mathbb{T}$.
\item For each arc $\tau^0 \in \mathcal{F}_n^0$, there is a unique arc $\tau \in \mathcal{F}_n$ such that $\tau^0 \subset \tau$. Moreover, if $I_0$ and $I$ are the domains of $\tau_0$ and $\tau$ respectively, then the image of each of the connected components of $I \setminus I_0$ under $\gamma$ has diameter less than $L 2^{-100n-10}$.
\end{enumerate}

\subsection{Preliminaries: balls}

For each $n \in \mathbb{Z}$, choose a $2^{-n}$ separated net $\mathbb{X}_n$ of $\Gamma$
(i.e. a set $\mathbb{X}_n \subset \Gamma$ such that $d(x,y) \geq 2^{-n}$ for any $x,y \in \mathbb{X}_n$, 
and such that, for any $z \in \Gamma$, there is some $x \in \mathbb{X}$ with $d(z,x) < 2^{-n}$).
Define a \emph{multiresolution} of $\Gamma$ as follows:
$$
\hat{\mathcal{G}} = \{ B(x,10/2^n) \, : \, x \in \mathbb{X}_n \text{ and } n \in \mathbb{Z} \}.
$$
We will use \cite[Lemma 2.6]{LiSchul} 
(which holds here with the same proof since $\mathbb{G}$ is $Q$-regular)
to prove the Traveling Salesman Theorem (Theorem~\ref{TSP}) 
by establishing the bound
\begin{equation}
\label{TSPdiscrete}
\sum_{B \in \hat{\mathcal{G}}} \beta_{\Gamma}(B)^{2r^2} \diam(B) \leq C \mathcal{H}^1(\Gamma)
\end{equation}
where $C$ depends only on $\mathbb{G}$.
As in \cite[Lemma 2.9]{LiSchul},
it suffices to prove inequality \eqref{TSPdiscrete} 
when the sum is taken over the family $\mathcal{G}$ of balls in $\hat{\mathcal{G}}$ with radius less than 1/100. 

For a ball $B=B(x,r)$, write $\alpha B = B(x,\alpha r)$.
Fix an integer $\kappa > 3$.
Define $\mathcal{B}$ to be
the collection of balls $2B$ where $B \in \mathcal{G}$.
According to \cite[Lemma 2.11]{LiSchul},
since $\mathbb{G}$ is doubling,
there is a constant $D=D(\kappa)>0$ 
and a decomposition $\mathcal{B} = \cup_{i=1}^{D} \mathcal{B}_i$
into pairwise disjoint families of balls satisfying the following for each $i$:
\begin{enumerate}
\item if $B_1,B_2 \in \mathcal{B}_i$ have the same radius $r$, then $d(B_1,B_2) > \kappa r$. 
\item for any $B_1,B_2 \in \mathcal{B}_i$, the ratio of their radii equals $2^{100j}$ for some $j \in \mathbb{Z}$.
\end{enumerate}

Fix $i \in \{1,\dots,D\}$.
From each ball $B \in \mathcal{G}$ with $2B \in \mathcal{B}_i$, 
we may construct a set $Q(B)$ (called a \emph{cube})
so that the family $\Delta(\mathcal{B},i)$ of cubes constructed from double-balls in $\mathcal{B}_i$
satisfies the following (see \cite[Lemma 2.12]{LiSchul}):
\begin{enumerate}
\item $2B \subset Q(B) \subset (1+2^{-98})2B$.
\item Fix $2B,2B' \in \mathcal{B}_i$. If $Q(B) \cap Q(B') \neq \emptyset$ and the radius of $B$ is larger than the radius of $B'$, then $Q(B') \subset Q(B)$.
\item Fix balls $2B,2B' \in \mathcal{B}_i$ of equal radius. Then $d(Q(B),Q(B')) > (\kappa - 1) r$. 
\end{enumerate}
Given any cube $Q(B) \in \Delta(\mathcal{B},i)$, define
$$
\Lambda(Q(B)) = \{ \tau = \gamma|_I \, : \, I \text{ is a connected component of } \gamma^{-1}(\Gamma \cap Q(B)) \text{ and } \gamma(I) \cap B \neq \emptyset \}. 
$$ 
These are the arcs of $\gamma$ inside $Q(B)$ that meet $B$.
According to \cite[Lemma 2.17]{LiSchul},
the collection of arcs $\mathcal{F}^{0,i} = \bigcup_{Q(B) \in \Delta(\mathcal{B},i)} \Lambda(Q(B))$
is a prefiltration for some $L_i >0$.
As discussed above, this induces a filtration $\mathcal{F}^i$.
In particular, for each $\tau^0 \in \mathcal{F}^{0,i}$,
there is a unique $\tau \in \mathcal{F}^{i}$ with $\tau^0 \subset \tau$.
We can therefore define the collection of extensions of arcs in $\Lambda(Q(B))$ as
$$
\Lambda'(Q(B)) = \{ \tau \in \mathcal{F}^{i} \, : \, \tau \supset \tau^0 \text{ and } \tau^0 \in \Lambda(Q(B)) \}
$$
for each cube $Q(B) \in \Delta(\mathcal{B},i)$.

For any arc $\tau$ of $\gamma$ with domain $I_{\tau}$, write $L_{\tau} = L_{a(\tau)b(\tau)}$ and define 
$$
\beta(\tau) 
= \sup_{t \in I_{\tau}} \frac{d(\gamma(t),L_{a(\tau)b(\tau)})}{\diam(\tau)}
= \sup_{p \in \tau} \frac{d(p,L_\tau)}{\diam(\tau)}.
$$
As in \cite{LiSchul}, write $\mathcal{G} = \mathcal{G}_1 \cup \mathcal{G}_2$ where
$$
\mathcal{G}_1 = \{ B \in \mathcal{G} \, : \, \text{there is } \tau \in \Lambda'(Q(B)) \text{ such that } \beta(\tau) \geq 10^{-10} \beta_{\Gamma}(B) \}
$$
and
$$
\mathcal{G}_2 = \{ B \in \mathcal{G} \, : \, \beta(\tau) < 10^{-10} \beta_{\Gamma}(B) \text{ for all } \tau \in \Lambda'(Q(B)) \}.
$$

\subsection{Non-flat balls}
In this section, we prove 
\begin{proposition}
\label{TSPdiscreteG1}
$$
\sum_{B \in \mathcal{G}_1} \beta_{\Gamma}(B)^{2r^2} \diam(B) \lesssim \mathcal{H}^1(\Gamma).
$$
\end{proposition}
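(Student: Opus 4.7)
The plan is to follow the Heisenberg argument of \cite{LiSchul} while substituting Theorem~\ref{Goal} for the step-$2$ curvature inequality used there. The reduction converts the sum over balls in $\mathcal{G}_1$ into a sum over arcs in the filtrations $\mathcal{F}^i$, and then controls the total arc contribution by $\mathcal{H}^1(\Gamma)$ via a telescoping argument.

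For each $B \in \mathcal{G}_1$, the defining property produces an arc $\tau_B \in \Lambda'(Q(B))$ with $\beta(\tau_B) \geq 10^{-10}\beta_{\Gamma}(B)$, and this arc lies in the filtration $\mathcal{F}^i$ where $i$ is the unique index with $2B \in \mathcal{B}_i$. Since $\diam(\tau_B) \approx \diam(B)$ from the filtration construction, and writing $\mathcal{G}_1 = \bigsqcup_{i=1}^D \mathcal{G}_1^i$ according to the $\mathcal{B}_i$-index, matters reduce to bounding, for each fixed $i$, the arc sum $\sum_{\tau \in \mathcal{F}^i} \beta(\tau)^{2r^2} \diam(\tau)$ by $\mathcal{H}^1(\Gamma)$.

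The key new ingredient is the arc-wise estimate
$$
\beta(\tau)^{2r^2} \diam(\tau) \lesssim \delta_\tau,
$$
where $\delta_\tau := \sum_{\zeta} d(a(\zeta), b(\zeta)) - d(a(\tau), b(\tau))$ summed over children $\zeta$ of $\tau$ in $\mathcal{F}^i$ measures the ``excess length'' gained at scale $\diam(\tau)$ by one refinement of the polygonal inscription. To prove it, let $a := \gamma(a(\tau))$ and $w := \gamma(b(\tau))$, and choose $v = \gamma(t_v)$ realizing $d(v, L_\tau) \geq \tfrac{1}{2}\beta(\tau)\diam(\tau)$, taken to be a child-arc endpoint; then pick another child-arc endpoint $z$ on the sub-arc from $a$ to $v$. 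Applying Theorem~\ref{Goal} to the quadruple $(a,z,v,w)$ with $\rho \approx \diam(\tau)$, combined with \eqref{Delta2} to control the gap $\|NH(a^{-1}v)\|$ between $v$ and $L_{av}(1)$, yields $d(v,L_\tau)^{2r^2} \lesssim \diam(\tau)^{2r^2-1} \delta_\tau$; rearrangement gives the arc-wise estimate. The telescoping $\sum_{\tau \in \mathcal{F}^i} \delta_\tau \leq \mathcal{H}^1(\Gamma)$ is then elementary: the child-endpoint distance sums are monotonically non-decreasing in refinement level and bounded above by $\mathcal{H}^1(\Gamma)$.

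The principal obstacle is verifying the separation hypothesis $m\rho \leq \min\{d(a,z), d(a,v), d(z,v), d(v,w)\}$ of Theorem~\ref{Goal} for the chosen quadruple. The bound $d(a,v) \geq \tfrac{1}{2}\beta(\tau)\diam(\tau)$ is automatic because $a \in L_\tau$, but the other three distances are not directly controlled -- in particular, when $\beta(\tau)$ is small or $v$ lies near an endpoint of $\tau$, child-arc endpoints may cluster. Following the blueprint of \cite{LiSchul}, this is handled by a careful choice of which child endpoints play the roles of $z$ and $v$ (exploiting the dyadic scaling $\diam(\zeta) \geq 2^{-100}\diam(\tau)$ guaranteed by the filtration) together with a case analysis on the position of $v$ within the polygonal inscription.
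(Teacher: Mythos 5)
There is a genuine gap in your proposal: the claimed arc-wise estimate $\beta(\tau)^{2r^2}\diam(\tau) \lesssim \delta_\tau$ (with $\delta_\tau$ the one-level excess of inscribed lengths) is false and cannot be salvaged by choosing $v$ cleverly. The quantity $\beta(\tau)\diam(\tau) = \sup_{p\in\tau}d(p,L_\tau)$ is a supremum over \emph{all} points of the arc, including points deep inside sub-sub-arcs, whereas $\delta_\tau$ only detects the deviation visible at one refinement level. A curve that is nearly straight at the scale of $\tau$'s immediate children but bends sharply several generations down has $\delta_\tau$ arbitrarily small while $\beta(\tau)$ stays bounded below. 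Relatedly, your step ``choose $v$ realizing $d(v,L_\tau)\geq \tfrac12\beta(\tau)\diam(\tau)$, taken to be a child-arc endpoint'' is not possible in general: the supremizer $\gamma(t_v)$ need not be (nor need be near) any child-arc endpoint.

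What the paper actually proves at the arc level is a bound on a different quantity, $d_\tau := \max_{\tau'\in\mathcal{F}_{\tau,1}}\sup_{z\in L_{\tau'}} d(z,L_\tau)$, which only involves the \emph{segments} joining child-arc endpoints, not the curve itself. The bound is $d_\tau^{2r^2}/\diam(\tau)^{2r^2-1}\lesssim \big(\sum_{\tau'\in\mathcal{F}_{\tau,3}} d(\gamma(a(\tau')),\gamma(b(\tau')))\big) - d(\gamma(a(\tau)),\gamma(b(\tau)))$, using the three-level excess to make Theorem~\ref{Goal}'s separation hypotheses verifiable via the case split on \eqref{firstBound}/\eqref{newBound} and the auxiliary points $\hat\tau, \tilde\tau$. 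To pass from $d_\tau$ to $\beta(\tau)$, the paper invokes \cite[Lemma 3.6]{LiSchul} (which bounds $\beta(\tau)$ by a geometric series $\sum_k 2^{-100k} d_{\tau_k}/\diam(\tau_k)$ over all descendant levels) together with Minkowski's inequality in $\ell^{2r^2}$; this multi-scale summation is the step your proposal omits, and it is exactly what repairs the failure of the one-level estimate. Your identification of the separation hypothesis as the obstacle to applying Theorem~\ref{Goal}, and your suggestion to handle it via the dyadic diameter bounds and a case analysis, is correct and matches the paper's strategy; the issue is the preceding reduction, not the verification of separation.
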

This is half of the estimate \eqref{TSPdiscrete} (and thus half of the proof of Theorem~\ref{TSP}).
We first need to introduce some notation.
There are $D$ different filtrations of $\gamma$ to consider, but we will treat them individually.
Fix $i \in \{1,\dots,D\}$ and write $\mathcal{F} := \mathcal{F}^i$.
Recall that $\mathcal{F} = \bigcup_k \mathcal{F}_k$
by the definition of a filtration.
Given $\tau \in \mathcal{F}_k$ and $j \in \mathbb{N}$,
write 
$$
\mathcal{F}_{\tau,j} = \{ \tau' \in \mathcal{F}_{k+j} \, : \, \tau' \subset \tau \}.
$$
This is the collection of arcs $j$ layers lower in the filtration which are contained in $\tau$.
Define
$$
d_{\tau} = \max_{\tau' \in \mathcal{F}_{\tau,1}} \sup_{z \in L_{\tau'}} d(z,L_{\tau})
\quad
\text{for any }
\tau \in \mathcal{F}.
$$
According to \cite[Lemma 3.4]{LiSchul},
$d_{\tau} \leq 2 \diam(\tau)$.
We now prove the following version of Lemma~3.5 in \cite{LiSchul}.
This is the first place in this section where our proof differs significantly from the arguments in \cite{LiSchul},
so details are included.
In particular, it is in the proof of this lemma that we use the curvature bound from Theorem~\ref{Goal}.

\begin{lemma}
For any $\tau \in \mathcal{F}$, we have
\begin{equation}
\label{NewLem}
\frac{d_{\tau}^{2r^2}}{\diam(\tau)^{2r^2-1}}
\leq 
C'' \left( \left( \sum_{\tau' \in \mathcal{F}_{\tau,3}} d(\gamma(a(\tau')),\gamma(b(\tau'))) \right) - d(\gamma(a(\tau)),\gamma(b(\tau))) \right)
\end{equation}
for some $C'' = C''(\mathbb{G}) >0$.
\end{lemma}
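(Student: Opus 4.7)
The plan is to apply Theorem~\ref{Goal} to a 4-chain $a,z,v,w$ along $\gamma$, with $a := \gamma(a(\tau))$ and $w := \gamma(b(\tau))$ so that $L_{aw} = L_\tau$. Let $\tau_1 \in \mathcal{F}_{\tau,1}$ be an arc achieving the supremum defining $d_\tau$, and denote its endpoints in $\mathbb{G}$ by $p = \gamma(a(\tau_1))$ and $q = \gamma(b(\tau_1))$. I would select the intermediate points $z$ and $v$ from among the endpoints of arcs in $\mathcal{F}_{\tau,2}$ lying between $a$ and $w$, arranging matters so that one of $z,v$ coincides with $p$ or $q$ and so that all four distances $d(a,z)$, $d(a,v)$, $d(z,v)$, $d(v,w)$ lie in the window $[m\,\diam(\tau),\diam(\tau)]$. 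The feasibility of such a selection uses that $m = 2^{-217}$ while the filtration diameter bounds force $\diam(\tau_1) \geq 2^{-114}\diam(\tau) \gg m\,\diam(\tau)$, so there is plenty of room between consecutive level-$2$ endpoints to place a well-separated chain.

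Applied to this chain, Theorem~\ref{Goal} yields
\[
\sup_{t \in [0,1]} d(L_{av}(t), L_\tau)^{2r^2} + \sup_{t \in [0,1]} d(L_{vw}(t), L_\tau)^{2r^2} \leq C_0\,\diam(\tau)^{2r^2-1}\,\Delta,
\]
where $\Delta = d(a,z) + d(z,v) + d(v,w) - d(a,w)$. Since $L_{vw}(0) = v$, specializing gives in particular $d(v, L_\tau)^{2r^2} \leq C_0\,\diam(\tau)^{2r^2-1}\,\Delta$; running the construction once with $v = p$ and once with $v = q$ produces endpoint control for the horizontal segment $L_{\tau_1} = L_{pq}$. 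The excess $\Delta$ is bounded by the right-hand side of \eqref{NewLem} by telescoping down to $\mathcal{F}_{\tau,3}$: each of $d(a,z)$, $d(z,v)$, $d(v,w)$ is a chord of a subarc built from arcs of $\mathcal{F}_{\tau,3}$, so by the triangle inequality it is dominated by the sum of chord lengths of its refining level-$3$ arcs. Summing these three bounds and subtracting $d(a,w) = d(\gamma(a(\tau)),\gamma(b(\tau)))$ gives exactly the right-hand side of \eqref{NewLem}.

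To promote endpoint control of $L_{\tau_1}$ to the uniform bound appearing in $d_\tau$, I would compare $L_{\tau_1} = L_{pq}$ with the horizontal segment $L_{p^*q^*}$, where $p^*,q^* \in L_\tau$ are nearest-point projections of $p,q$. The latter is a subsegment of $L_\tau$, and both segments have Lipschitz constant comparable to $\diam(\tau_1)$, so Lemma~\ref{SegLem} (in a suitably reparametrized form) converts the endpoint deviations $d(p,p^*)$, $d(q,q^*)$ into a uniform bound on $\sup_{z \in L_{\tau_1}} d(z,L_\tau)$. Raising to the power $2r^2$ and dividing by $\diam(\tau)^{2r^2-1}$ recovers the left-hand side of \eqref{NewLem}.

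The main obstacle will be handling the configuration in which $\tau_1$ sits at an endpoint of $\tau$, placing one of $p,q$ within $m\,\diam(\tau)$ of $a$ or $w$ and threatening the separation hypothesis of Theorem~\ref{Goal}. In this case I would select $v$ to be the endpoint of $\tau_1$ farther from both $a$ and $w$, invoking the gap $\diam(\tau_1) \gtrsim 2^{-114}\diam(\tau) \gg m\,\diam(\tau)$ to produce the needed distance lower bounds. A secondary subtlety arises if $\gamma|_{I_\tau}$ is non-injective enough that $d(\gamma(a(\tau)),\gamma(b(\tau)))$ is much smaller than $\diam(\tau)$; here the fix is to replace $\tau$ with the underlying prefiltration arc $\tau^0 \in \mathcal{F}_n^0$ (which by filtration property (5) sits inside $\tau$) and whose diameter $\gtrsim L\,2^{-100n}$ gives a robust scale to work with.
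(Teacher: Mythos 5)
Your plan breaks down at the step where you promote endpoint control of $L_{\tau_1}$ to a uniform bound via Lemma~\ref{SegLem}. With $a = \gamma(a(\tau))$, $w = \gamma(b(\tau))$, the two segments Theorem~\ref{Goal} controls are $L_{av}$ and $L_{vw}$, and neither of these equals $L_{\tau_1}=L_{pq}$ when $v$ is interior to $\tau$ (with $w = \gamma(b(\tau))$, $L_{vw}$ is the horizontal segment from $v$ toward $\gamma(b(\tau))$, not toward $q$). You correctly notice this and retreat to endpoint estimates $d(p,L_\tau)^{2r^2},\,d(q,L_\tau)^{2r^2} \lesssim \diam(\tau)^{2r^2-1}\Delta$, then call SegLem. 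But SegLem returns $\sup_t d(f(t),g(t)) \leq C'\omega^{1/r}\rho'$, degrading the endpoint bound by a power $1/r$. Tracking through (with $\rho' \approx \diam(\tau_1) \approx \diam(\tau)$), this yields
\[
\frac{d_\tau^{2r^2}}{\diam(\tau)^{2r^2-1}} \lesssim \Delta^{1/r}\,\diam(\tau)^{1-1/r},
\]
which equals $(\diam(\tau)/\Delta)^{1-1/r}\,\Delta$, and this dominates $\Delta$ rather than being bounded by it whenever $\Delta \ll \diam(\tau)$. The $1/r$ loss in SegLem is already ``spent'' inside Theorem~\ref{Goal} --- that is precisely how $2r^2 = 2r\cdot r$ is produced from the exponent $2r$ in Lemma~\ref{BoundLem} --- so you cannot afford to re-apply SegLem on top of Theorem~\ref{Goal}.

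The paper's proof avoids this by applying Theorem~\ref{Goal} \emph{twice} with different four-point data and then using the triangle inequality, rather than one application followed by SegLem. First with $(a,z,v,w) = (\gamma(a(\tau)),\gamma(a(\tau_i)),\gamma(b(\tau_i)),\gamma(b(\tau)))$, giving $\sup_t d(L_{\gamma(a(\tau))\gamma(b(\tau_i))}(t),L_\tau)^{2r^2} \lesssim \diam(\tau)^{2r^2-1}\Delta'$; then with $(a,z,v,w) = (\gamma(a(\tau)),\gamma(b(\hat\tau)),\gamma(a(\tau_i)),\gamma(b(\tau_i)))$ for an intermediate arc $\hat\tau \in \mathcal{F}_{\tau,2}$, where the crucial change $w = \gamma(b(\tau_i))$ makes $L_{vw} = L_{\tau_i}$ exactly. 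This gives a uniform bound on $d(L_{\tau_i}(t),L_\tau)^{2r^2}$ with the correct exponent and no extra $1/r$. Two secondary remarks: the separation hypotheses of Theorem~\ref{Goal} are \emph{not} automatic from the filtration diameter bounds; the paper first reduces to the case where the right-hand side of \eqref{NewLem} is smaller than $L2^{-100k-113}$ (the other case being trivial from $d_\tau \leq 2\diam(\tau)$), and only under this smallness proves the needed lower bounds via a near-degeneracy contradiction --- your ``plenty of room'' heuristic misses this case split. And the boundary configurations $i=1,\,M$ do require a separate chain, which you flag but do not resolve; the paper inserts a level-two arc $\hat\tau$ as the ``$z$'' point.
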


\begin{proof}
Fix some $\tau \in \mathcal{F}_k \subset \mathcal{F}$.
As in the proof of Lemma~3.5 in \cite{LiSchul}, we have
\begin{equation}
\label{firstBound}
\frac{d_{\tau}^{2r^2}}{\diam (\tau)^{2r^2-1}}
\leq 2^{2r^2} \diam(\tau)
\leq 2^{2r^2} L 2^{-100k+4}
= 2^{2r^2+117} L 2^{-100k-113}.
\end{equation}
Thus if 
$$
\left( \left( \sum_{\tau' \in \mathcal{F}_{\tau,3}} d(\gamma(a(\tau')),\gamma(b(\tau'))) \right) - d(\gamma(a(\tau)),\gamma(b(\tau))) \right) \geq L2^{-100k-113},
$$
we are done.
Hence we may assume that 
\begin{equation}
\label{newBound}
\left( \left( \sum_{\tau' \in \mathcal{F}_{\tau,3}} d(\gamma(a(\tau')),\gamma(b(\tau'))) \right) - d(\gamma(a(\tau)),\gamma(b(\tau))) \right) < L2^{-100k-113}.
\end{equation}
Write $\mathcal{F}_{\tau,1} = \{ \tau_i \}_{i=1}^M$ arranged in order of the orientation of $\mathbb{T}$.
Set 
$$
P = \bigcup_{i=1}^{M-1} \{ \gamma(b(\tau_i)) \}.
$$
We will prove
\begin{equation}
\label{PBound}
d(P,\{\gamma(a(\tau)),\gamma(b(\tau)) \} ) \geq L2^{-100k-113}.
\end{equation}
(The proof of this is nearly identical to the proof of (18) in \cite{LiSchul}.)
Suppose \eqref{PBound} is not true.
That is, there is some $j$ so that (without loss of generality)
$d(\gamma(b(\tau_j)),\gamma(a(\tau))) < L2^{-100k-113}$.
Say $\xi$ is the sub arc of $\tau$ defined on $[a(\tau),b(\tau_j)]$.
The arc $\xi$ must contain at least one arc in $\mathcal{F}_{\tau,1}$,
so we have $\diam(\xi) \geq L2^{-100(k+1)-10}$.
Thus there is some point $w \in \xi$ so that
$$
\min \{ d(\gamma(a(\tau)),w), d(w,\gamma(b(\tau_j))) \} \geq L2^{-100(k+1)-12}.
$$
Note that $w \in \tilde{\tau}$ for some $\tilde{\tau} \in \mathcal{F}_{\tau,2}$.
Since $\diam(\tilde{\tau}) \leq L2^{-100(k+2)+4}$,
the triangle inequality gives
$$
\min \{ d(\gamma(a(\tilde{\tau})),d(\gamma(a(\tau))), d(\gamma(a(\tilde{\tau})),\gamma(b(\tau_j))) \} 
\geq L2^{-100k-113}.
$$
Therefore, according to the triangle inequality and the negation of \eqref{PBound}, we have
\begin{align*}
\sum_{\tau' \in \mathcal{F}_{\tau,3}} d(\gamma(a(\tau')),\gamma(b(\tau'))) 
& \geq d(\gamma(a(\tau)),d(\gamma(a(\tilde{\tau})))
+d(\gamma(a(\tilde{\tau})),\gamma(b(\tau_j)))
+d(\gamma(b(\tau_j)),\gamma(b(\tau))) \\
&> L2^{-100k-113} 
+d(\gamma(a(\tau)),d(\gamma(b(\tau_j)))
+d(\gamma(b(\tau_j)),\gamma(b(\tau))) \\
&\geq L2^{-100k-113} + d(\gamma(a(\tau)),\gamma(b(\tau))).
\end{align*}
This contradicts \eqref{newBound}.
A similar argument in the case of $\gamma(b(\tau))$ proves \eqref{PBound}.

For any $i \in \{ 1, \dots, M \}$,
we can repeat the above proof of \eqref{PBound}
replacing $\gamma(a(\tau))$ with $\gamma(a(\tau_i))$ 
and $\gamma(b(\tau_j))$ with $\gamma(b(\tau_i))$ to conclude
\begin{equation}
\label{aiBound}
d(\gamma(a(\tau_i)),\gamma(b(\tau_i)) ) \geq L2^{-100k-113}.
\end{equation}
Indeed, if $d(\gamma(a(\tau_i)),\gamma(b(\tau_i)) \} ) < L2^{-100k-113}$,
we set $\xi = \tau_i$ and follow the previous arguments to obtain
\begin{align*}
\sum_{\tau' \in \mathcal{F}_{\tau,3}} &d(\gamma(a(\tau')),\gamma(b(\tau'))) \\
&\geq d(\gamma(a(\tau)),d(\gamma(a(\tau_i)))
+ d(\gamma(a(\tau_i)),d(\gamma(a(\tilde{\tau})))
+d(\gamma(a(\tilde{\tau})),\gamma(b(\tau_i)))
+d(\gamma(b(\tau_i)),\gamma(b(\tau))) \\
&> d(\gamma(a(\tau)),d(\gamma(a(\tau_i)))
+L2^{-100k-113} 
+d(\gamma(a(\tau_i)),d(\gamma(b(\tau_i)))
+d(\gamma(b(\tau_i)),\gamma(b(\tau))) \\
&\hspace{.3in} \geq L2^{-100k-113} + d(\gamma(a(\tau)),\gamma(b(\tau)))
\end{align*}
which again contradicts \eqref{newBound}. This proves \eqref{aiBound}.

Fix $i \in \{2,\dots,M-1\}$.
We will first establish an estimate on the distance from $L_{\gamma(a(\tau))\gamma(b(\tau_i))}$ to $L_{\tau}$.
Combining \eqref{PBound} and \eqref{aiBound} allows us to bound
$$
\min \{ d(\gamma(a(\tau)), \gamma(a(\tau_i))), 
d(\gamma(a(\tau)), \gamma(b(\tau_i))),
d(\gamma(a(\tau_i)), \gamma(b(\tau_i))),
d(\gamma(b(\tau_i)), \gamma(b(\tau))) \} 
$$
from below by $L2^{-100k+4}2^{-117} \geq 2^{-117} \diam (\tau)$.
Therefore, the assumptions of Theorem~\ref{Goal} are satisfied
with $m = 2^{-217}$ and $\rho = \diam(\tau)$
where
$$
a = \gamma(a(\tau)), 
\quad
z = \gamma(a(\tau_i)),
\quad
v = \gamma(b(\tau_i)),
\quad 
\text{and}
\quad w = \gamma(b(\tau)).
$$
Theorem~\ref{Goal} then gives 
\begin{align}
\frac{d(L_{\gamma(a(\tau))\gamma(b(\tau_i))}(t),L_{\tau})^{2r^2}}{\diam(\tau)^{2r^2 - 1}}
&=
\frac{d(L_{av}(t),L_{aw})^{2r^2}}{\rho^{2r^2-1}} \nonumber \\
&\lesssim d(\gamma(a(\tau)), \gamma(a(\tau_i))) + d(\gamma(a(\tau_i)), \gamma(b(\tau_i))) \label{first}\\
&\hspace{.7in} + d(\gamma(b(\tau_i)), \gamma(b(\tau))) -  d(\gamma(a(\tau)), \gamma(b(\tau))) \nonumber
\end{align}
for any $t \in [0,1]$.

We now establish an estimate on the distance from $L_{\tau_i}$ to $L_{\gamma(a(\tau))\gamma(b(\tau_i))}$.
Pairing this with \eqref{first} will give \eqref{NewLem}.
Choose an arc $\hat{\tau} \in \mathcal{F}_{\tau,2}$ so that 
$\hat{\tau}$ is contained in the arc defined on $[a(\tau),a(\tau_i)]$
and $b(\hat{\tau}) \neq a(\tau_i)$.
Such an arc always exists
because, if it did not, then the only arc in $\mathcal{F}_{\tau,2}$ would be the arc defined on $[a(\tau),a(\tau_i)]$,
and this violates the diameter bounds (2) in the definition of a filtration.
We may follow the proof of \eqref{PBound} to conclude that
\begin{equation}
\label{hatBound}
\min \{ d(a(\tau),b(\hat{\tau})) , d(b(\hat{\tau}),a(\tau_i)) \} \geq L2^{-100k-213}.
\end{equation}
\begin{figure}[H]
\centering
\includegraphics[scale = 1]{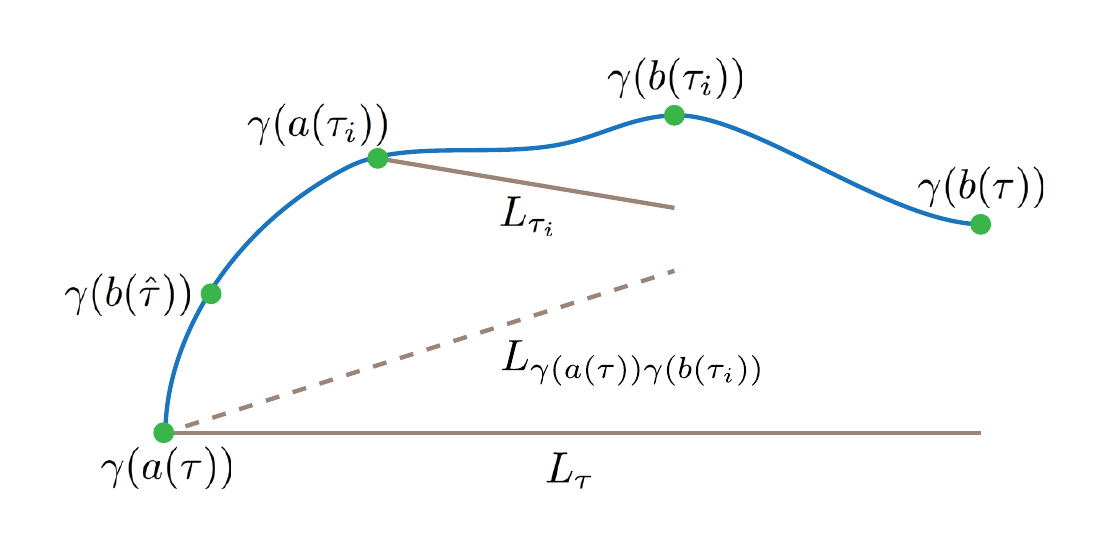}
\caption{}
\label{fig3}
\end{figure}
Indeed, assume (without loss of generality) that $d(a(\tau),b(\hat{\tau})) < L2^{-100k-213}$,
set $\xi$ to be the arc defined on $[a(\tau),b(\hat{\tau})]$,
and note that $\xi$ must contain an arc in $\mathcal{F}_{\tau,2}$.
Thus $\diam(\xi) \geq L2^{-100(k+2)-10}$, and we can choose $\tilde{\tau} \in \mathcal{F}_{\tau,3}$ 
with $\diam(\tilde{\tau}) \leq L2^{-100(k+3)+4}$
so that 
$$
\min \{ d(\gamma(a(\tilde{\tau})),d(\gamma(a(\tau))), d(\gamma(a(\tilde{\tau})),\gamma(b(\hat{\tau}))) \} 
\geq L2^{-100k-213}.
$$
Applying the triangle inequality and the negation of \eqref{PBound} leads to a contradiction of \eqref{newBound} just as before.
This proves \eqref{hatBound}.
We have therefore bounded
$$
\min \{ d(\gamma(a(\tau)), \gamma(b(\hat{\tau}))), 
d(\gamma(a(\tau)), \gamma(a(\tau_i))),
d(\gamma(b(\hat{\tau})), \gamma(a(\tau_i))),
d(\gamma(a(\tau_i)), \gamma(b(\tau_i))) \} 
$$
from below by $2^{-217} \diam (\tau)$ as before.
The assumptions of Theorem~\ref{Goal} are satisfied
with $m = 2^{-217}$ and $\rho = \diam(\tau)$
where
$$
a = \gamma(a(\tau)), 
\quad
z = \gamma(b(\hat{\tau})),
\quad
v = \gamma(a(\tau_i)),
\quad 
\text{and}
\quad w = \gamma(b(\tau_i)).
$$
This gives 
\begin{align}
\frac{d(L_{\tau_i}(t),L_{\gamma(a(\tau)) \gamma(b(\tau_i))})^{2r^2}}{\diam(\tau)^{2r^2 - 1}}
&=
\frac{d(L_{vw}(t),L_{aw})^{2r^2}}{\rho^{2r^2-1}} \nonumber \\
&\lesssim d(\gamma(a(\tau)), \gamma(b(\hat{\tau}))) 
+ d(\gamma(b(\hat{\tau})), \gamma(a(\tau_i))) \label{second} \\
&\hspace{.7in} + d(\gamma(a(\tau_i)), \gamma(b(\tau_i))) 
-  d(\gamma(a(\tau)), \gamma(b(\tau_i))) \nonumber
\end{align}
for any $t \in [0,1]$.

Fix $t \in [0,1]$.
Choose $p \in L_{\gamma(a(\tau)) \gamma(b(\tau_i))}$ so that
$$
d(L_{\tau_i}(t),L_{\gamma(a(\tau)) \gamma(b(\tau_i))}) = d(L_{\tau_i}(t),p)
$$
and $q \in L_{\tau}$ so that
$$
d(p,L_{\tau}) = d(p,q).
$$
Combining \eqref{first} and \eqref{second} gives
\begin{align*}
\frac{d(L_{\tau_i}(t),L_{\tau})^{2r^2}}{\rho^{2r^2-1}} 
&\leq \frac{d(L_{\tau_i}(t),q)^{2r^2}}{\rho^{2r^2-1}} 
\lesssim \frac{d(L_{\tau_i}(t),p)^{2r^2} 
+ d(p,q)^{2r^2}}{\rho^{2r^2-1}}\\
&= \frac{d(L_{\tau_i}(t),L_{\gamma(a(\tau)) \gamma(b(\tau_i))})^{2r^2}}{\rho^{2r^2-1}} 
+ \frac{d(p,L_{\tau})^{2r^2}}{\rho^{2r^2-1}}\\
&\lesssim d(\gamma(a(\tau)), \gamma(a(\tau_i))) + d(\gamma(a(\tau_i)), \gamma(b(\tau_i))) \\
&\hspace{.9in} + d(\gamma(b(\tau_i)), \gamma(b(\tau))) -  d(\gamma(a(\tau)), \gamma(b(\tau))) \\
&\quad + d(\gamma(a(\tau)), \gamma(b(\hat{\tau}))) + d(\gamma(b(\hat{\tau})), \gamma(a(\tau_i))) \\
&\hspace{.9in} + d(\gamma(a(\tau_i)), \gamma(b(\tau_i))) -  d(\gamma(a(\tau)), \gamma(b(\tau_i))) \\
&\leq 2[d(\gamma(a(\tau)), \gamma(b(\hat{\tau}))) + d(\gamma(b(\hat{\tau})), \gamma(a(\tau_i))) + d(\gamma(a(\tau_i)), \gamma(b(\tau_i))) \\
&\hspace{.9in} + d(\gamma(b(\tau_i)), \gamma(b(\tau))) -  d(\gamma(a(\tau)), \gamma(b(\tau)))] \\
&\leq 2 \left( \left( \sum_{\tau' \in \mathcal{F}_{\tau,3}} d(\gamma(a(\tau')),\gamma(b(\tau'))) \right) - d(\gamma(a(\tau)),\gamma(b(\tau))) \right).
\end{align*}

In the case $i=1$ (similarly, $i=M$)
where $a_1 = a$ (similarly, $b_M=b$),
choose $\hat{\tau} \in \mathcal{F}_{\tau,2}$ so that $\hat{\tau}$ is contained in the arc defined on $[a(\tau),a(\tau_2)]$ (similarly, the arc defined on $[a(\tau),a(\tau_M)]$)
and $b(\hat{\tau}) \neq a(\tau_2)$ (similarly, $b(\hat{\tau}) \neq a(\tau_M)$).
We may then apply Theorem~\ref{Goal} with $m = 2^{-217}$ and $\rho = \diam(\tau)$ (and noting that $a(\tau_2) = b(\tau_1)$) to 
$$
a = \gamma(a(\tau)), 
\quad
z = \gamma(b(\hat{\tau})),
\quad
v = \gamma(a(\tau_2)),
\quad 
\text{and}
\quad w = \gamma(b(\tau))
$$
(similarly, $v = \gamma(a(\tau_M))$)
to get, as in the proof of \eqref{first} and \eqref{second},
\begin{align*}
\frac{d(L_{\gamma(a(\tau_1))\gamma(b(\tau_1))}(t),L_{\tau})^{2r^2}}{\diam(\tau)^{2r^2 - 1}}
&\lesssim d(\gamma(a(\tau)), \gamma(b(\hat{\tau}))) + d(\gamma(b(\hat{\tau})), \gamma(b(\tau_1))) \\
&\hspace{.3in} + d(\gamma(b(\tau_1)), \gamma(b(\tau))) -  d(\gamma(a(\tau)), \gamma(b(\tau)))
\end{align*}
for any $t \in [0,1]$.
Similarly, we have the following bound for $i=M$:
\begin{align*}
\frac{d(L_{\gamma(a(\tau_M))\gamma(b(\tau_M))}(t),L_{\tau})^{2r^2}}{\diam(\tau)^{2r^2 - 1}}
&\lesssim d(\gamma(a(\tau)), \gamma(b(\hat{\tau}))) + d(\gamma(b(\hat{\tau})), \gamma(a(\tau_M)))\\
&\hspace{.3in} + d(\gamma(a(\tau_M)), \gamma(b(\tau))) -  d(\gamma(a(\tau)), \gamma(b(\tau))).
\end{align*}
This gives the result. 
\end{proof}

We conclude this subsection with the following estimate:
\begin{proposition}
\label{TSPdiscrete1}
$$
\sum_{\tau \in \mathcal{F}} \beta(\tau)^{2r^2} \diam(\tau) \lesssim \mathcal{H}^1(\Gamma).
$$
\end{proposition}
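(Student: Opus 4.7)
The plan is to reduce to bounding $\sum_{\tau \in \mathcal{F}} d_\tau^{2r^2}/\diam(\tau)^{2r^2-1} \lesssim \mathcal{H}^1(\Gamma)$ (which will follow directly from \eqref{NewLem} by telescoping), and then to recover the sum of $\beta(\tau)^{2r^2} \diam(\tau)$ via an iterated triangle inequality and a weighted H\"older step that is tailored so that the inevitable swap of summation does not introduce a multiplicity blow-up.

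For the first step, fix $\tau \in \mathcal{F}$. For any $\tau' \in \mathcal{F}_{\tau,1}$ and $p \in \tau'$, the triangle inequality gives $d(p,L_\tau) \leq d(p,L_{\tau'}) + \sup_{z \in L_{\tau'}} d(z,L_\tau) \leq \beta(\tau')\diam(\tau') + d_\tau$, and taking suprema yields
\[
\beta(\tau)\diam(\tau) \leq d_\tau + \max_{\tau' \in \mathcal{F}_{\tau,1}} \beta(\tau')\diam(\tau').
\]
Iterating this and using $\diam(\tau^{(k)}) \to 0$ gives $\beta(\tau)\diam(\tau) \leq \sum_{j \geq 0} D_j(\tau)$, where $D_j(\tau) := \max_{\sigma \in \mathcal{F}_{\tau,j}} d_\sigma$.

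Next, apply H\"older's inequality to $\sum_j D_j(\tau)$ with exponents $p = 2r^2$ and $q = p/(p-1)$, using the weight $w_j := 2^{\varepsilon j} r_j^{(p-1)/p}$, where $r_j := \max_{\sigma \in \mathcal{F}_{\tau,j}} \diam(\sigma) \lesssim 2^{-100j} \diam(\tau)$ and $\varepsilon > 0$ is chosen small enough that $\varepsilon q < 100$. Since $\sum_j w_j^q = \sum_j 2^{\varepsilon q j} r_j \lesssim \diam(\tau)$, and since $D_j(\tau)^{2r^2}/r_j^{2r^2-1} \leq d_{\sigma_j(\tau)}^{2r^2}/\diam(\sigma_j(\tau))^{2r^2-1}$ (where $\sigma_j(\tau) \in \mathcal{F}_{\tau,j}$ realizes $D_j(\tau)$), H\"older produces
\[
\beta(\tau)^{2r^2} \diam(\tau) \lesssim \sum_{j \geq 0} 2^{-2r^2 \varepsilon j}\, \frac{d_{\sigma_j(\tau)}^{2r^2}}{\diam(\sigma_j(\tau))^{2r^2-1}}.
\]
Summing over $\tau$ and swapping order: for fixed $\sigma \in \mathcal{F}_n$, the pairs $(\tau,j)$ with $\sigma_j(\tau) = \sigma$ force $\tau$ to be the unique ancestor of $\sigma$ at level $n-j$, so the induced inner sum is bounded by the geometric series $\sum_{j=0}^n 2^{-2r^2 \varepsilon j} \lesssim 1$. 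This yields $\sum_\tau \beta(\tau)^{2r^2} \diam(\tau) \lesssim \sum_\sigma d_\sigma^{2r^2}/\diam(\sigma)^{2r^2-1}$.

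Finally, to estimate the right-hand side, group by filtration level and apply \eqref{NewLem}: since each arc of $\mathcal{F}_{n+3}$ lies in a unique arc of $\mathcal{F}_n$, one gets $\sum_{\sigma \in \mathcal{F}_n} d_\sigma^{2r^2}/\diam(\sigma)^{2r^2-1} \lesssim P_{n+3} - P_n$, where $P_n := \sum_{\sigma \in \mathcal{F}_n} d(\gamma(a(\sigma)),\gamma(b(\sigma)))$. Summing in $n$ telescopes (modulo finitely many boundary contributions) to a multiple of $\lim_n P_n$, which is $\leq \mathcal{H}^1(\Gamma)$ since the $P_n$ are polygonal chord-length approximations with mesh tending to zero. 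The main obstacle is calibrating the weighted H\"older step: the factor $2^{\varepsilon j}$ must be small enough to preserve summability in $j$ (absorbed by the $2^{-100j}$ decay of $r_j$) yet positive enough that after swapping sums the series over ancestors of each $\sigma$ is uniformly bounded, thereby avoiding a multiplicity factor that would scale with filtration depth.
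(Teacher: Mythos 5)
Your argument is correct and follows the same broad outline as the paper: first sum the curvature estimate \eqref{NewLem} and telescope to obtain $\sum_{\tau\in\mathcal F} d_\tau^{2r^2}/\diam(\tau)^{2r^2-1}\lesssim \ell(\gamma)$; then convert this into the bound on $\sum_\tau \beta(\tau)^{2r^2}\diam(\tau)$. Where you diverge is in the packaging of the second step. The paper invokes \cite[Lemma 3.6]{LiSchul} for the pointwise inequality $\beta(\tau)\diam(\tau)\le\sum_k d_{\tau_k}$ (which you reprove yourself via the iterated triangle inequality $\beta(\tau)\diam(\tau)\le d_\tau+\max_{\tau'\in\mathcal F_{\tau,1}}\beta(\tau')\diam(\tau')$ — this is exactly what that cited lemma says), and then applies Minkowski's inequality in the weighted $\ell^{2r^2}$ norm over $\tau$: for each fixed depth $k$ it bounds $\bigl(\sum_\tau d_{\tau_k}^{2r^2}/\diam(\tau)^{2r^2-1}\bigr)^{1/(2r^2)}$ by $2^{(-100k+14)(2r^2-1)/(2r^2)}$ times the already-controlled quantity, and then sums the resulting geometric series in $k$. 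You instead apply a weighted H\"older inequality in the depth index $j$ for each fixed $\tau$, with a cleverly chosen $\varepsilon$-tilted weight $w_j=2^{\varepsilon j}r_j^{(2r^2-1)/(2r^2)}$, and then swap the order of summation, bounding the multiplicity for each fixed $\sigma$ by the geometric series $\sum_j 2^{-2r^2\varepsilon j}\lesssim 1$. Both routes exploit the $2^{-100j}$ dyadic decay of diameters enforced by the filtration. The paper's Minkowski version is marginally cleaner since it avoids introducing the auxiliary $\varepsilon$; your H\"older-plus-Fubini version is a direct and self-contained unfolding of the same mechanism, and avoids having to cite the outside Minkowski-step argument from \cite{LiSchul}. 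Both give the claimed bound.
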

Once this has been proven, we may argue exactly as in \cite[Corollary 3.3]{LiSchul}
to prove Proposition~\ref{TSPdiscreteG1},
and this completes the subsection.
(It is in that argument that the definition of $\mathcal{G}_1$ is used.)
\begin{proof}[Proof of Proposition \ref{TSPdiscrete1}]
Summing equation \eqref{NewLem} over all $\tau \in \mathcal{F}$ and all $k$ gives
\begin{align*}
\sum_{\tau \in \mathcal{F}} \frac{d_{\tau}^{2r^2}}{\diam(\tau)^{2r^2-1}}
&\leq
C'' \sum_{k=1}^{\infty}  \left( \left( \sum_{\tau \in \mathcal{F}_{k+3}} d(\gamma(a(\tau)),\gamma(b(\tau))) \right) 
- \left( \sum_{\tau \in \mathcal{F}_{k}} d(\gamma(a(\tau)),\gamma(b(\tau))) \right) \right)\\
&\leq 
3C'' \sup_{k \in \mathbb{N}} \sum_{\tau \in \mathcal{F}_{k}} d(\gamma(a(\tau)),\gamma(b(\tau)))
\leq 3C'' \ell(\gamma).
\end{align*}
For any $\tau \in \mathcal{F}$, 
say $\{ \tau_k\}_{k=0}^{\infty}$ is a sequence of subarcs
with $\tau_j \in \mathcal{F}_{\tau,j}$
chosen so that $d_{\tau_j}$ is maximal 
among all subarcs in $\mathcal{F}_{\tau,j}$.
Arguing as in the proof of \cite[Proposition 3.1]{LiSchul}, 
applying \cite[Lemma 3.6]{LiSchul}, 
using Minkowski's integral inequality in $\ell^{2r^2}$, 
and applying property (2) from the definition of filtrations gives
\begin{align*}
\left( \sum_{\tau \in \mathcal{F}} \beta(\tau)^{2r^2}\diam(\tau) \right)^{1/(2r^2)} 
&\leq \sum_{k=0}^{\infty} \left( \sum_{\tau \in \mathcal{F}} \frac{d_{\tau_k}^{2r^2}}{\diam(\tau)^{2r^2-1}} \right)^{1/(2r^2)}\\
&\leq \sum_{k=0}^{\infty} 2^{(-100k+14) \frac{2r^2-1}{2r^2}}
\left( \sum_{\tau \in \mathcal{F}} \frac{d_{\tau_k}^{2r^2}}{\diam(\tau_k)^{2r^2-1}} \right)^{1/(2r^2)}\\
&\leq \left(  3C'' \ell(\gamma) \right)^{1/(2r^2)}
\sum_{k=0}^{\infty} 2^{(-100k+14) \frac{2r^2-1}{2r^2}}\\
&\lesssim \ell(\gamma)^{1/(2r^2)}
\end{align*}
\end{proof}

\subsection{Flat balls}

In this section, we will prove the other half of \eqref{TSPdiscrete}:
\begin{proposition}
\label{TSPdiscreteFlat}
$$
\sum_{B \in \mathcal{G}_2} \beta_{\Gamma}(B)^2 \diam (B) \lesssim \mathcal{H}^1(\Gamma).
$$
\end{proposition}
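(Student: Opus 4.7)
The plan is to adapt the flat-ball argument from \cite[Section~4]{LiSchul} to the general Carnot setting, using the Hebisch--Sikora Pythagorean inequality supplied by Corollary~\ref{TICor} in place of the Koranyi-specific inequality used there. The key reason the exponent drops from $2r^2$ (in the non-flat case) to $2$ here is that, for $B \in \mathcal{G}_2$, each individual sub-arc crossing $B$ is already essentially straight, since $\beta(\tau) < 10^{-10}\beta_\Gamma(B)$ for every $\tau \in \Lambda'(Q(B))$. Consequently the non-horizontal defect $\Vert NH(w)\Vert^{2r}/\rho^{2r-1}$ appearing in \eqref{CorCon} is negligible, and the surplus in the triangle inequality comes purely from the horizontal Pythagorean term $d_{\mathbb{R}^n}(v_1,\ell_{w_1})^2/\rho$, which is of the same quadratic order as in the Euclidean case and accounts for the sharper exponent.

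First, for each $B \in \mathcal{G}_2$ I would pick an approximately optimal horizontal line $L_B$ and, using the flatness of every arc in $\Lambda'(Q(B))$ together with the non-smallness of $\beta_\Gamma(B)$, locate two sub-arcs whose transverse offsets from $L_B$ in the first horizontal layer differ by $\gtrsim \beta_\Gamma(B)\diam(B)$; such a pair must exist, since otherwise all arcs crossing $B$ would lie within distance $o(\beta_\Gamma(B)\diam(B))$ of $L_B$, contradicting the definition of $\beta_\Gamma(B)$. Choose representative points $p,q$ on these two arcs realising this separation. Next I would apply Corollary~\ref{TICor} to a finite chain of points along $\gamma$ between $p$ and $q$: in the flat regime the hypothesis \eqref{CorAss} is met, and each application contributes a surplus proportional to the square of the horizontal transverse separation. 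Summed along the chain, these surpluses yield the core local estimate
$$
\beta_\Gamma(B)^2 \diam(B) \lesssim \mathcal{H}^1(\gamma \cap cB) - d(p,q)
$$
for some absolute $c > 0$.

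Finally, I would sum this local bound across $\mathcal{G}_2$ by processing the scale-separated subfamilies $\mathcal{B}_1, \ldots, \mathcal{B}_D$ one at a time. Within a single $\mathcal{B}_i$, the $\kappa$-separation of same-radius balls together with the nested-cube property of $Q(B)$ allows the local surpluses to be assigned to essentially disjoint portions of $\gamma$, and a telescoping argument along the induced filtration converts the sum into a bound by $\ell(\gamma) = \mathcal{H}^1(\Gamma)$. Summing over the finitely many indices $i$ then yields Proposition~\ref{TSPdiscreteFlat}.

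The main obstacle will be the combinatorial bookkeeping that identifies each flat ball $B$'s local surplus with a distinct piece of $\gamma$ so that the telescoping across scales does not overcount. This is precisely the step carried out in \cite{LiSchul} in the Heisenberg setting, and I expect it to transfer with only cosmetic changes, since the bookkeeping there relies only on the doubling property of $(\G,d)$, the homogeneity and left-invariance of the metric, and the existence of horizontal projections onto first-layer segments, all of which are available in any Carnot group equipped with a homogeneous metric. The Hebisch--Sikora-specific content is entirely confined to the production of the quadratic surplus via Corollary~\ref{TICor}.
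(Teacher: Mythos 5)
Your plan diverges from the paper's route, and the divergence starts from a misidentification of which Carnot-specific input the flat-ball argument of \cite{LiSchul} actually requires. The paper's strategy for Proposition~\ref{TSPdiscreteFlat} is to observe that Section~4 of \cite{LiSchul} is almost entirely metric-space general, and to verify the only three Heisenberg-specific ingredients directly in $\G$: the scaling identity $\diam(B(p,\lambda r)) = \lambda\diam(B(p,r))$ (equation~(23)), the constant-speed identity $d(L(t_1),L(t_2)) = |t_1 - t_2|\,\Vert\tilde{\pi}(p^{-1}q)\Vert$ for horizontal segments (equation~(24)), and the arc--segment lemma (Lemma~4.1 of \cite{LiSchul}), namely $\sup_{x \in L_\tau} d(x,\tau) \lesssim \beta(\tau)\diam(\tau)$. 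The third of these is where the work lies, and the paper proves it via Lemma~\ref{tangent}: a compactness argument shows that no horizontal segment meeting $B(0,r_0)$ can be tangent to $\partial B(0,1)$, which then yields continuity of a suitable nearest-point-type map from $\tau$ to $L_\tau$ and hence surjectivity onto a subsegment. \emph{None of this is a Pythagorean surplus estimate}, and Corollary~\ref{TICor} plays no role in this section: the quadratic $\beta_\Gamma(B)^2$ is already produced by the metric-space machinery of \cite{LiSchul} once these three facts are supplied.

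Your proposal instead tries to generate the quadratic surplus directly from Corollary~\ref{TICor} by chaining it along $\gamma$ between points on two offset arcs. Even setting aside that this is not what the paper does, there are concrete gaps. First, you assert the non-horizontal term $\Vert NH(w)\Vert^{2r}/\rho^{2r-1}$ is negligible because the arcs are flat, but flatness of $\tau$ controls $d(p,L_\tau)$ for $p\in\tau$, not the non-horizontal component of differences $q^{-1}p$ of arbitrary pairs along your chain; you would need a separate argument that the $NH$ contribution can be absorbed. Second, the telescoping you invoke produces $\sum_i d(p_{i-1},p_i) - d(p_0,p_n)$, but bounding this \emph{below} by $\beta_\Gamma(B)^2\diam(B)$ requires showing that the horizontal Pythagorean terms $d_{\R^n}(v_1,\ell_{w_1})^2/\rho$ accumulate coherently along the chain, and for consecutive chain points the hypothesis~\eqref{CorAss} of Corollary~\ref{TICor} imposes a non-degeneracy that may fail. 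Third, and most importantly, you never establish any analogue of the arc--segment lemma, yet your argument implicitly relies on flat arcs being two-sidedly close to their chord segments (to locate the offset pair and to relate curve points to segment points); in the paper this is exactly the content of Lemma~\ref{tangent} and equation~\eqref{arcseg}, which are the genuinely new Carnot-specific facts needed here. If you want to pursue a TICor-based farthest-insertion argument it may well be salvageable, but as written the proposal skips the step the paper actually has to prove and substitutes an unproven claim in its place.
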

To do so, we will follow the proof in Section~4 of \cite{LiSchul}
of a similar bound in the Heisenberg group.
As stated at the beginning of that section,
most of the arguments therein may be applied in any general metric space.
The only Heisenberg-specific ingredients of the proof are Lemma~4.1 and equations (23) and (24).
Therefore, in order to prove Proposition~\ref{TSPdiscreteFlat},
it suffices to verify these three facts in $\mathbb{G}$.

Equation (23) in \cite{LiSchul} requires 
$$
\diam(B(p,\lambda r)) \leq \lambda \diam (B(p,r))
$$
for any $r>0$, $p \in \mathbb{G}$, and $\lambda > 1$.
In $\mathbb{G}$, we have
\begin{equation}
\label{23}
\diam(B(p,\lambda r)) = \lambda \diam (B(p,r))
\end{equation}
for any $r>0$, $p \in \mathbb{G}$, and $\lambda > 0$.
This follows from the fact that $\diam(B(p,r)) = 2 r$
for any left invariant, homogeneous metric in $\mathbb{G}$
\cite[Proposition 2.4]{FraSerSerOnthe}.
Moreover, equation (24) in \cite{LiSchul} is a result
of 
\begin{equation}
\label{24}
d(L(t_1),L(t_2)) = |t_1-t_2| \Vert \tilde{\pi}(p^{-1}q) \Vert
\end{equation}
for any horizontal segment $L:[0,1] \to \mathbb{G}$ and any $t_1,t_2 \in [0,1]$.

It remains to prove Lemma~4.1 from \cite{LiSchul}
in the Carnot group setting.
We first establish the following:

\begin{lemma}
\label{tangent}
There is a radius $0< r_0 \leq \tfrac12$ such that,
for any horizontal segment $L$ which intersects $B(0,r_0)$ non-trivially,
$L$ is never tangent to the unit sphere $\partial B(0,1) = \partial B_{\mathbb{R}^N}(0,\eta)$.
\end{lemma}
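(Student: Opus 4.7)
The plan is to argue by contradiction: suppose no such $r_0$ works, so for each $n \in \mathbb{N}$ there is a horizontal segment $L_n$ meeting $B(0, 1/n)$ and tangent (in the Euclidean sense) to the sphere $\partial B_{\mathbb{R}^N}(0, \eta)$ at some point $x_n \in L_n$. I would extend each segment to a full horizontal line and reparameterize to write $L_n(t) = x_n \cdot \delta_t(v_n, 0)$ for $t \in \mathbb{R}$ with $v_n \in V_1$ satisfying $|v_n| = \eta$; by \eqref{etafoot} this gives unit HS speed, while $x_n \in \partial B(0,1)$ forces $|x_n| = \eta$. The tangency hypothesis then reads $\langle x_n, L_n'(0) \rangle_{\mathbb{R}^N} = 0$.

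Next I would extract a subsequential limit. Since $x_n$ and $v_n$ both lie on the compact Euclidean sphere of radius $\eta$, I pass to a subsequence along which $(x_n, v_n) \to (x_\infty, v_\infty)$ with $|x_\infty| = |v_\infty| = \eta$. The group operation and dilations are polynomial in the exponential coordinates, so $L_n \to L_\infty$ uniformly on compact sets and $L_n'(0) \to L_\infty'(0)$, which lets me pass the tangency condition to the limit: $\langle x_\infty, L_\infty'(0) \rangle = 0$. To locate where $L_\infty$ crosses the origin, I pick $s_n \in \mathbb{R}$ with $\|L_n(s_n)\| < 1/n$; since the parameterization has unit HS speed and $\|L_n(0)\| = 1$, the triangle inequality forces $|s_n| \in [1 - 1/n, 1 + 1/n]$, and after passing to a further subsequence $s_n \to s^*$ with $|s^*| = 1$. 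Because $\|y\| \to 0$ implies $y \to 0$ in Euclidean coordinates, continuity then yields $L_\infty(s^*) = 0$.

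The punchline is that a horizontal line through the origin is a Euclidean straight segment in $V_1 \times \{0\}$. From $L_\infty(s^*) = 0$ and the fact that the inverse of a horizontal element is its Euclidean negative, I read off $x_\infty = (-s^* v_\infty, 0)$; since horizontal elements multiply by simply adding their horizontal parts (all BCH brackets vanish because $[v_\infty, v_\infty] = 0$), this gives
$$
L_\infty(t) = \bigl((t - s^*)\, v_\infty,\, 0\bigr), \qquad L_\infty'(t) \equiv (v_\infty, 0).
$$
Substituting into the limiting tangency relation yields $0 = \langle -s^* v_\infty, v_\infty \rangle = -s^* \eta^2$, which is impossible since $|s^*| = 1$ and $\eta > 0$. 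The main obstacle is the bookkeeping in the compactness step—ensuring that parameterizations, derivatives, and the tangency condition all converge compatibly—because once the limit is secured the geometric content is just the obvious fact that the Euclidean chord from $-s^* v_\infty$ through $0$ meets $\partial B_{\mathbb{R}^N}(0,\eta)$ transversally at both endpoints and hence cannot be tangent there.
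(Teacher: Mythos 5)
Your proof is correct and follows the same high-level strategy as the paper (contradiction plus compactness, then observing that a horizontal segment through the origin is a Euclidean line in $V_1\times\{0\}$, which cannot be tangent to the Euclidean sphere), but your implementation is genuinely cleaner in two respects. First, by reparameterizing so that $L_n(0)=x_n$ is the tangent point and the speed is unit (so $|x_n|=|v_n|=\eta$), both the basepoint and the direction vector lie on a \emph{fixed} compact Euclidean sphere; compactness is then immediate from Bolzano--Weierstrass and the polynomial dependence of the group law on coordinates, whereas the paper first restricts to horizontal segments living in the compact slab $B_{\mathbb{R}^n}(0,2\eta)\times\mathbb{R}^{N-n}$ and then invokes Arzel\`a--Ascoli with control of all derivatives. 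Second, your punchline is a one-line explicit computation: having located the limit tangent point $x_\infty=(-s^\ast v_\infty,0)$ with $|s^\ast|=1$, the limiting tangency condition $\langle x_\infty, L_\infty'(0)\rangle=0$ reads $-s^\ast\eta^2=0$, a contradiction. This is sharper than the paper's argument, which appeals more loosely to uniform convergence of derivatives to rule out tangency of $L_j$ for large $j$. One small point worth making explicit in your write-up: when you ``extend each segment to a full horizontal line and reparameterize,'' you are using that a horizontal segment $L_{pq}(t)=p\,\delta_t(\tilde\pi(p^{-1}q))$, $t\in[0,1]$, extends to all $t\in\mathbb{R}$ and, after shifting the parameter to base at $x_n$ and rescaling to $|v_n|=\eta$, has exactly the form $x_n\cdot\delta_t(v_n,0)$ because colinear horizontal elements commute; it is good practice to note that this reparameterization does not change the trace of the line or the tangency condition.
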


\begin{proof}
Note that we need only consider those horizontal segments
in $B_{\mathbb{R}^n}(0,2\eta) \times \mathbb{R}^{N-n}$. 
Indeed, the 
projection of the segment to $\mathbb{R}^n \times \{0\}$ is a Euclidean segment traversed at constant speed,
and the restriction of a horizontal segment to a subinterval is still a horizontal segment. 
Hence a horizontal segment will intersect both $B(0,r_0)$ and $\partial B(0,1)$
if and only if 
its restriction to $B_{\mathbb{R}^n}(0,2\eta) \times \mathbb{R}^{N-n}$
(which is also a connected, horizontal segment)
does as well.

Suppose by way of contradiction
that there is a sequence of horizontal segments $L_j:[0,1] \to \mathbb{G}$ 
in $B_{\mathbb{R}^n}(0,2\eta) \times \mathbb{R}^{N-n}$
which intersect $B(0,1/j)$ non-trivially and lie tangent to $\partial B(0,1)$.
Say $L_j = L_{p_jq_j}$ for some $p_j,q_j \in \mathbb{G}$.
These horizontal segments are all 4-Lipschitz since 
$$
d(L_j(s),L_j(t)) 
= d(\delta_s(\tilde{\pi}(p_j^{-1}q_j)),\delta_t(\tilde{\pi}(p_j^{-1}q_j)))
= \tfrac{1}{\eta}|s-t||(q_j)_1 - (p_j)_1|
\leq 4|s-t|.
$$
In particular, 
since each segment $L_j$ meets $B(0,1/j)$,
there is some $M_0>0$ so that $|p_j| < M_0$ for every $j \in \mathbb{N}$.
Write $p_j = (p_j^1,p_j^2)$ and $q_j = (q_j^1,q_j^2)$.
By definition, we have
$$
L_j(s) = \left(p_j^1 + s(q_j^1 - p_j^1), p_j^2 + P\left(p_j,\left(s(q_j^1-p_j^1),0\right) \right) \right)
\quad
\text{for any }
s \in [0,1], \; j \in \mathbb{N}
$$
for some polynomial $P$ given by the BCH formula.
Therefore, by the uniform boundedness of $|p_j|$ and $|q_j^1|$,
there is some $M_1 > 0$ so that $\left| d^i / d s^i (L_j) \right| < M_1$
for every $i,j \in \mathbb{N}$.
The Arzel\`{a}-Ascoli theorem then gives a 
subsequence of these horizontal segments (also called $\{L_j\}$)
converging uniformly in $\mathbb{R}^N$ (and thus in $\mathbb{G}$) to some $C^{\infty}$ curve $L:[0,1] \to \mathbb{G}$
passing through the origin
so that all derivatives of $L_j$ converge uniformly to the corresponding derivatives of $L$.
Note that $L$ itself must also be a horizontal segment.
Indeed, $p_j = L_j(0) \to p$ for some $p \in \mathbb{G}$,
and $\tilde{\pi}(p_j^{-1}q_j) = p_j^{-1} L_j(1) \to (z,0)$ for some $z \in \mathbb{R}^n$.
Thus, for any $q \in \mathbb{G}$ satisfying $q_1 - p_1 = z$, we have
$$
L(s) 
= \lim_{j \to \infty} L_j(s) 
= \lim_{j \to \infty} p_j * \delta_s(\tilde{\pi}(p_j^{-1}q_j)) 
= p * \delta_s((z,0))
= p * \delta_s(\tilde{\pi}(p^{-1}q))
$$
for every $s \in [0,1]$.
Since $L$ is a horizontal segment passing through the origin,
it must be the case that $L$ is a Euclidean line segment in $\mathbb{R}^n \times \{0\}$.\footnote{Divide $L$ 
into two segments: the segment ending at 0 and the segment starting at 0. 
Since both of these must also be horizontal, they must be Euclidean segments.}
In particular, $L$ cannot be tangent to the Euclidean sphere $\partial B_{\mathbb{R}^N}(0,\eta)$.
Since the derivatives of the segments $L_j$ converge uniformly to the derivatives of $L$, 
it is impossible that $L_j$ lies tangent to the sphere for every $j$
(as these segments may only intersect the sphere in a neighborhood of $L$).
This is a contradiction and completes the proof.
\end{proof}

The following is the Carnot group version of Lemma~4.1 from \cite{LiSchul}.
Note that, here, we have the constant $r_0$ included in the inequality, while, in \cite{LiSchul}, the constant is 1.
This, however, is not a problem since the constant $r_0$ depends only on $\mathbb{G}$.

\begin{lemma}
Let $\tau$ be a connected subarc.
Then
\begin{equation}
\label{arcseg}
\sup_{x \in L_{\tau}} d(x,\tau) \leq r_0^{-1} \beta(\tau) \diam(\tau).
\end{equation}
In particular, if we write $I_\tau = [a,b]$, we have
\begin{equation}
\label{endpt}
d(L_{\tau}(1), \gamma(b)) \leq r_0^{-1} \beta(\tau) \diam(\tau).
\end{equation}
\end{lemma}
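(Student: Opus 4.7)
My plan is to prove \eqref{arcseg} by contradiction using a scaling argument and Lemma~\ref{tangent}, then to deduce \eqref{endpt} as a consequence.

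Suppose for contradiction that some $x \in L_\tau$ satisfies $\delta := d(x,\tau) > r_0^{-1}\beta(\tau)\diam(\tau)$; equivalently $r_0 \delta > \beta(\tau)\diam(\tau)$. Using left invariance and $1$-homogeneity of $d$, translate $x$ to the origin and rescale by $1/\delta$. In the rescaled picture: the horizontal segment $L_\tau$ passes through the origin, the arc $\tau$ avoids the open ball $B(0,1)$, and every point of $\tau$ lies within distance strictly less than $r_0$ of $L_\tau$ (by the definition of $\beta(\tau)$). Since $\gamma(a) = L_\tau(0) \in \tau$, the point $\gamma(a)$ lies on $L_\tau$ outside $B(0,1)$, so $L_\tau$ is a horizontal segment that passes through the origin and extends to distance at least $1$ from the origin. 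The subsegment $L'$ from the origin to $\gamma(a)$ is itself horizontal, meets $B(0,r_0)$ (at its origin endpoint), and crosses $\partial B(0,1)$; Lemma~\ref{tangent} therefore forbids $L'$ from being tangent to $\partial B(0,1)$.

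The main obstacle is converting this qualitative non-tangency into a quantitative contradiction. The plan is to upgrade Lemma~\ref{tangent} (by the same Arzel\`{a}--Ascoli limiting argument used in its proof) to the statement that any horizontal segment meeting $B(0,r_0)$ and $\partial B(0,1)$ must actually enter the interior ball $B(0,1-r_0)$: otherwise a limiting segment through the origin would be tangent to $\partial B(0,1)$, but such a limit lies in $\mathbb{R}^n \times \{0\}$ and cannot be tangent to the Euclidean sphere it passes through. Granted this, $L'$ contains a point $y$ with $|y| \leq 1 - r_0$. Combined with the $\beta$-condition, I would use connectedness of $\tau$ and the fact that $\gamma(a) \in \tau \cap L_\tau$ to argue that nearest-point projections of $\tau$ onto $L_\tau$ start at $L_\tau(0)$ and must sweep continuously (up to jumps bounded by the $\beta$-distance) along $L_\tau$. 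They therefore cannot skip the neighborhood of $y$, forcing some point of $\tau$ into $B(0,1)$ --- contradicting $\tau \cap B(0,1) = \emptyset$.

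For \eqref{endpt}, apply \eqref{arcseg} at $x = L_\tau(1)$ to find $t^* \in I_\tau$ with $d(L_\tau(1),\gamma(t^*)) \leq r_0^{-1}\beta(\tau)\diam(\tau)$. To pin the nearest point down to $\gamma(b)$, I exploit the defining identity $L_\tau(1) = \gamma(a)\tilde{\pi}(\gamma(a)^{-1}\gamma(b))$, which gives
\[
d(L_\tau(1),\gamma(b)) = \Vert \tilde{\pi}(\gamma(a)^{-1}\gamma(b))^{-1}\gamma(a)^{-1}\gamma(b)\Vert = \Vert NH(\gamma(a)^{-1}\gamma(b))\Vert.
\]
Since $L_\tau(1)$ and $\gamma(b)$ share their horizontal component relative to $\gamma(a)$ while differing only in the non-horizontal part, the estimate of Lemma~\ref{NH} (applied to suitably scaled versions of $\gamma(a)^{-1}\gamma(b)$ and the horizontal displacement defining $L_\tau$) should bound this HS-norm directly in terms of the $\beta$-approximation of $\tau$ by $L_\tau$. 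Alternatively, the construction of $L_\tau(1)$ singles it out as the natural projection of $\gamma(b)$, so $\gamma(b)$ realizes the $\tau$-infimum in \eqref{arcseg} at $x=L_\tau(1)$.
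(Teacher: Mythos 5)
Your proposal has a genuine gap, and it also reverses the logical order the paper relies on. The paper proves \eqref{endpt} \emph{first} and then uses it to establish surjectivity of a projection map when proving \eqref{arcseg}; your plan proves \eqref{arcseg} first and tries to recover \eqref{endpt} from it, but neither leg of this plan works as written. For \eqref{arcseg}, your contradiction setup translates $x\in L_\tau$ to the origin and rescales by $1/d(x,\tau)$, but after that rescaling $L_\tau$ passes \emph{through} the origin --- the center of the ball --- and so is automatically transversal to $\partial B(0,1)$; Lemma~\ref{tangent} forbids tangency, which never occurs in your configuration, so no contradiction results. The paper instead centers balls at points $p\in\tau$ (rescaling by $r_0(\beta(\tau)\diam\tau)^{-1}$ so those spheres become $\partial B(0,1)$) and uses Lemma~\ref{tangent} to prove continuity of a carefully chosen ``sup-projection'' map $f(p)=L_\tau(\sup\{t: d(L_\tau(t),p)\le r_0^{-1}\beta(\tau)\diam\tau\})$ from $\tau$ onto $L_\tau$. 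Your heuristic that nearest-point projections ``sweep continuously (up to jumps bounded by the $\beta$-distance)'' is exactly the claim that Lemma~\ref{tangent} is needed to establish: nearest-point projection onto a segment in a general metric space can jump by arbitrarily large amounts, and controlling those jumps is precisely the content of the continuity argument. Moreover, even granting a continuous projection, you still need to know it reaches $L_\tau(1)$, which is what \eqref{endpt} gives --- so \eqref{endpt} is an ingredient, not a consequence.

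The proof of \eqref{endpt} that you sketch is also incomplete. Applying \eqref{arcseg} at $x=L_\tau(1)$ only produces some nearby $\gamma(t^*)\in\tau$, not $\gamma(b)$ specifically, and the appeal to Lemma~\ref{NH} to bound $\Vert NH(\gamma(a)^{-1}\gamma(b))\Vert$ by the $\beta$-approximation is not spelled out and does not obviously go through. The paper's argument is short and direct: with $\gamma(a)=0$, pick $t_0$ with $d(\gamma(b),L_\tau(t_0))=d(\gamma(b),L_\tau)$, observe that $L_\tau(1)=\tilde\pi(\gamma(b))$ and $L_\tau(t_0)=\delta_{t_0}(\tilde\pi(\gamma(b)))$ are collinear in $\mathbb R^n\times\{0\}$, and use the inequality $\Vert\tilde\pi(p)\Vert\le\Vert p\Vert$ to get $d(L_\tau(1),L_\tau(t_0))\le d(\gamma(b),L_\tau(t_0))$; the triangle inequality then gives $d(L_\tau(1),\gamma(b))\le 2\beta(\tau)\diam(\tau)\le r_0^{-1}\beta(\tau)\diam(\tau)$. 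You should prove \eqref{endpt} this way first, then build the continuity and surjectivity argument for \eqref{arcseg}.
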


\begin{proof}
Recall that $\beta(\tau) \diam(\tau) = \sup_{p \in \tau} d(p,L_\tau)$. 
By the invariance of the metric under left translation,
we may assume that $0 = \gamma(a) = L_{\tau}(0)$.
We begin by proving \eqref{endpt}.
Choose $t_0 \in [0,1]$ so that $d(\gamma(b),L_{\tau}(t_0)) = d(\gamma(b),L_{\tau}) \leq \beta(\tau) \diam(\tau)$.
Since $L_{\tau}(1) = \tilde{\pi}(\gamma(b))$ and $L_{\tau}(t_0) = \delta_{t_0}(\tilde{\pi}(\gamma(b)))$
are co-linear in $\mathbb{R}^n \times \{0\}$, 
it follows (as in the proof of Lemma~\ref{BoundLem})
that 
$$
d(L_{\tau}(1),L_{\tau}(t_0)) 
= \left \Vert \delta_{t_0}(\tilde{\pi}(\gamma(b)))^{-1} * \tilde{\pi}(\gamma(b)) \right \Vert 
= \left \Vert \tilde{\pi}[\delta_{t_0}(\tilde{\pi}(\gamma(b)))^{-1} * \gamma(b)] \right \Vert 
\leq d( \gamma(b), L_{\tau}(t_0)).
$$
Therefore, we have
$$
d(L_{\tau}(1), \gamma(b)) \leq d(L_{\tau}(1),L_{\tau}(t_0)) + d(L_{\tau}(t_0), \gamma(b)) \leq 2\beta(\tau) \diam(\tau) \leq r_0^{-1} \beta(\tau) \diam(\tau).
$$

In order to prove \eqref{arcseg}, we will first show that the mapping $f:\tau \to L_{\tau}$ defined as 
$$
f(p) = L_{\tau}(t_0) \quad \text{where } t_0 = \sup \{ t \in [0,1] \, : \, d(L_{\tau}(t),p) \leq r_0^{-1} \beta(\tau) \diam(\tau) \}
$$
is continuous.
(Note that $d(p,L_{\tau}) \leq \beta(\tau) \diam(\tau)$ 
for every $p \in \tau$, so $f$ is well defined.)
In order to prove that $f$ is continuous,
it suffices to prove for every $p \in \tau$ 
that $L_{\tau}$ does not lie tangent to the sphere centered at $p$ with radius $r_0^{-1} \beta(\tau) \diam(\tau)$. 

Fix $p \in \tau$.
We may translate by $p^{-1}$ and dilate by $r_0 (\beta(\tau) \diam(\tau))^{-1}$
to reduce to the following problem:
show that the horizontal segment $L = \delta_{r_0 (\beta(\tau) \diam(\tau))^{-1}}(p^{-1}*L_{\tau})$
is never tangent to the sphere $\partial B(0,1)$.
This follows from Lemma~\ref{tangent} since
$$
d(0,L) 
= d(0,\delta_{r_0 (\beta(\tau) \diam(\tau))^{-1}}(p^{-1}*L_{\tau}))
= r_0 (\beta(\tau) \diam(\tau))^{-1} d(p,L_{\tau})
\leq r_0
$$
implies that the segment $L$ intersects the ball $B(0,r_0)$ non-trivially.
Therefore, $f$ is continuous. 

Since $\tau$ is connected and $f(\gamma(b)) = L_{\tau}(1)$ by \eqref{endpt}, 
the continuous map $f$ sends $\tau$ onto the interval $[f(0),L_{\tau}(1)] \subset L_\tau$.
Say $t_1 \in [0,1]$ is such that $L_{\tau}(t_1) = f(0)$.
Then, for any $t \in [t_1,1]$,
we have 
$d(L_{\tau}(t),\tau) \leq d(L_{\tau}(t),p) \leq r_0^{-1} \beta(\tau) \diam(\tau)$
for some $p \in \tau$ by the surjectivity of $f$,
and,
for any $t \in [0,t_1]$,
we have 
$$
d(L_{\tau}(t),0) 
= t \Vert \tilde{\pi}(\gamma(b)) \Vert
\leq t_1 \Vert \tilde{\pi}(\gamma(b)) \Vert
= d(L_{\tau}(t_1),0) 
\leq r_0^{-1} \beta(\tau) \diam(\tau).
$$
This proves the lemma.
\end{proof}

With the above lemmas established,
we may now argue exactly as in Section 4 of \cite{LiSchul}
(with the constants therein adjusted appropriately to account for $r_0$)
to conclude Proposition~\ref{TSPdiscreteFlat}.
This, together with Proposition~\ref{TSPdiscreteG1}, finishes the proof of \eqref{TSPdiscrete},
and thus the proof of Theorem~\ref{TSP} is complete.

\section{Step 2 groups}
\label{Step2Sec}

In this section, we will prove Theorem~\ref{TSP2}.
In Theorem~\ref{t:H-TSP1} (proven in \cite{LiSchul}), 
the Traveling Salesman Theorem is established in the Heisenberg group, 
and the exponent on the $\beta$-numbers is 4.
However, this is not the same exponent provided by Theorem~\ref{TSP}.
Indeed, the Heisenberg group has step $r=2$, 
and we have proven the TST in step 2 groups 
where the exponent on the $\beta$-numbers is $2r^2 = 8$.

The following lemma will replace Lemma~\ref{BoundLem}
and will allow us to replace all instances of $2r^2$ with $2r$ in Theorem~\ref{Goal} and in all of the arguments that follow.
This will prove Theorem~\ref{TSP2} and provide a true generalization of Theorem~\ref{t:H-TSP1}.
In the following proof, we will work with $d_{\infty}$ defined on a step 2 group $\mathbb{G}$ as
$$
d_{\infty}(x,y) = N_{\infty}(y^{-1}x) \quad \text{where } N_{\infty}(p) = \max\{ |p_1|,|p_2|^{1/2} \}
$$
for any $p=(p_1,p_2) \in \mathbb{G}$.
Though $d_{\infty}$ is not a true metric
(since a scaling constant is present in the triangle inequality),
it is homogeneous and hence bi-Lipschitz equivalent to the HS-distance $d$ in the sense of \eqref{quasiconvexity}.
This will suffice.
\begin{lemma}
\label{BoundLem2}
Suppose $\mathbb{G}$ is a step 2 Carnot group
and the following bounds hold for some $C=C(\mathbb{G})>0$:
\begin{align}
d(L_v(1),L_w)^{4} = d(\tilde{\pi}(v),L_w)^{4} &\leq C\rho^{3} \Delta, \label{i2} \\
d(L_{vw}(0),L_w)^{4} = d(v,L_w)^{4} &\leq C\rho^{3} \Delta, \label{ii2} \\
d(L_{vw}(1),L_w)^{4} = d(v \, \tilde{\pi}( v^{-1} w) ,L_w)^{4} &\leq C\rho^{3} \Delta. \label{iii2}
\end{align}
Then 
$$
\sup_{t \in [0,1]} d(L_v(t),L_w)^{4} + \sup_{t \in [0,1]} d(L_{vw}(t),L_w)^{4}
\lesssim \rho^{3} \Delta.
$$
\end{lemma}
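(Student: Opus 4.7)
The plan is to follow the blueprint of the proof of Lemma~\ref{BoundLem} but to work throughout with the equivalent quasi-metric $d_\infty$ introduced just before the statement. Since $d_\infty$ is bi-Lipschitz equivalent to $d$ in the sense of \eqref{quasiconvexity}, the hypotheses translate to $d_\infty(v, L_w),\; d_\infty(v\tilde{\pi}(v^{-1}w), L_w),\; d_\infty(\tilde{\pi}(v), L_w) \lesssim \omega\rho$, with $\omega := (\rho^{-1}\Delta)^{1/4}$, and it suffices to prove the analogous bound in $d_\infty$. The advantage of $d_\infty$ is that in a step 2 group the group law contains only a single quadratic BCH correction, so horizontal segments take the closed form
$$
L_{pq}(t) = \bigl(p_1 + t(q_1-p_1),\; p_2 + \tfrac{t}{2}[p_1, q_1]\bigr),
$$
and $d_\infty$ separates horizontal and non-horizontal contributions cleanly. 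The $L_v$ piece of the conclusion is essentially Euclidean since $L_v, L_w \subset \mathbb{R}^n \times \{0\}$: $d_\infty(L_v(t), L_w)$ is just the Euclidean distance from $t v_1$ to the segment $[0, w_1]$, which is linear in $t$ and controlled by $d_\infty(\tilde{\pi}(v), L_w) \lesssim \omega\rho$ via \eqref{i2}.

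For the $L_{vw}$ piece, choose $s_0, s_1 \in [0, 1]$ realizing $d_\infty(v, L_w(s_0)) = d_\infty(v, L_w)$ and $d_\infty(v\tilde{\pi}(v^{-1}w), L_w(s_1)) = d_\infty(v\tilde{\pi}(v^{-1}w), L_w)$, and set $g(t) := L_w(s_0 + t(s_1-s_0))$. A direct computation with the step 2 group law gives $g(t)^{-1} L_{vw}(t) = (H(t),\, P(t))$, where $H(t)$ is affine in $t$ with $|H(0)|, |H(1)| \lesssim \omega\rho$, and
$$
P(t) = (1-t)P(0) + t P(1) + \tfrac{1}{2}[v_1, w_1]\, t(1-t)(s_1 - s_0),
$$
with $|P(0)|, |P(1)| \lesssim \omega^2 \rho^2$. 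Crucially, since $[w_1, w_1] = 0$, the second coordinate of $L_w(s)^{-1} L_{vw}(1)$ equals the $s$-independent quantity $v_2 + \tfrac{1}{2}[v_1, w_1]$, which is what feeds the $P(1)$ bound above.

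The main obstacle, and the single place where all three hypotheses enter, is to bound the quadratic correction $|[v_1, w_1]|\cdot |s_1-s_0|$ by $O(\omega^2\rho^2)$. The plan is to split on which of $|1-s_0|$ and $|1-s_1|$ is larger. If $|1-s_0| \geq |1-s_1|$, then subtracting the bounds $|v_2 + \tfrac{s_0}{2}[v_1, w_1]| \lesssim \omega^2\rho^2$ (from \eqref{ii2}) and $|v_2 + \tfrac{1}{2}[v_1, w_1]| \lesssim \omega^2\rho^2$ (from \eqref{iii2}) yields $|1-s_0| \cdot |[v_1, w_1]| \lesssim \omega^2\rho^2$, and hence the same bound for $|s_1 - s_0|\cdot|[v_1, w_1]|$. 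If instead $|1-s_0| < |1-s_1|$, then $|s_1 - s_0| \leq 2|1-s_1| \lesssim \omega\rho/|w_1|$ from $|(1-s_1)w_1| \lesssim \omega\rho$; and since the Lie bracket in a step 2 group factors through $V_1 \wedge V_1$, we have $|[v_1, w_1]| \lesssim |v_1|\sin\theta \cdot |w_1|$, where $\theta$ is the Euclidean angle between $v_1$ and $w_1$. The factor $|v_1|\sin\theta$ equals the Euclidean distance from $v_1$ to the line spanned by $w_1$, and is therefore bounded by $d_\infty(\tilde{\pi}(v), L_w) \lesssim \omega\rho$ via \eqref{i2}. In both cases $|P(t)| \lesssim \omega^2 \rho^2$ uniformly in $t$, so $d_\infty(L_{vw}(t), g(t)) \lesssim \omega\rho$. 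Raising to the fourth power and translating back to $d$ via \eqref{quasiconvexity} gives the claimed bound. The net effect is that this case analysis replaces the $\omega^{1/r}$ loss of Lemma~\ref{SegLem} by a loss-free $\omega$, which is precisely what is needed to upgrade the exponent from $2r^2 = 8$ to $2r = 4$ in step 2.
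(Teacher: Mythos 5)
Your argument is correct, and it reaches the same key starting point as the paper's proof: both work with $d_\infty$, both use the step-$2$ BCH formula to write $L_{vw}(t)$ and the interpolating sub-segment $g(t)$ of $L_w$ in closed form, and both arrive at the same expression for $g(t)^{-1}L_{vw}(t)$, whose vertical coordinate is $v_2 + \tfrac12(t+t'-tt')[v_1,w_1]$ with $t' = s_0 + t(s_1-s_0)$. Where you part ways with the paper is in how you handle this vertical coordinate. You separate it as
$$
P(t) = (1-t)P(0) + tP(1) + \tfrac12\,t(1-t)(s_1-s_0)\,[v_1,w_1]
$$
and then bound the residual quadratic term by a two-case analysis on whether $|1-s_0|$ or $|1-s_1|$ is larger, invoking $[w_1,w_1]=0$ to trade $[v_1,w_1]$ for the transverse part of $v_1$ (controlled by \eqref{i2}) in the second case. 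The paper avoids this entirely with a cleaner observation: with $t_0 := (t+t'-tt'-t_1)/(1-t_1) = t + t(1-t)(t_2-t_1)/(1-t_1)$, one has $t_0\in[0,1]$ (after arranging $t_1\le t_2$), so $v_2 + \tfrac12(t+t'-tt')[v_1,w_1]$ is \emph{literally} a convex combination of $P(0)$ and $P(1)$; no cross-term survives and no case split is needed. A notable consequence is that the paper's bound on the $L_{vw}$ piece uses only the endpoint hypotheses \eqref{ii2} and \eqref{iii2}, whereas your Case 2 genuinely draws on \eqref{i2} as well. Both routes are valid and both eliminate the lossy $\omega^{1/r}$ factor of Lemma~\ref{SegLem}, which is exactly what is needed to improve the exponent from $2r^2=8$ to $2r=4$.

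Two small points worth tightening. First, your description of $d_\infty(L_v(t),L_w)$ as ``just the Euclidean distance from $tv_1$ to $[0,w_1]$'' is not literally correct: even for two points in $\mathbb{R}^n\times\{0\}$, the group law introduces a vertical BCH term, so $d_\infty\bigl((p_1,0),(q_1,0)\bigr) = \max\{|p_1-q_1|, \tfrac{1}{\sqrt 2}|[q_1,p_1]|^{1/2}\}$. The cleanest fix is to pick the comparison point $s = t s_*$, where $L_w(s_*)$ realizes $d_\infty(\tilde\pi(v),L_w)$; then both coordinates of $L_w(ts_*)^{-1}L_v(t)$ are dominated by their values at $t=1$, giving $d_\infty(L_v(t),L_w) \le d_\infty(\tilde\pi(v),L_w)$ directly. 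Second, in Case 2 you should note explicitly (though it is implicit) that the factors of $|w_1|$ cancel: you get $|[v_1,w_1]|\,|s_1-s_0| \lesssim (\omega\rho\,|w_1|)\cdot(\omega\rho/|w_1|) = \omega^2\rho^2$, and the degenerate case $w_1=0$ is trivial since then $[v_1,w_1]=0$.
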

The hypothesis of this lemma is the same as in Lemma~\ref{BoundLem}
with the substitution $r=2$.
However, the conclusion is different:
we have the exponent $4=2r$ rather than $8=2r^2$.
Once this lemma has been proven, 
the rest of the arguments in the paper follow in exactly the same manner
with all instances of $2r^2$ replaced with 4.
\begin{proof}
As in the proof of Lemma~\ref{BoundLem}, 
consider the horizontal segment $f = L_{vw}$
and the sub-segment 
$$
g(t) = \delta_{t(t_2-t_1) + t_1}(\tilde{\pi}(w))
=\delta_{t_1}(\tilde{\pi}(w)) * \delta_t( \delta_{t_1}(\tilde{\pi}(w))^{-1} \delta_{t_2}(\tilde{\pi}(w)) )
$$
of $L_w$
where $t_1,t_2 \in [0,1]$ are chosen so that
\begin{equation}
\label{close}
d_{\infty}(f(0),g(0))^4 \leq L^4C\rho^{3} \Delta
\quad
\text{and}
\quad
d_{\infty}(f(1),g(1))^4 \leq L^4C\rho^{3} \Delta.
\end{equation}
Since the BCH formula reduces to $X + Y + \frac12 [X,Y]$ in a step 2 Carnot group, 
we have for any $t \in [0,1]$ and $t' = t(t_2-t_1) + t_1$
\begin{align*}
g(t)^{-1}f(t) 
&=[\delta_{t_1}(\tilde{\pi}(w)) * \delta_t( \delta_{t_1}(\tilde{\pi}(w))^{-1} \delta_{t_2}(\tilde{\pi}(w)) )]^{-1}  v \delta_t(\tilde{\pi}(v^{-1}w)) \\
&=\left(-t'w_1,0)*(v_1 + t(w_1-v_1), v_2 + \tfrac12 [v_1,t(w_1-v_1)] \right) \\
&=\left(v_1 + t(w_1-v_1) - t'w_1, v_2 + \tfrac{t}{2} [v_1,w_1] + \tfrac12 [-t'w_1,v_1 + t(w_1-v_1)]\right) \\
&=\left(v_1 + t(w_1-v_1) - t'w_1, v_2 + \tfrac12(t+t'-tt')[v_1,w_1] \right).
\end{align*}
In particular, we have
$$
g(0)^{-1}f(0) = \left(v_1 - t_1w_1,  v_2 + \tfrac{t_1}{2}[v_1,w_1] \right)
\text{ and }
g(1)^{-1}f(1) = \left((1 - t_2)w_1,  v_2+ \tfrac12[v_1,w_1] \right),
$$
so \eqref{close} gives
$$
\max \{ |v_1 - t_1w_1|, |(1 - t_2)w_1|, |v_2 + \tfrac{t_1}{2}[v_1,w_1]|^{1/2}, |v_2+ \tfrac12[v_1,w_1]|^{1/2} \} \lesssim \rho^{3/4} \Delta^{1/4}.
$$

We will now show that $d_{\infty}(f(t),g(t)) = N_{\infty}(g(t)^{-1}f(t)) \lesssim \rho^{3/4} \Delta^{1/4}$ for any $t\in[0,1]$.
Indeed, we first have
$$
|v_1 + t(w_1-v_1) - t'w_1| = |(1-t)(v_1-t_1w_1) + t(1-t_2)w_1| \lesssim \rho^{3/4} \Delta^{1/4}.
$$
To bound the second coordinate, we choose $t_0$ so that $t+t'-tt' = t_0 +  (1-t_0)t_1$.
That is,
$$
t_0 := \frac{t+t'-tt' -t_1}{1-t_1} = t + t(1-t)\left( \frac{t_2-t_1}{1-t_1} \right) \in [0,1].
$$
Therefore,
$$
\left|v_2 + \tfrac12(t+t'-tt')[v_1,w_1]\right| 
\leq t_0\left|v_2 + \tfrac12[v_1,w_1]\right|
 + (1-t_0)\left|v_2 + \tfrac{t_1}{2}[v_1,w_1]\right|
\lesssim \rho^{3/2} \Delta^{1/2}.
$$
Hence $d(L_{vw}(t),L_w)^4 \lesssim d_{\infty}(L_{vw}(t),g(t))^4 \lesssim \rho^3 \Delta$.
In the case $f = L_v$, 
it is similar and simpler to establish the bound on $d(L_v(t),L_w)$.
This completes the proof of the lemma.
\end{proof}

\section{Singular integrals on 1-regular curves}
\label{SIOSec}

Recall that if $(X,d)$ is a metric space, an $\ha^{1}$-measurable set $E \subset X$ is \emph{
$1$-(Ahlfors)-regular}, if there exists a constant $1\leq C
<\infty$, such that
\begin{displaymath}
C^{-1} r \leq \mathcal{H}^1(B(x,r)\cap E) \leq C r
\end{displaymath}
for all  $x\in E$, and $0<r\leq \mathrm{diam}(E)$. In this section we are going to prove Theorem \ref{siosintro}, which we reformulate in a more precise manner below.
\begin{theorem}
\label{sios}
Let $(\G,d)$ be Carnot group of step $r\geq 2$ equipped with a homogeneous metric $d$. Let $K_d : \G \stm \{0\} \ra [0,\infty)$ be defined by
$$K_d(p)=\frac{d(NH(p),0)^{2r^3}}{d(p,0)^{2r^3+1}},$$
and let $E$ be a $1$-regular set which is contained in a $1$-regular curve.  Then the corresponding truncated singular integrals
$$T^\ve f\,(p)=\int_{E \stm B_{d}(p,\ve)} K_d(q^{-1} \cdot p) f(q) \,d \ha^1(q)$$
are uniformly bounded in $L^2(\ha^1 |_E)$.
\end{theorem}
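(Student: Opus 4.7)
The plan is to reduce the $L^2$-boundedness of the truncations $T^\ve$ to a local Carleson-type condition that can be verified by Theorem~\ref{TSP}. The argument proceeds in four steps.

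First I would check that $K_d$ is a standard Calder\'on--Zygmund kernel. The $-1$-homogeneity and nonnegativity are immediate from the $1$-homogeneity of $d$. Symmetry $K_d(p) = K_d(p^{-1})$ follows from a direct BCH computation showing $\Vert NH(p^{-1})\Vert = \Vert NH(p)\Vert$ (in exponential coordinates, $p^{-1} = -p$, and the bracket contributions to $NH$ survive the inversion only up to sign). The Calder\'on--Zygmund size and H\"older regularity follow from the high vanishing order $2r^3$ of the numerator along the horizontal layer $\{NH = 0\}$ combined with \eqref{compact}.

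The second and central step is the pointwise geometric estimate
$$
K_d(q^{-1}p) \;\lesssim\; \frac{\beta_\Gamma(B(p,Cd(p,q)))^{2r^2}}{d(p,q)}
\qquad \text{for all } p,q \in \Gamma.
$$
Writing $\beta := \beta_\Gamma(B(p,Cd(p,q)))$ and $\rho := d(p,q)$, both $p$ and $q$ lie within $\beta\rho$ of a common horizontal line $L$, so we may decompose $p = p_L e_p$ and $q = q_L e_q$ with $p_L,q_L \in L$ and $\Vert e_p\Vert, \Vert e_q\Vert \lesssim \beta\rho$. Thus $q^{-1}p = e_q^{-1}(q_L^{-1}p_L) e_p$, where the middle factor is horizontal with norm $\lesssim \rho$. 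Expanding via the BCH formula as in the proof of Lemma~\ref{NH}, every nonhorizontal bracket contribution must contain a $V_1$-component of $e_p$ or $e_q$; the dominant contribution sits in $V_r$ and obeys $|NH(q^{-1}p)| \lesssim \beta\rho^r$ in the Euclidean norm. Applying \eqref{compact} then yields $\Vert NH(q^{-1}p)\Vert \lesssim \beta^{1/r}\rho$, and raising to the power $2r^3$ and dividing by $\rho^{2r^3+1}$ gives the claim.

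Third, I would convert the $\mathcal{H}^Q$-Carleson estimate of Theorem~\ref{TSP} into a $1$-dimensional Carleson estimate along $\Gamma$,
$$
\int_\Gamma \int_0^\infty \beta_\Gamma(B(x,t))^{2r^2}\,\frac{dt}{t}\, d\mathcal{H}^1(x) \;\lesssim\; \mathcal{H}^1(\Gamma),
$$
by passing through the discrete inequality \eqref{TSPdiscrete}, covering $E$ at each dyadic scale by balls from the multiresolution and exploiting the $1$-regularity of $E$ and $\Gamma$.

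Finally, combining the previous two steps, for any ball $B$ centered on $E$ the pointwise bound of step two, dyadic decomposition in $d(p,q)$, and the $1$-regularity of $E$ yield
$$
\int_{E\cap B}\int_{E\cap B} K_d(q^{-1}p)\, d\mathcal{H}^1(p)\, d\mathcal{H}^1(q) \;\lesssim\; \int_{E\cap B} \int_0^{2r_B}\beta_\Gamma(B(p,t))^{2r^2}\,\frac{dt}{t}\, d\mathcal{H}^1(p) \;\lesssim\; r_B \;\sim\; \mathcal{H}^1(E\cap B).
$$
Together with the standard CZ properties from step one, this local $T(1)$-type estimate for a nonnegative symmetric kernel yields uniform $L^2(\mathcal{H}^1|_E)$-boundedness of $T^\ve$ via a $T(1)$-theorem for $1$-regular subsets of spaces of homogeneous type, as in \cite{ChoLi}. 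The main obstacle will be step two: the BCH-bracket computation must produce exactly the factor $\beta^{1/r}$, since the matching $2r^3\cdot (1/r) = 2r^2$ is what aligns the exponent of the $\beta$-number with the one provided by Theorem~\ref{TSP}.
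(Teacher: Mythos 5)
Your proposal follows essentially the same route as the paper. Your central pointwise estimate $\Vert NH(q^{-1}p)\Vert \lesssim \beta^{1/r}\,d(p,q)$ is precisely the content of the paper's Lemma~\ref{kernellemma}, and the reduction to a Carleson packing condition that is then verified by Theorem~\ref{TSP} and closed with a $T(1)$ theorem for homogeneous metric measure spaces mirrors the paper's argument (the paper packages the pointwise bound as $T_{(j)}1(x)\lesssim\beta_E(x,2^{1-j})^{2r^2}$ and works with David--Christ dyadic cubes rather than a continuous Carleson integral along $\Gamma$, but these differences are cosmetic).

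Two small technical remarks on your step two. First, the nonvanishing BCH brackets contributing to $NH(q^{-1}p)$ must contain \emph{some} component of $e_p$ or $e_q$, not necessarily a $V_1$-component; the paper's Lemma~\ref{kernellemma} tracks factors $a_i/d^i$ and $c_i/d^i$ from all layers $i$, and it is the bound $|a_i|/d^i \leq d_\infty(a,L)/d_\infty(a,b)$ for each $i$ that delivers the factor of $\beta$. Second, the passage from a Euclidean estimate to $\Vert NH(q^{-1}p)\Vert$ via \eqref{compact} requires dilating by $\delta_{1/\rho}$ first: the bound $|NH(q^{-1}p)|\lesssim \beta\rho^r$ only reflects the $V_r$ layer, whereas after rescaling each layer $V_k$ of $NH(\delta_{1/\rho}(q^{-1}p))$ has Euclidean size $\lesssim \beta$ uniformly, the point sits in a fixed compact set, and \eqref{compact} gives $\Vert NH(\delta_{1/\rho}(q^{-1}p))\Vert^r\lesssim\beta$, hence $\Vert NH(q^{-1}p)\Vert\lesssim\beta^{1/r}\rho$. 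This is exactly how the paper's Lemma~\ref{kernellemma} proceeds, so your plan is sound once these normalizations are inserted.
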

\begin{proof}
The proof follows as in the proof of \cite[Theorem 1.3]{ChoLi} once we have at our disposal Theorem \ref{TSP} and Lemma \ref{kernellemma}. Nevertheless we will provide an outline for the convenience of the reader. To simplify notation we let $\mu = \ha^1|_E$ and $K=K_d$. Since $E$ is $1$-regular there exists some constant $c_{\mu} \in(0,1]$ such that \begin{align*}
  c_{\mu} r \leq \mu(B(x,r)) \leq c_{\mu}^{-1} r, \qquad \forall x \in E, r > 0.
\end{align*}
We first observe that the kernel $K$ is a symmetric $1$-dimensional {\it Calder\'on-Zygmund} (CZ)-kernel, see \cite[Definition 2.6 and Lemma 2.7]{ChoLi}. 
We will use the $T1$-theorem (which we explain more in the following) to prove that the operators $T^\ve$ are uniformly bounded on $L^2(\mu)$. For this reason, we need a system of dyadic-like cubes associated to the set $E$. These systems were introduced by David in \cite{david-wavelets} for regular Euclidian sets and later generalized by Christ \cite{Chr} to any regular set of a geometrically doubling metric space. In particular for the set $E$, there is a constant $c_{d} \in (0,1]$ and a family of partitions $\Delta_{j}$ of $E$, $j \in \Z$, with the following properties;
\begin{itemize}
\item[(D1)] If $k \leq j$, $Q \in \Delta_{j}$ and $Q' \in
\Delta_{k}$, then either $Q \cap Q' = \emptyset$, or $Q \subset
Q'$. 
\item[(D2)] If $Q \in \Delta_{j}$, then $\diam Q \leq 2^{-j}$.
\item[(D3)] Every set $Q \in \Delta_{j}$ contains a set of the form
$B(p_{Q},c_{d}2^{-j}) \cap E$ for some $p_{Q} \in Q$.
\end{itemize}
We will call the sets in $\Delta := \cup \Delta_{j}$ the \emph{dyadic cubes} of $E$.  
For a cube $S \in \Delta$, we define
\begin{align*}
  \Delta(S) := \{Q \in \Delta : Q \subseteq S\}.
\end{align*}
Given a cube $Q \in \Delta$ and $\lambda \geq 1$, we define
\begin{align*}
  \lambda Q := \{x \in E : d(x,Q) \leq (\lambda - 1) \diam(Q)\}.
\end{align*}
It follows from (D2), (D3) and the $1$-regularity of $E$ that if $Q \in \Delta_j$,
$$c_d 2^{-j}\leq \diam(Q) \leq 2^{-j} \mbox { and }c_d c_\mu 2^{-j} \leq \mu(Q) \leq c_\mu^{-1} 2^{-j}.$$

To prove the $L^{2}(\mu)$ boundedness of the operator $T^{\ve}$  it suffices to verify that there exists a uniform bound $C < \infty$ that can depend on $c_{\mu}, c_d$ so that
\begin{align}
  \|T^\ve \chi_S\|_{L^2(S)}^2 \leq C \mu(S), \qquad \forall S \in \Delta, \forall \ve > 0 \label{e:T_eps-bound}
\end{align}
where $L^2(S):=L^2(\mu|_S)$. These conditions suffice by the $T1$ theorem of David and Journ\'e, applied in the homogeneous metric measure space $(E,d,\mu)$, see \cite[Theorem 3.21]{tolsabook}. Notice that since $K$ is symmetric, $(T^{\ve})^{\ast}=T^{\ve}$ where $(T^{\ve})^{\ast}$ is the formal adjoint of $T^{\ve}$, see also \cite[Remark 2.6]{CFOsios}. The statement in Tolsa's book is formulated for Euclidean spaces, but the proof works with minor standard changes in homogeneous metric measure spaces; the details can be found in the honors thesis of Fernando \cite{Fer}. Observe that we may suppose that $E$ is a 1-regular rectifiable curve as taking a subset can only decrease the $L^2(\mu)$-bound of $T^\ve \chi_S$.

We will now decompose our singular integral dyadically. This approach was used in \cite{CFOsios} and \cite{ChoLi} and is inspired by \cite{tolsaplms}. Let $\psi : \R \to \R^+$ be a Lipschitz function so that $\chi_{B(0,1/2)} \leq \psi \leq \chi_{B(0,2)}$.  For any $j \in \Z$ we let $\psi_j : \G \to \R$  such that $\psi_j(z)=\psi(2^j d(z,0))$ and we set $\phi_j := \psi_j - \psi_{j+1}$.  Note that $\phi_j$ is supported on the annulus $B(0,2^{1-j}) \backslash B(0,2^{-2-j})$ and for any $N \in \Z$, $\sum_{n \leq N}=1-\psi_{N+1}$, hence
\begin{align}
  \chi_{\G \backslash B(0,2^{-N})} \leq \sum_{n \leq N} \phi_n \leq \chi_{\G \backslash B(0,2^{-N-2})}. \label{e:partial-phi-sum}
\end{align}
For each $j \in \Z$, we let $K_{(j)} := \phi_j \cdot K$ and we define 
\begin{align*}
  T_{(j)}f(x) = \int  K_{(j)}(y^{-1}x)f(y) ~d\mu(y).
\end{align*}
for nonnegative functions $f \in L^2(\mu)$. For $N \in \Z$  let $S_N = \sum_{n \leq N} T_{(n)}$.  As the kernel $K$ is positive,  \eqref{e:partial-phi-sum} implies the following {\it pointwise} estimates for any nonnegative function $f \in L^2(\mu)$ from
\begin{align*}
  0 \leq T_1^\ve f \leq S_{n} f, \qquad \forall \ve \geq 2^{-n}.
\end{align*}
Thus, to establish the uniform bound \eqref{e:T_eps-bound}, it suffices to show that there exists some absolute constant $C < \infty$ such that
\begin{equation}
\label{snbound}
  \|S_n \chi_S\|_{L^2(S)}^2 \leq C \mu(S), \qquad \forall S \in \Delta, \forall n \in \Z.
\end{equation}
We now fix $S \in \Delta_\ell$ for some $\ell \in \Z$.  We will show that for any $j \in \Z$ and $x \in E$, we have
  \begin{equation}
\label{e:Tj-beta}
    T_{(j)}1(x) \lesssim_{c_{\mu}} \beta_E(x,2^{1-j})^{2r^2}. 
  \end{equation}
In order to prove \eqref{e:Tj-beta} we need the following lemma which was first proven in the case of the Heisenberg group in \cite[Lemma 3.3]{LiSchul2}.
\begin{lemma}
\label{kernellemma}
Let $(\G,d)$ be Carnot group of step $r\geq 2$ equipped with a homogeneous metric $d$. Then
\begin{equation}
\label{kerneleq}
\frac{d(NH(a^{-1}b),0)^{r}}{d(a,b)^{r-1}} 
\lesssim \max \{ d(a,L),d(b,L) \}
\end{equation}
for any $a,b \in \mathbb{G}$ and any horizontal line $L \subset \mathbb{G}$.
\end{lemma}

\begin{proof}
For any $p \in \mathbb{G}$, 
we will write $p = (p_1,\dots,p_r)$
where $p_k \in \mathbb{R}^{v_k}$ and $v_k = \dim V_k$.
As in the previous section, we will utilize the homogeneous norm
$$
\|p\|_{\infty} = \max\{ |p_k|^{1/k} \}_{k=1}^r.
$$
For $x,y \in \G$ we will denote $d_\infty(x,y):=  \|y^{-1}x \|_\infty$. 
Note that $d_{\infty}$ is not a true metric since it does not satisfy the triangle inequality.
Rather, there is a sub-additive constant $C_\infty \geq 1$.
Regardless, it follows that $d_\infty$ is globally equivalent to $d$ 
in the sense of \eqref{quasiconvexity}.
Fix $a,b \in \mathbb{G}$ and a horizontal line $L \subset \mathbb{G}$.
Note that
\begin{equation}
\label{NHcompactbound}
\Vert NH(a^{-1}b) \Vert_\infty
\leq C_{\infty} (\Vert \tilde{\pi}(a^{-1}b) \Vert_\infty + \Vert a^{-1}b \Vert_\infty)
\leq 2 C_{\infty} \Vert a^{-1} b \Vert_\infty = 2C_{\infty}d_\infty(a,b).
\end{equation}

If $d_\infty(a,b) < \max \{ d_\infty(a,L),d_\infty(b,L) \}$,
then 
$$
\frac{d_\infty(NH(a^{-1}b),0)^{r}}{d_\infty(a,b)^{r-1}} 
\leq (2C_{\infty})^r d_\infty(a,b)
< (2C_{\infty})^r  \max \{ d_\infty(a,L),d_\infty(b,L) \}.
$$
Thus we may assume $d_\infty(a,b) \geq \max \{ d_\infty(a,L),d_\infty(b,L) \}$.

Write $d := d_\infty(a,b)$,
and choose $\ell_a,\ell_b \in L$ so that
$
d_\infty(a,L) = d_\infty(a,\ell_a)
$
and
$
d_\infty(b,L) = d_\infty(b,\ell_b)
$.
Without loss of generality,
we may assume that $\ell_a = 0$
so that
$\ell_b = (x,0,\dots,0)$.
We have
$$
\frac{\Vert NH(a^{-1}b) \Vert_\infty^{r}}{d^{r-1}} 
= d \left( \frac{\Vert NH(a^{-1}b) \Vert_\infty}{d} \right)^r
= d \Vert NH(\delta_{1/d}(a^{-1}b)) \Vert_\infty^r
\lesssim d |NH(\delta_{1/d}(a^{-1}b))|.
$$
This last inequality follows from 
\eqref{compact}
with a constant depending only on $\G$ 
since \eqref{NHcompactbound} implies $NH(\delta_{1/d}(a^{-1}b)) \in B_{\infty}(0,2C_{\infty})$
for any choice of $a$ and $b$.
We can write $c = \ell_b^{-1} b$ so that
$
a^{-1}b = a^{-1} \ell_b c
$
and 
$
\Vert c \Vert_\infty = d_\infty(b,L)
$.
This gives
$$
NH(\delta_{1/d}(a^{-1}b))
= \tilde{\pi}(\delta_{1/d}(a^{-1} \ell_b c))^{-1} * \delta_{1/d}(a^{-1} \ell_b c)
= (0,Q)
$$
where $Q$ is a Lie bracket polynomial 
determined by the BCH formula.
As in the proof of Lemma~\ref{NH},
$Q$ is a finite sum of constant multiples of terms of the form
\begin{equation}
\label{brackets3}
[Z_1,[Z_2,\cdots,[Z_{k-2},[Z_{k-1},Z_k]]\cdots]]
\end{equation}
where each $Z_i$ is either $a_i/d^i$, $c_i/d^i$, or $x/d$.
(Again, we are abusing notation
and identifying each $a_i$ and $c_i$ with the associated vector in $V_i \subset \mathfrak{g}$.)
The definition of $\| \cdot \|_{\infty}$ gives
$$
\frac{|a_i|}{d^i}
\leq \left( \frac{d_\infty(a,\ell_a)}{d_\infty(a,b)} \right)^i
=\left( \frac{d_\infty(a,L)}{d_\infty(a,b)} \right)^i
\leq \frac{d_\infty(a,L)}{d_\infty(a,b)} 
$$
since, by assumption, $d_\infty(a,L) \leq d_\infty(a,b)$.
Similarly, $|c_i|/d^i \leq d_\infty(b,L) / d_\infty(a,b)$.
We also have
\begin{equation*}
\begin{split}
C_{\infty}^{-2} |x|  = C_{\infty}^{-2} \Vert \ell_b \Vert_\infty 
\leq \Vert a \Vert_\infty + \Vert a^{-1} b \Vert_\infty + \Vert b^{-1} \ell_b \Vert_\infty = d_\infty(a,L) + d_\infty(a,b) + d_\infty(b,L)
\leq 3 d.\end{split}\end{equation*}
Therefore, $|x|/d \leq 3C_{\infty}^{2}$.
Now, each nested bracket of the form \eqref{brackets3}
must contain at least one term $a_i/d^i$ or $c_i/d^i$
(since, otherwise, we would have $Z_i = x/d$ for each $i$, so the brackets would all vanish).
Since $\max \{ d_\infty(a,L),d_\infty(b,L) \} / d_\infty(a,b) \leq 1$, this gives
$$
\left|[Z_1,\cdots,[Z_{k-1},Z_k]]\cdots] \right|
\leq \prod_{i=1}^k |Z_i|
\lesssim \frac{\max \{ d_\infty(a,L),d_\infty(b,L) \}}{d_\infty(a,b)}.
$$
Since the sum in the BCH formula is finite, we have
$$
\frac{d_\infty(NH(a^{-1}b),0)^{r}}{d_\infty(a,b)^{r-1}} 
\lesssim d_\infty(a,b) |NH(\delta_{1/d}(a^{-1}b))|
\lesssim \max \{ d_\infty(a,L),d_\infty(b,L) \}.
$$
This completes the proof of the lemma.
\end{proof}

Let $A = E \cap A(x,2^{-2-j},2^{1-j})$.  Since $\psi$ is Lipschitz, we have $\phi_j(y^{-1}x) \lesssim 2^{j+2} d(y,x)$. 
Hence
  \begin{equation*}
  \begin{split}
    T_{(j)}1(x) = &\int_E \phi_j(y^{-1}x) K(y^{-1}x) ~d\mu(y) \lesssim 2^{j+2} \int_A \frac{d(NH(y^{-1}x), 0)^{2r^3}}{d(y,x)^{2r^3}} ~d\mu(y)  \\
    &\lesssim \sup_{y \in A} \frac{d(NH(y^{-1}x), 0)^{2r^3}}{d(x,y)^{2r^3}}.
    \end{split}
  \end{equation*}
Observe that, if $y \in A$, it holds that $d(x,y) \geq 2^{-j-2}$. Moreover, there exists a horizontal line $L$ such that  \begin{align*}
    \beta_{\{x,y\}}(x,2^{1-j}) = \frac{\max\{d(x,L),d(y,L)\}}{2^{1-j}} \gtrsim \frac{\max\{d(x,L),d(y,L)\}}{d(x,y)} \overset{\eqref{kerneleq}}{\gtrsim} \frac{d(NH(x^{-1}y),0)^{r}}{d(x,y)^r}.
  \end{align*}
Hence \eqref{e:Tj-beta} follows as $\beta_E(B(x,2^{1-j})) \geq \beta_{\{x,y\}}(B(x,2^{1-j}))$. For any $Q \in \Delta$ we define $$\beta_E(Q):=\beta_E (p_Q, 2c_d^{-1}+1).$$ Note that if $R \in \Delta_j$ for some $j \in \Z$ then  \eqref{e:Tj-beta} implies that for any $\alpha > 0$
 \begin{align}
 \int_R T_{(j)}1(x)^\alpha ~d\mu(x) \lesssim_{c_\mu} \beta_E(R)^{2r^2\alpha} \mu(R). \label{e:Tj-int-beta}
 \end{align}
 
 Using \eqref{e:Tj-beta} and \eqref{e:Tj-int-beta} and arguing exactly as in \cite[pp 1416-1417]{ChoLi} we deduce that 
\begin{equation}
\label{snbound2}
\|S_n\chi_S\|_{L^2(S)}^2 \lesssim_{c_\mu} \sum_{Q \in \Delta(S^*)} \beta(Q)^{2r^2}\mu(Q) 
\end{equation}
where $S^*$ is the unique cube in $\Delta_{\ell-2}$ such that $S \subset S^*$. 
Using Theorem \ref{TSP} it is not difficult to show (see e.g. the discussion in \cite[Proposition 3.1]{ChoLi}) that there exists an absolute constant $C:=C(c_\mu)>0$ such that, for any $P \in \Delta$, we have
 \begin{align}
    \sum_{Q \in \Delta(P)} \beta_E(Q)^{2r^2} \mu(Q) \leq C \mu(P). \label{e:tsp}
 \end{align}
Now \eqref{snbound} follows by  \eqref{snbound2}, \eqref{e:tsp} and the $1$-regularity of $\mu$. The proof is complete.
\end{proof}

\bibliographystyle{acm}
\bibliography{zimbib}

\end{document}